\documentclass[12pt]{amsart}
\usepackage{geometry}\geometry{left=2.7cm,right=2.7cm,top=3.1cm,bottom=2.5cm}
\usepackage{wasysym,amssymb,amsthm,cite,color,eufrak,enumitem,epsfig,float,indentfirst,graphicx}
\usepackage[bookmarksnumbered, colorlinks, linktocpage, plainpages]{hyperref}
\newtheorem{theorem}{Theorem}[section]

\newtheorem{lemma}[theorem]{Lemma}
\newtheorem{proposition}[theorem]{Proposition}
\theoremstyle{definition}

\newtheorem{question}{Question}

\theoremstyle{remark}
\newtheorem{remark}[theorem]{Remark}

\numberwithin{equation}{section}

\begin{document}

\title[Complex Geodesics and Complex Monge-Amp\`{e}re Equations]
{Complex geodesics and complex Monge-Amp\`{e}re equations with boundary singularity}
\date{}

\author[Xiaojun Huang]{Xiaojun Huang$^{{\rm \lowercase{a}}}$}
\thanks{$^a$Partially supported by NSF grants DMS-1665412 and DMS-2000050.}

\author[Xieping Wang]{Xieping Wang$^{{\rm \lowercase{b}}}$}
\thanks{$^b$Partially supported by NSFC grants 11771412, 12001513, NSF of Anhui Province grant 2008085QA18, and CPSF grant 2017M620072.}

\address{Department of Mathematics, Rutgers University, New Brunswick, NJ 08903, USA}
\email{huangx@math.rutgers.edu}

\address{School of Mathematical Sciences, University of Science and Technology of China, Hefei 230026, Anhui, People's Republic of China}
\email{xpwang008@ustc.edu.cn}

\begin{abstract}
We study complex geodesics and complex Monge-Amp\`{e}re equations on boun-\ ded strongly linearly convex domains in $\mathbb C^n$. More specifically, we prove the uniqueness of complex geodesics with  prescribed boundary value and direction in such a domain, when its boundary is of minimal regularity. The existence of such complex geodesics was proved by the first author in the early 1990s, but the uniqueness was left open. Based on the existence and the uniqueness proved here, as well as  other previously obtained results, we solve a homogeneous complex Monge-Amp\`{e}re equation with prescribed boundary singularity, which was first considered by Bracci et al. on smoothly bounded strongly convex domains in $\mathbb C^n$.

\bigskip
\noindent{{\sc Mathematics Subject Classification} (2020): 32F17, 32F45, 32H12, 32U35, 32W20, 35J96}

\smallskip
\noindent{{\sc Keywords}: Strongly linearly convex domains, complex geodesics, complex Monge-Amp\`{e}re equations}

\end{abstract}
\maketitle

\section{Introduction}
Since the celebrated work of Bedford-Taylor \cite{Bedford76, Bedford82} and Yau \cite{Yau}, complex Monge-Amp\`{e}re equations have been an important part in the study of pluripotential theory, several complex variables and complex geometry. In this paper, we are interested in the theory of complex geodesics and its connections with homogeneous complex Monge-Amp\`{e}re equations with prescribed singularity. A first major breakthrough on this subject was made by Lempert in his famous work \cite{Lempert81}. We first prove a boundary uniqueness result for complex geodesics of a bounded strongly linearly convex domain with $C^3$-smooth boundary. Using this result as a  basic tool, we construct for such a domain a foliation by complex geodesics initiated from a fixed boundary point. Such a foliation is then used to construct a pluricomplex Poisson kernel which solves a homogeneous complex Monge-Amp\`{e}re equation with prescribed
boundary singularity. This kernel reduces to the classical Poisson kernel when the domain is the open unit disc in the complex plane.

To start with, we recall that a domain $\Omega\subset {\mathbb C}^n$ with $n>1$ is called  {\it strongly linearly convex} if it has a $C^2$-smooth boundary and admits a $C^2$-defining function $r\!:\mathbb C^n\to\mathbb R$ whose real Hessian is positive definite on the complex tangent space of $\partial\Omega$, i.e.,
$$
\sum_{j,\, k=1}^n\frac{\partial^2 r}{\partial z_j\partial\overline
z_k}(p)v_j\overline{v}_k
>\left|\sum_{j,\, k=1}^n\frac{\partial^2 r}{\partial z_j\partial z_k}(p)v_jv_k\right|
$$
for all $p\in\partial\Omega$ and  non-zero $v=(v_1,\ldots,v_n)\in T_p^{1,\, 0}\partial\Omega$; see, e.g., \cite{Andersson04, HormanderConvexity}. Strong linear convexity is a natural notion of convexity in several complex variables,  which is weaker than the usual strong convexity but stronger than strong pseudoconvexity. It is also known that there are  bounded strongly linearly convex domains with real analytic boundary, which are not biholomorphic to convex ones; see \cite{Pflug-Zwonek} and also \cite{Jarnicki-Pflugbook}.

Next, we recall  briefly the definitions of the Kobayashi-Royden metric and the Kobayashi distance; see \cite{Abatebook,Kobayashibook,Jarnicki-Pflugbook} and the references therein for a complete insight. Let $\Delta\subset\mathbb C$ be the open unit disc. The {\it Kobayashi-Royden metric} $\kappa_{\Omega}$ on a domain $\Omega\subset\mathbb C^n$ is  the pseudo-Finsler metric defined by
$$
\kappa_{\Omega}(z, v):=\inf\big\{\lambda>0\,|\;\exists \;\varphi\in \mathcal{O}(\Delta,\, \Omega)\!: \varphi(0)=z,\, \varphi'(0)=\lambda^{-1}v \big\}, \quad (z, v)\in \Omega\times\mathbb C^n,
$$
where $\mathcal{O}(\Delta,\, \Omega)$ denotes the set of holomorphic mappings from $\Delta$ to $\Omega$.
The {\it Kobayashi distance} on $\Omega$ is then defined by
$$
k_{\Omega}(z, w)=\inf_{\gamma\in \Gamma}\int_{0}^{1}\kappa_{\Omega}(\gamma(t), \,\gamma'(t))dt,
\quad (z, w)\in \Omega\times \Omega,
$$
where $\Gamma$ is the set of piecewise $C^1$-smooth curves $\gamma\!:[0, 1] \rightarrow \Omega$ such that $\gamma(0)=z$ and $\gamma(1)=w$. For the open unit disc  $\Delta\subset\mathbb C$, $k_{\Delta}$ coincides with the classical \textit{Poincar\'{e} distance}, i.e.,
$$
k_{\Delta} (\zeta_1, \zeta_2)=\tanh^{-1}\bigg|\frac{\zeta_1-\zeta_2}{1-\zeta_1\overline{\zeta}_2}\bigg|,
\quad (\zeta_1, \zeta_2)\in \Delta\times\Delta.
$$
A holomorphic mapping $\varphi\!:\Delta\to\Omega$ is called a {\it complex geodesic} of $\Omega$ in the sense of Vesentini \cite{Vesentini81}, if it is an isometry between $k_{\Delta}$ and $k_{\Omega}$, i.e.,
\begin{equation*}\label{isometry}
k_{\Omega}(\varphi(\zeta_1),\,\varphi(\zeta_2))=k_{\Delta}(\zeta_1, \zeta_2)
\end{equation*}
for all $\zeta_1$, $\zeta_2\in\Delta$.

The existence of complex geodesics with prescribed data is a very subtle problem.  In his two  important papers \cite{Lempert81, Lempert84}, Lempert addressed this problem for strongly (linearly) convex domains in $\mathbb C^n$ by a rather involved deformation argument; see also \cite{Poletsky83} for a related  work for
extremal mappings on more general pseudoconvex domains. Lempert proved that complex geodesics exist in great abundance on bounded strongly linearly convex domains and enjoy certain nice properties. To be more specific, let $\Omega\subset\mathbb C^n\, (n>1)$ be a bounded strongly linearly convex domain with $C^{m,\,\alpha}$-smooth boundary, where $m\ge 2$ and $\alpha\in(0, 1)$. Then every complex geodesic $\varphi$ of $\Omega$ is a proper holomorphic embedding of $\Delta$ into $\Omega$, and is $C^{m-1,\,\alpha}$-smooth up to the boundary. And there exists a holomorphic mapping $\varphi^{\ast}\!:\Delta\to\mathbb C^n$, also
$C^{m-1,\,\alpha}$-smooth up to the boundary, such that
$$
\varphi^{\ast}|_{\partial\Delta}(\zeta)=\zeta\mu(\zeta)\overline{\nu\circ\varphi(\zeta)},
$$
where $0<\mu\in C^{m-1,\,\alpha}(\partial\Delta)$ and $\nu$ denotes the unit outward normal vector field of $\partial\Omega$. Such a mapping $\varphi^{\ast}$ is unique up to a positive constant multiple, and can be normalized so that $\langle\varphi', \, \overline{\varphi^{\ast}}\rangle=1$ on $\overline{\Delta}$,
where $\langle\;,\,\rangle$ denotes the standard Hermitian inner product on $\mathbb C^n$, i.e.,
$$
\langle z,  w\rangle:=\sum_{j=1}^{n}z_j\overline{w}_j
$$
for $z=(z_1,\ldots,z_n)$, $w=(w_1,\ldots,w_n)\in\mathbb C^n$. The mapping $\varphi^{\ast}$ with such a normalization condition is  usually called the {\it dual mapping} of $\varphi$.  Lempert also proved, among other things, that for every $z\in \Omega$ and $v\in \mathbb C^n\setminus\{0\}$  there is a unique complex geodesic $\varphi$ of $\Omega$ such that $\varphi(0)=z$ and $\varphi'(0)=v/\kappa_{\Omega}(z, v)$. Similar to this interior existence and uniqueness result, we prove the following boundary analogue, which is the first main result of this paper:

\begin{theorem}\label{mainthm1}
Let $\Omega\subset\mathbb C^n\, (n>1)$ be a bounded strongly linearly convex domain with $C^3$-smooth boundary. Let $p\in\partial\Omega$ and $\nu_p$ be the unit outward normal to $\partial\Omega$ at $p$. Then for every $v\in\mathbb C^n\setminus T_p^{1,\,0}\partial\Omega$  with $\langle v, \nu_p\rangle>0$, there is a unique complex geodesic $\varphi$ of $\Omega$ {\rm(}up to a parabolic automorphism of $\Delta$ fixing $1${\rm)} such that $\varphi(1)=p$ and $\varphi'(1)=v$. Moreover, $\varphi$ is uniquely determined by the additional {\rm(}and always realizable{\rm)} condition that
$$
\left.\frac{d}{d\theta}\right|_{\theta=0}|\varphi^{\ast}(e^{i\theta})|=0,
$$
where $\varphi^{\ast}$ is the  dual mapping  of $\varphi$.
\end{theorem}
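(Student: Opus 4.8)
The plan is the following. The \emph{existence} of a complex geodesic $\varphi$ with $\varphi(1)=p$ and $\varphi'(1)=v$ is precisely the boundary existence theorem of the first author from the early 1990s, so the new content is the uniqueness up to a parabolic automorphism fixing $1$, together with the normalization in the ``Moreover'' part. Two preliminary facts, valid for every such $\varphi$: by Lempert's regularity theory in its $C^3$ version, $\varphi$ and its dual $\varphi^{\ast}$ extend $C^2$ to $\overline{\Delta}$, $\varphi$ is a proper holomorphic embedding of $\overline{\Delta}$ with nowhere vanishing derivative and possesses a holomorphic left inverse $\pi$ with $\pi\circ\varphi=\mathrm{id}_{\Delta}$, continuous up to $p$; and, evaluating $\langle\varphi',\overline{\varphi^{\ast}}\rangle\equiv 1$ at $\zeta=1$ and using that on $\partial\Delta$ one has $|\varphi^{\ast}(1)|=\mu(1)$ and $\varphi^{\ast}(1)=\mu(1)\overline{\nu_p}$, one obtains $\mu(1)=1/\langle v,\nu_p\rangle$, hence
\[
\varphi^{\ast}(1)=\frac{\overline{\nu_p}}{\langle v,\nu_p\rangle}
\]
is \emph{completely determined by} $(p,v)$.

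For the uniqueness it suffices to show that two complex geodesics $\varphi_{1},\varphi_{2}$ with $\varphi_{j}(1)=p$ and $\varphi_{j}'(1)=v$ have the same image in $\Omega$: then $\varphi_{1}^{-1}\circ\varphi_{2}$ is an automorphism of $\Delta$, it fixes $1$ (since $\varphi_{1}(1)=\varphi_{2}(1)=p$ and $\varphi_{1}^{-1}$ is continuous up to $p$), and its derivative at $1$ equals $1$ (since $\varphi_{1}'(1)=\varphi_{2}'(1)=v$ and both are tangent to the common image at $p$), so it is a parabolic automorphism fixing $1$ or the identity. To prove the images coincide I would work inside the Lempert--Huang Riemann--Hilbert framework: a complex geodesic together with its dual solves a nonlinear stationarity problem on $\Delta$, and prescribing $\varphi(1)=p$, $\varphi'(1)=v$ fixes the germ of $(\varphi,\varphi^{\ast})$ at $\zeta=1$ up to the indicated data (including $\varphi^{\ast}(1)$, by the computation above). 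Linearizing at a solution gives a linear Riemann--Hilbert problem whose kernel, once this germ at $\zeta=1$ is fixed, is exactly the one-real-parameter family of infinitesimal parabolic automorphisms, and which is onto; this yields local uniqueness up to a parabolic automorphism. A degree/continuity argument then globalizes it: the map ``complex geodesic through $p$, modulo parabolic automorphisms'' $\longmapsto$ $\varphi'(1)$ is, by the local analysis, a proper local homeomorphism onto the cone $\{v:\langle v,\nu_p\rangle>0\}\cong\mathbb{R}_{>0}\times\mathbb{C}^{n-1}$ (properness because the boundary dilation of the geodesics stays controlled while $\langle v,\nu_p\rangle$ does), and, the cone being connected and simply connected and the analogous map being a bijection for a ball — to which $\Omega$ can be homotoped through strongly linearly convex domains keeping $p$ on the boundary, where everything is explicit — it is a global bijection. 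Equivalently one can run Lempert's auxiliary-function argument adapted to the boundary: the holomorphic function $\zeta\mapsto\langle\varphi_{2}(\zeta)-\varphi_{1}(\zeta),\,\overline{\varphi_{1}^{\ast}(\zeta)-\varphi_{2}^{\ast}(\zeta)}\rangle$ vanishes to order $\ge 3$ at $\zeta=1$ (the $1$-jets of $\varphi_1,\varphi_2$ agree there and $\varphi_{1}^{\ast}(1)=\varphi_{2}^{\ast}(1)$), while on $\partial\Delta$ its real part, after multiplication by $\overline{\zeta}$, equals $\mu_{1}\,\mathrm{Re}\langle\varphi_{2}-\varphi_{1},\nu(\varphi_{1})\rangle+\mu_{2}\,\mathrm{Re}\langle\varphi_{1}-\varphi_{2},\nu(\varphi_{2})\rangle$, whose sign is controlled by the (linear) convexity of $\Omega$ and which, by strong linear convexity, vanishes only where $\varphi_{1}(\zeta)=\varphi_{2}(\zeta)$; an argument-principle/winding count (boundary zeros counted with half their order) then forces $\varphi_{2}$ to be a parabolic reparametrization of $\varphi_{1}$. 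I expect the hard part to be exactly this analysis at the degenerate point $\zeta=1$, where the real parts of the lowest-order Taylor coefficients of the auxiliary function (equivalently the lowest symbols of the linearized operator) are automatically forced to vanish, so the estimate has to be pushed to higher order — and carrying it out for domains that are strongly linearly convex but need not be convex, with only $C^3$ boundary regularity.

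For the ``Moreover'' statement I would first record the transformation law of the dual mapping under a reparametrization $\psi\in\mathrm{Aut}(\Delta)$: since the two candidate dual mappings of $\varphi\circ\psi$ are pointwise parallel on $\partial\Delta$, hence on $\Delta$, and the normalization $\langle(\varphi\circ\psi)',\overline{(\varphi\circ\psi)^{\ast}}\rangle\equiv 1$ pins down the scalar, one obtains $(\varphi\circ\psi)^{\ast}=(\varphi^{\ast}\circ\psi)/\psi'$. Writing $\psi(e^{i\theta})=e^{i\Theta(\theta)}$ this gives $|(\varphi\circ\psi)^{\ast}(e^{i\theta})|=|\varphi^{\ast}(e^{i\Theta(\theta)})|/\Theta'(\theta)$, and since a parabolic automorphism fixing $1$ has $\Theta(0)=0$ and $\Theta'(0)=1$,
\[
\left.\frac{d}{d\theta}\right|_{\theta=0}\big|(\varphi\circ\psi)^{\ast}(e^{i\theta})\big|=\left.\frac{d}{d\theta}\right|_{\theta=0}\big|\varphi^{\ast}(e^{i\theta})\big|-|\varphi^{\ast}(1)|\,\Theta''(0).
\]
Parametrizing the parabolic automorphisms fixing $1$ by the translations $w\mapsto w+b$ ($b\in\mathbb{R}$) of the upper half-plane, one computes $\Theta''(0)=b$, which runs over all of $\mathbb{R}$; hence there is exactly one $b$, namely $b=\frac{d}{d\theta}\big|_{\theta=0}\log|\varphi^{\ast}(e^{i\theta})|$, for which $\varphi\circ\psi_{b}$ satisfies $\frac{d}{d\theta}\big|_{\theta=0}\big|(\varphi\circ\psi_{b})^{\ast}(e^{i\theta})\big|=0$. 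This makes the condition always realizable, selects a unique representative in each parabolic orbit, and together with the uniqueness up to parabolic automorphism completes the proof.
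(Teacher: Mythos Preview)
Your handling of the ``Moreover'' part is essentially correct and matches the paper's argument: the transformation law $(\varphi\circ\psi)^{\ast}=(\varphi^{\ast}\circ\psi)/\psi'$ and the resulting affine dependence of $\tfrac{d}{d\theta}|_{\theta=0}|(\varphi\circ\psi)^{\ast}(e^{i\theta})|$ on the parabolic parameter is exactly what the paper computes (with the explicit family $\sigma_t$ in place of your $\psi_b$). One small correction: with $C^3$ boundary the geodesics and duals are only $C^{1,\alpha}$ up to $\overline{\Delta}$ for every $\alpha<1$, not $C^2$; this matters below.

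The genuine gap is in the uniqueness for the strongly \emph{linearly} convex case. Your second alternative---the Lempert-type auxiliary function---relies on the sign of $\mu_1\,\mathrm{Re}\langle\varphi_2-\varphi_1,\nu(\varphi_1)\rangle+\mu_2\,\mathrm{Re}\langle\varphi_1-\varphi_2,\nu(\varphi_2)\rangle$ on $\partial\Delta$. That sign is a consequence of \emph{strong convexity} (boundary points lie on one side of the real tangent hyperplane), and it simply fails for domains that are strongly linearly convex but not convex; linear convexity controls only the complex tangent hyperplane. The paper makes exactly this distinction: the convex case is dispatched in a few lines by your kind of argument (after first using a parabolic reparametrization to match $(\varphi^{\ast})'(1)$ as well---a normalization you omit, and without which your ``order $\ge 3$'' claim does not hold at the available $C^{1,\alpha}$ regularity), while the linearly convex case requires an entirely different mechanism. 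There the paper composes each geodesic with the Lempert left inverse of the other, proves via delicate $C^{1,\alpha}$ estimates on these left inverses (Proposition~\ref{prop:regularity of leftinv}) that $(\varrho\circ\widetilde{\varphi}+\widetilde{\varrho}\circ\varphi)''(\zeta)=o(|\zeta-1|)$ non-tangentially, and then invokes a quantitative Burns--Krantz lemma (Lemma~\ref{thm:effectiveness of BK}) to force $\tfrac12(\varrho\circ\widetilde{\varphi}+\widetilde{\varrho}\circ\varphi)=\mathrm{Id}_{\Delta}$, which is incompatible with the two closed discs meeting only at $p$. None of this is visible in your outline; you correctly flag the linearly convex case as ``the hard part'' but do not supply an argument that survives the loss of convexity.

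Your first alternative (linearized Riemann--Hilbert plus a degree/homotopy argument) is not obviously wrong, but as written it is a programme rather than a proof: you would need to establish that the solution-to-data map is a proper local homeomorphism at $C^{1,\alpha}$ regularity, and that $\Omega$ can be deformed to a ball through strongly linearly convex domains keeping $p$ on the boundary---neither of which is routine. The paper does not take this route.
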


\begin{remark}\label{rmk:existence of geodesics}
The requirement in Theorem \ref{mainthm1} that $v\in\mathbb C^n\setminus T_p^{1,\,0}\partial\Omega$ and $\langle v, \nu_p\rangle>0$ is also a necessary condition for the existence of a complex geodesic $\varphi$ of $\Omega$ with prescribed value $p$ and derivative $v$ at $1$. The reason is the following: Since each such  $\varphi$ is proper and belongs to $C^1(\overline{\Delta})$, it follows that $\varphi(\partial\Delta)\subset\partial\Omega$ and thus $d\varphi_1(T_1\partial\Delta)\subset T_p\partial\Omega$, i.e., $iv\in T_p\partial\Omega$.  Note also that $\Omega$ is strongly pseudoconvex,  we can take a $C^2$-defining function $r$ for $\Omega$ which is strictly plurisubharmonic on some neighborhood of $\overline{\Omega}$. Then the classical Hopf lemma applied to $r\circ\varphi$ yields that $dr_p(\varphi'(1))>0$, i.e., ${\rm Re}\langle v, \nu_p\rangle>0$. Therefore, we conclude that
$\langle v, \nu_p\rangle$ is a positive number, as required.
\end{remark}

When $\Omega$ has a $C^{14}$-smooth boundary, the first part of Theorem \ref{mainthm1} was proved by Chang-Hu-Lee \cite{Chang-Hu-Lee88} by generalizing Lempert's deformation theory (see \cite{Lempert81}) to the boundary
via the Chern-Moser-Vitushkin  normal form theory. However, when $\Omega$ has only a $C^3$-smooth boundary, the situation is much more subtle. In \cite{Huang-Pisa94}, the first author established the existence part of Theorem \ref{mainthm1} for bounded strongly convex domains with $C^{3}$-smooth boundary. His proof works equally well for the strongly linearly convex case, in view of the work of Lempert \cite{Lempert84} and Chang-Hu-Lee \cite{Chang-Hu-Lee88}. In other words, the existence part of Theorem \ref{mainthm1} was
essentially known in \cite{Huang-Pisa94}. This was done by establishing a non-degeneracy property for extremal mappings (w.r.t. the Kobayashi-Royden metric) of bounded strongly pseudoconvex domains in $\mathbb C^n$ with $C^3$-smooth boundary, whose proof also indicates that the uniqueness part of Theorem \ref{mainthm1} holds for complex geodesics with direction almost tangent to $\partial\Omega$ (under the slightly stronger assumption that $\partial\Omega$ is $C^{3,\, \alpha}$-smooth); see
\cite[Lemma 3]{Huang-Pisa94} (and its proof) for details. The main contribution of this paper to Theorem \ref{mainthm1} is to provide a proof of the uniqueness part {\it in full generality}, which has been left open since \cite{Huang-Pisa94}.

Theorem \ref{mainthm1} has important applications in solving degenerate complex Monge-Amp\`{e}re equations with prescribed boundary singularity. Indeed, it can be applied to construct for every bounded strongly linearly convex domain with $C^3$-smooth boundary a foliation with complex geodesic discs (namely, the image of complex geodesics) initiated from a fixed boundary point as its holomorphic leaves, or equivalently, a so-called boundary spherical representation. Roughly speaking, for every bounded strongly linearly convex domain $\Omega\subset\mathbb C^n\, (n>1)$ with $C^3$-smooth boundary and $p\in \partial\Omega$, we can define a special homeomorphism  between its closure $\overline{\Omega}$ and the closed unit ball $\overline{\mathbb B}^n\subset {\mathbb C}^n$, which maps holomorphically each complex geodesic disc of $\Omega$ through $p$
to a complex geodesic disc of ${\mathbb B}^n$ through $\nu_p$, and preserves the corresponding horospheres and non-tangential approach regions; see Sections \ref{subsect:spherical rep} and \ref{sect:CMA equations} for more details. By means of such a boundary spherical representation, we can solve the following homogeneous complex Monge-Amp\`{e}re equation:

\begin{theorem}\label{mainthm2}
Let $\Omega\subset\mathbb C^n\, (n>1)$ be a bounded strongly linearly convex domain with $C^3$-smooth boundary, and let $p\in\partial\Omega$. Then the complex Monge-Amp\`{e}re equation
\begin{equation}\label{MA-bp}
\begin{cases}
u\in {\rm Psh}(\Omega)\cap
L_{{\rm loc}}^{\infty}(\Omega), \\
(dd^c u)^n=0 \quad \quad  \!\quad\ \ {\rm on}\,\ \Omega, \\
u<0\qquad \qquad\quad   \!\quad\ {\rm on}\,\ \Omega, \\
\lim_{z\to x}u(z)=0  \!\quad\quad  {\rm for}\,\ x\in\partial\Omega\setminus\{p\}, \\
u(z)\approx -|z-p|^{-1} \:\:\,\,\, {\rm as} \ \ z\to p\, \ {\rm
nontangentially}
\end{cases}
\end{equation}
admits a solution $P_{\Omega,\, p}\in C(\overline{\Omega}\setminus\{p\})$ whose sub-level sets are
horospheres of $\Omega$ with center $p$. Here the last condition in \eqref{MA-bp} requires that for every $\beta>1$, there exists a constant $C_{\beta}>1$ such that
$$
C_{\beta}^{-1} <-u(z)|z-p|<C_{\beta}
$$
for all $z\in \Gamma_{\beta}(p)$ sufficiently close to $p$, where
\begin{equation}\label{defn:nontangential}
\Gamma_{\beta}(p):=\big\{z\in\Omega\!: |z-p|<\beta\, {\rm
dist}(z,\partial\Omega)\big\}.
\end{equation}
\end{theorem}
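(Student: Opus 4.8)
The strategy is to construct $P_{\Omega,\,p}$ as the \emph{pluricomplex Poisson kernel} of $\Omega$ with pole $p$, built from the foliation of $\Omega$ by the complex geodesic discs issuing from $p$, thereby reducing everything to the model case of the unit ball. Identify $\nu_p$ with $e_1=(1,0,\ldots,0)$ by a unitary map. On $\mathbb B^n$ the sought kernel is the explicit function
\[
P_{\mathbb B^n,\,e_1}(z)=-\frac{1-|z|^2}{|1-z_1|^2},
\]
which is negative and plurisubharmonic, whose restriction to each complex geodesic of $\mathbb B^n$ through $e_1$ is a positive multiple of the Poisson kernel of $\Delta$ at $1$, whose sub-level sets are the standard horospheres centered at $e_1$, which satisfies $P_{\mathbb B^n,\,e_1}(z)\approx-|z-e_1|^{-1}$ as $z\to e_1$ non-tangentially, and which obeys $(dd^cP_{\mathbb B^n,\,e_1})^n=0$; in fact $P_{\mathbb B^n,\,e_1}=2\lim_{\lambda\uparrow1}(1-\lambda^2)^{-1}g_{\mathbb B^n}(\cdot,\,\lambda e_1)$, where $g_\Omega(\cdot,\,w)=\log\tanh k_\Omega(\cdot,\,w)$ is the pluricomplex Green function of $\Omega$ with pole at $w$ (Lempert \cite{Lempert81,Lempert84}). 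The entire point of Theorem \ref{mainthm1} in this context is that it makes the analogous geometric objects available on $\Omega$ under the sole hypothesis that $\partial\Omega$ is $C^3$.

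We would first construct the foliation and the associated \emph{boundary spherical representation}. For each $v$ in the cone $\mathcal C_p=\{v\in\mathbb C^n\!:\langle v,\nu_p\rangle\in(0,\infty)\}$, Theorem \ref{mainthm1} provides a unique complex geodesic $\varphi_v$ with $\varphi_v(1)=p$, $\varphi_v'(1)=v$ and $\left.\frac{d}{d\theta}\right|_{\theta=0}|\varphi_v^{\ast}(e^{i\theta})|=0$; reparametrizing by automorphisms of $\Delta$ fixing $1$ and invoking Theorem \ref{mainthm1} once more shows $\varphi_v(\Delta)=\varphi_{tv}(\Delta)$ for every $t>0$, so the leaves are parametrized by $\mathcal C_p/\mathbb R_{>0}$, which the above unitary map identifies with the analogous parameter space for the complex geodesics of $\mathbb B^n$ through $e_1$. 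One then shows these discs are pairwise disjoint in $\Omega$ and cover $\Omega$: this is the interior counterpart of the uniqueness in Theorem \ref{mainthm1}, obtained by reparametrizing two such geodesics through a common interior point and combining Lempert's interior uniqueness of complex geodesics (recalled above) with the uniqueness in Theorem \ref{mainthm1} and Abate's theory of horospheres \cite{Abatebook}. Assembling the discs with the common endpoint $p$ yields a map $F_{\Omega,\,p}\!:\overline\Omega\to\overline{\mathbb B}^n$, with $F_{\Omega,\,p}(p)=e_1$, carrying each leaf $\varphi_v(\overline\Delta)$ biholomorphically onto the corresponding complex geodesic of $\mathbb B^n$ through $e_1$. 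Proving that $F_{\Omega,\,p}$ is a homeomorphism is the first technical core: continuity up to $\partial\Omega$ rests on a normal-families argument -- a locally uniform limit of complex geodesics is again a complex geodesic by the non-degeneracy established in \cite{Huang-Pisa94}, and it is the correct one by the uniqueness in Theorem \ref{mainthm1} -- combined with Lempert's uniform $C^1$ estimates up to $\partial\Delta$ (available since a $C^3$ boundary is locally of class $C^{2,\,\alpha}$ for every $\alpha\in(0,1)$), while bijectivity is the disjointness and covering above. Along the way one records that $F_{\Omega,\,p}$ maps the horospheres and the non-tangential approach regions of $\mathbb B^n$ at $e_1$ onto those of $\Omega$ at $p$.

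We would then set $P_{\Omega,\,p}:=P_{\mathbb B^n,\,e_1}\circ F_{\Omega,\,p}$; equivalently, on $\varphi_v(\Delta)$ one has $P_{\Omega,\,p}(\varphi_v(\zeta))=-c_v(1-|\sigma_v(\zeta)|^2)|1-\sigma_v(\zeta)|^{-2}$ for the appropriate change of parameter $\sigma_v\in{\rm Aut}(\Delta)$ and weight $c_v>0$, so that $P_{\Omega,\,p}$ is harmonic on every leaf. Then $P_{\Omega,\,p}<0$ on $\Omega$, $P_{\Omega,\,p}\in C(\overline\Omega\setminus\{p\})$ (in particular $P_{\Omega,\,p}\in{\rm Psh}(\Omega)\cap L_{{\rm loc}}^{\infty}(\Omega)$, once plurisubharmonicity is known), $\lim_{z\to x}P_{\Omega,\,p}(z)=0$ for $x\in\partial\Omega\setminus\{p\}$, and the statement that its sub-level sets are horospheres of $\Omega$ with center $p$ all follow from the corresponding properties of $P_{\mathbb B^n,\,e_1}$ and the mapping properties of $F_{\Omega,\,p}$ recorded above. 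For the prescribed singularity one uses that, each $\varphi_v$ being $C^1$ up to $\partial\Delta$ with $\varphi_v'(1)=v$, the inverse of $F_{\Omega,\,p}$ extends to a $C^1$ map near $e_1$ with $|F_{\Omega,\,p}^{-1}(\xi)-p|\asymp|\xi-e_1|$ for $\xi$ approaching $e_1$ non-tangentially; since $F_{\Omega,\,p}$ carries $\Gamma_\beta(p)$ into a non-tangential region of $\mathbb B^n$ at $e_1$, the bound $P_{\Omega,\,p}(z)\approx-|z-p|^{-1}$ on $\Gamma_\beta(p)$ reduces to the one on $\mathbb B^n$.

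The remaining point -- and, we expect, the harder one -- is plurisubharmonicity together with maximality, $(dd^cP_{\Omega,\,p})^n=0$, since the mere homeomorphism $F_{\Omega,\,p}$ preserves neither. The plan is to mimic the ball identity above and represent
\[
P_{\Omega,\,p}(z)=c\,\lim_{w\to p}e^{2k_\Omega(z_0,\,w)}\,g_\Omega(z,\,w)
\]
for a fixed base point $z_0\in\Omega$. Using $g_\Omega=\log\tanh k_\Omega$ and $\log\tanh t\sim-2\,e^{-2t}$ as $t\to\infty$, the right-hand side equals $-c'\,e^{-2\ell_p(z)}$, where $\ell_p(z)=\lim_{w\to p}\big(k_\Omega(z,w)-k_\Omega(z_0,w)\big)$ is the Kobayashi horofunction of $\Omega$ at $p$, whose sub-level sets are precisely the horospheres -- this reconciles the present description of $P_{\Omega,\,p}$ with the foliated one. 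The substantive analytic content here is to prove that this limit exists locally uniformly on $\Omega$ (equivalently, that $\ell_p$ is well defined with locally uniform convergence), and it is exactly at this juncture that the foliation and the uniqueness in Theorem \ref{mainthm1} enter. Granting it, and noting that each $g_\Omega(\cdot,\,w)$ is negative, belongs to ${\rm Psh}(\Omega)\cap C(\Omega)$, and is maximal on $\Omega\setminus\{w\}$ while $w$ escapes to $\partial\Omega$, a locally uniform limit of the positive multiples $e^{2k_\Omega(z_0,\,w)}g_\Omega(\cdot,\,w)$ is negative, plurisubharmonic, and maximal on all of $\Omega$ by the Bedford-Taylor theory \cite{Bedford82}; it is not identically zero, again by the non-degeneracy of \cite{Huang-Pisa94}, so $(dd^cP_{\Omega,\,p})^n=0$, completing the verification of \eqref{MA-bp}. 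Alternatively one may identify $P_{\Omega,\,p}$ with the Perron-Bremermann envelope of the negative plurisubharmonic functions on $\Omega$ that are dominated by it near $\partial\Omega\setminus\{p\}$ and carry the prescribed singularity at $p$, which yields plurisubharmonicity and maximality directly, the geodesic foliation serving as the Monge-Amp\`{e}re foliation along which the envelope is harmonic. Thus the two genuinely delicate ingredients are the homeomorphism and boundary regularity of the spherical representation in the $C^3$ category, and the local-uniform convergence (equivalently, the envelope identification) that promotes the leafwise-harmonic function to a global solution of \eqref{MA-bp}.
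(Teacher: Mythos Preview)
Your overall architecture coincides with the paper's: build the boundary spherical representation $\Psi_p$ (your $F_{\Omega,\,p}$) from the preferred complex geodesics of Theorem~\ref{mainthm1}, set $P_{\Omega,\,p}=P_{\mathbb B^n,\,e_1}\circ\Psi_p$, and read off negativity, continuity on $\overline\Omega\setminus\{p\}$, the boundary values on $\partial\Omega\setminus\{p\}$, and the horosphere description from the mapping properties of $\Psi_p$. Two points deserve comment.

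\textbf{The singularity estimate.} Your justification of $P_{\Omega,\,p}(z)\approx-|z-p|^{-1}$ on $\Gamma_\beta(p)$ via the claim that $\Psi_p^{-1}$ extends to a $C^1$ map near $e_1$ is not warranted under the $C^3$ hypothesis: the paper only establishes that $v\mapsto\varphi_v$ is continuous into $C^{1,\alpha}(\overline\Delta)$ (Theorem~\ref{thm:continuity}), not $C^1$ in the parameter, and no differentiability of $\Psi_p^{-1}$ is proved. The paper obtains the two-sided bound on $|z-p|/|\Psi_p(z)-e_1|$ by a different, more robust route: first it shows (Proposition~\ref{prop:horo-preserving}\,(ii)) that $\Psi_p$ maps $\Gamma_\beta(p)$ into a non-tangential region at $e_1$, then reduces $|z-p|\approx|\Psi_p(z)-e_1|$ to a uniform comparison of Kobayashi distances, namely $\sup_{z\in\Gamma_\beta(p)}|k_{\mathbb B^n}(\Psi_p(z),0)-k_\Omega(z,\Psi_p^{-1}(0))|<\infty$, using the boundary estimates for $k_\Omega$ on strongly pseudoconvex domains together with the relative compactness of $\{\varphi_v(0):v\in\mathcal V_\beta\}$ and $\{\eta_v(0):v\in\mathcal V_\beta\}$. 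Your $C^1$ claim should be replaced by this argument.

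\textbf{Plurisubharmonicity and maximality.} Here you take a genuinely different route. The paper proves $P_{\Omega,\,p}\in{\rm Psh}(\Omega)$ by identifying it with a Phragm\'en--Lindel\"of type upper envelope (so it is a supremum of plurisubharmonic functions and is itself continuous), and proves $(dd^cP_{\Omega,\,p})^n=0$ separately from the leafwise harmonicity $P_{\Omega,\,p}\circ\varphi_v=-P/\langle v,e_1\rangle^2$ via a standard maximality criterion. Your primary plan instead realizes $P_{\Omega,\,p}$ as a locally uniform limit of positive multiples of pluricomplex Green functions $g_\Omega(\cdot,w)$ as $w\to p$, thereby inheriting both plurisubharmonicity and maximality in one stroke from Bedford--Taylor stability. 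This is a reasonable alternative and is in fact consistent with the paper's Remark~\ref{finalrmk}\,(ii), which records $P_\Omega(z,p)=-\partial g_\Omega/\partial\nu_p(z,p)$; but note that the paper derives that identity \emph{a posteriori} from the already-established properties of $P_{\Omega,\,p}$, whereas you would need to prove the locally uniform convergence of $e^{2k_\Omega(z_0,w)}g_\Omega(\cdot,w)$ directly, and this is exactly where the existence of the Busemann limit (which the paper invokes via $g_\Omega(\cdot,w)\in C^{1,1}(\overline\Omega\setminus\{w\})$) comes in. Your fallback envelope argument is essentially the paper's.
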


Here, ${\rm dist}(\,\cdot\,,\partial\Omega)$ denotes the Euclidean distance to the boundary $\partial\Omega$, and ${\rm Psh}(\Omega)$  the set of plurisubharmonic functions on $\Omega$. The precise definition of horospheres in the sense of  Abate will be given in  Section \ref{sect:CMA equations}. Incidentally, Theorem \ref{mainthm2} has been generalized by Bracci-Saracco-Trapani to bounded strongly pseudoconvex domains in $\mathbb C^n$ (with $C^{\infty}$-smooth boundary); see \cite{Bracci-ST} for details.

In his famous  paper \cite{Lempert81} and later work \cite{Lempert83, Lempert86}, Lempert solved the following homogeneous complex Monge-Amp\`{e}re equation on strongly linearly convex domains $\Omega\subset\mathbb C^n$ with $C^{m,\,\alpha}$-smooth boundary, where $m\ge 2$,  $\alpha \in (0, 1)$ and $w\in \Omega$:
\begin{equation}\label{MA-lem}
\begin{cases}
u\in {\rm Psh}(\Omega)\cap
L_{{\rm loc}}^{\infty}(\Omega\setminus\{w\}), \\
(dd^c u)^n=0 \qquad \qquad  \quad \!\quad\ \ \:\, {\rm on}\,\ \Omega \setminus\{w\}, \\
\lim_{z\to x}u(z)=0  \qquad \qquad \quad\!  \: {\rm for}\,\ x\in\partial\Omega, \\
u(z)-\log|z-w|=O(1)\ \ \;  {\rm as} \ \ z\to w.\\
\end{cases}
\end{equation}
By establishing a singular foliation  with complex geodesic discs passing through  $w\in \Omega$ as its holomorphic leaves, Lempert obtained  a solution to equation \eqref{MA-lem} that is
$C^{m-1,\,\alpha -\varepsilon}$-smooth on $\overline{\Omega}\setminus\{w\}$ for $0<\epsilon<<1$.
Chang-Hu-Lee \cite{Chang-Hu-Lee88} generalized Lempert's work to obtain the holomorphic foliation by complex geodesic discs initiated from  a boundary point $p\in \partial \Omega$, when $\partial\Omega$ is at least $C^{14}$-smooth. By means of this foliation together with many new ideas, Bracci-Patrizio \cite{Bracci-MathAnn} first studied equation \eqref{MA-bp} when $\Omega$ is strongly convex with $C^{m,\,\alpha}$-smooth boundary for $m\ge 14$. They obtained a solution that is $C^{m-4,\,\alpha}$-smooth on
$\overline{\Omega}\setminus\{p\}$, though they only stated the result for $m=\infty$. To study such a foliation based on a boundary point when the boundary of the domain has  minimal regularity, one is led to the construction of complex geodesics introduced in \cite{Huang-Pisa94}. However, to make the construction there workable, one first needs to solve the uniqueness problem of complex geodesics with prescribed boundary data, which is a main content of Theorem \ref{mainthm1}.

We proceed by remarking  that the $C^3$-regularity of $\partial\Omega$ seems to be the optimal regularity in the theory of complex geodesics, cf. \cite{Lempert81, Lempert84, Lempert86, Chang-Hu-Lee88, Chirka83, Chirka-CS99, Huang-Pisa94, KW13}. Also compared with \cite{Chang-Hu-Lee88, Bracci-MathAnn, Bracci-Trans},
our argument in this paper uses the boundary regularity of complex geodesics and their dual mappings in a symmetric way, so that $C^3$-regularity of $\partial\Omega$ is enough; see Sections \ref{sect:uniqueness of geodesics} and  \ref{subsect:spherical rep} for details. Our solution $P_{\Omega,\,p}$ to equation \eqref{MA-bp} is the pullback of the so-called  pluricomplex Poisson kernel on the open unit ball in $\mathbb C^n$ via the aforementioned  boundary spherical representation. It is desirable to get a better relationship
between the regularity of $P_{\Omega,\,p}$ and that of $\partial\Omega$, which will be left to a future investigation to avoid this paper being too long.  Also, it is well worth answering the  following fairly natural and interesting question concerning  Theorem \ref{mainthm2}:

\begin{question} \label{0100}
Is there only one solution (up to a positive constant multiple) to equation \eqref{MA-bp}?
\end{question}

When $\Omega$ is a bounded strongly {\it convex} domain in $\mathbb C^n\, (n>1)$ with  $C^{\infty}$-smooth boundary, Bracci-Patrizio-Trapani \cite{Bracci-Trans} proved that  other solutions to  equation \eqref{MA-bp} must be the positive constant multiples of the one they constructed if they share some common analytic or geometric features. However, it seems  difficult to answer the above question in full generality. In contrast, the uniqueness of solutions to equation \eqref{MA-lem} is relatively easy and follows immediately from the well-known comparison principle for the complex Monge-Amp\`{e}re operator, proved by Bedford-Taylor \cite{Bedford82}. A partial answer to the above question will be observed in Section \ref{sect:CMA equations}. We also refer the interested reader to \cite[Question 7.6]{Bracci-Trans} for a related but more general question posed for bounded strongly convex domains in $\mathbb C^n$ with $C^{\infty}$-smooth boundary.

This paper is organized as follows. In Section \ref{sect:uniqueness of geodesics}, we first prove a quantitative version of the Burns-Krantz rigidity theorem. We then study the boundary regularity of the Lempert left inverse of complex geodesics. Theorem \ref{mainthm1} is eventually proved by using these results together with some technical estimates. Section \ref{subsect:spherical rep} is devoted to the construction and the study of a new boundary spherical representation for bounded strongly linearly convex domains in $\mathbb C^n\, (n>1)$ only with $C^3$-smooth boundary. Finally, Theorem \ref{mainthm2} is  proved in Section \ref{sect:CMA equations}.

\section{ Uniqueness of complex geodesics with prescribed boundary data}\label{sect:uniqueness of geodesics}
This section is devoted to the proof of Theorem \ref{mainthm1}. We begin by presenting the following  version of the well-known Burns-Krantz rigidity theorem. For earlier and very recent related work, see \cite{Chang-Hu-Lee88, Burns-Krantz, Huang-Canad95, BZZ06, Shoikhet, Zimmer18,Bracci20}, etc.

\begin{lemma}\label{thm:effectiveness of BK}
Let $f$ be a holomorphic self-mapping of $\Delta$ such that
\begin{equation}\label{assumption in BK}
f(\zeta_k)=\zeta_k+O(|\zeta_k-1|^3)
\end{equation}
as $k\to \infty$, where $\{\zeta_k\}_{k\in\mathbb N}$ is a sequence in $\Delta$ converging non-tangentially to $1$. Then
\begin{enumerate}[leftmargin=2.0pc, parsep=4pt]
\item  [{\rm (i)}]
    $$
    {\rm Re}\bigg(\frac{f(\zeta)-\zeta}{(\zeta-1)^2}\bigg)\geq 0,\quad \zeta\in\Delta.
    $$
\item  [{\rm (ii)}] $f'''$ admits a non-tangential limit at $1$, denoted by $f'''(1)$, which is a non-positive real number and satisfies the following inequality:
    $$
    |f(\zeta)-\zeta|^2\leq -\frac 13f'''(1)\frac{|1-\zeta|^6}{1-|\zeta|^2}
    {\rm Re}\bigg(\frac{f(\zeta)-\zeta}{(\zeta-1)^2}\bigg),\quad \zeta\in\Delta.
    $$
    In particular, $f$ is the identity if and only if $f'''(1)=0$.
\end{enumerate}
\end{lemma}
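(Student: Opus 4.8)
The plan is to reduce the lemma to the analysis of two auxiliary holomorphic functions on $\Delta$. For a holomorphic self-map $f$ of $\Delta$ put
$$
\beta(\zeta):=\frac{1+f(\zeta)}{1-f(\zeta)}-\frac{1+\zeta}{1-\zeta},\qquad
G(\zeta):=\frac{f(\zeta)-\zeta}{(\zeta-1)^2},
$$
so that $\beta(\zeta)=\dfrac{2(f(\zeta)-\zeta)}{(1-f(\zeta))(1-\zeta)}$, and, after clearing denominators, one gets the key identity
$$
\frac{1}{G(\zeta)}=\frac{2}{\beta(\zeta)}+(1-\zeta),\qquad \zeta\in\Delta,
$$
valid wherever $f(\zeta)\ne\zeta$. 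Note $f$ cannot be constant, since \eqref{assumption in BK} forces $f(\zeta_k)\to1\notin\Delta$; and if $\beta$ were a non-zero purely imaginary constant, then $f$ would be a parabolic automorphism of $\Delta$ fixing $1$, for which a direct computation (via the right half-plane model) gives $|f(\zeta)-\zeta|\asymp|1-\zeta|^2$ near $1$, contradicting \eqref{assumption in BK}. Thus it suffices to treat the trivial case $f={\rm id}$ (where $G\equiv0$, $f'''\equiv0$, and all assertions are immediate) and the case $f\ne{\rm id}$, which we assume henceforth and in which we will see $\beta$ is non-constant.

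First I would invoke Julia's lemma. From \eqref{assumption in BK} and $\zeta_k\to1$ non-tangentially one checks $f(\zeta_k)\to1$ and $\tfrac{1-|f(\zeta_k)|}{1-|\zeta_k|}\to1$, the error $|f(\zeta_k)-\zeta_k|=O(|1-\zeta_k|^3)$ being negligible against $1-|\zeta_k|\asymp|1-\zeta_k|$. Julia's lemma then yields $\tfrac{|1-f(\zeta)|^2}{1-|f(\zeta)|^2}\le\tfrac{|1-\zeta|^2}{1-|\zeta|^2}$ on $\Delta$, i.e. ${\rm Re}\,\tfrac{1+f(\zeta)}{1-f(\zeta)}\ge{\rm Re}\,\tfrac{1+\zeta}{1-\zeta}$, that is ${\rm Re}\,\beta\ge0$. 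By the minimum principle, either ${\rm Re}\,\beta\equiv0$ — forcing $\beta$ to be an imaginary constant, hence (by the previous paragraph) $f={\rm id}$, already excluded — or ${\rm Re}\,\beta>0$ on $\Delta$. In the latter case $\beta$ never vanishes, so $f(\zeta)\ne\zeta$ everywhere, $G$ never vanishes, and $\psi:=2/\beta$ is holomorphic with ${\rm Re}\,\psi>0$. The key identity becomes $1/G=\psi+(1-\zeta)$, whence ${\rm Re}(1/G)={\rm Re}\,\psi+{\rm Re}(1-\zeta)>0$; since ${\rm Re}(1/G)={\rm Re}(G)/|G|^2$, this is exactly part (i).

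For part (ii) I would use the Herglotz representation $\psi(\zeta)=ia+\int_{\partial\Delta}\frac{e^{i\theta}+\zeta}{e^{i\theta}-\zeta}\,d\mu(\theta)$ with $a\in\mathbb R$ and $\mu$ a finite positive Borel measure, and set $m:=\mu(\{1\})$. Splitting off the mass at $\theta=0$ and applying dominated convergence along a Stolz angle gives the non-tangential limit $\lim_{\zeta\to1}(1-\zeta)\psi(\zeta)=2m$; combined with $(1-\zeta)\psi(\zeta)=\frac{(1-\zeta)^3}{f(\zeta)-\zeta}-(1-\zeta)^2$ this yields $\lim_{\zeta\to1,\,{\rm n.t.}}\frac{(1-\zeta)^3}{f(\zeta)-\zeta}=2m$. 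Evaluating along $\{\zeta_k\}$ and using $|f(\zeta_k)-\zeta_k|=O(|\zeta_k-1|^3)$ forces $m>0$. Hence $g(\zeta):=\frac{f(\zeta)-\zeta}{(\zeta-1)^3}$, holomorphic on $\Delta$, has the non-tangential limit $g(1)=-1/(2m)$ at $1$; being a non-tangential limit this already makes $g$ bounded on each Stolz angle near $1$, so Cauchy's estimates on disks of radius $\asymp1-|\zeta|$ sitting inside a slightly wider Stolz angle give $(\zeta-1)^jg^{(j)}(\zeta)\to0$ non-tangentially for $j=1,2,3$. Since $f'''(\zeta)=6g(\zeta)+18(\zeta-1)g'(\zeta)+9(\zeta-1)^2g''(\zeta)+(\zeta-1)^3g'''(\zeta)$, it follows that $f'''$ admits a non-tangential limit at $1$, namely $f'''(1)=6g(1)=-3/m<0$. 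The asserted inequality, after using $-\tfrac13f'''(1)=1/m$ and $f(\zeta)-\zeta=G(\zeta)(1-\zeta)^2$ and dividing by $|G(\zeta)|^2|1-\zeta|^4$ (both sides vanishing when $f={\rm id}$), is equivalent to ${\rm Re}(1/G(\zeta))\ge m\,\frac{1-|\zeta|^2}{|1-\zeta|^2}$; but ${\rm Re}(1/G)={\rm Re}\,\psi+{\rm Re}(1-\zeta)\ge{\rm Re}\,\psi$, and retaining only the atom at $1$ in ${\rm Re}\,\psi(\zeta)=\int_{\partial\Delta}\frac{1-|\zeta|^2}{|e^{i\theta}-\zeta|^2}\,d\mu(\theta)$ gives exactly ${\rm Re}\,\psi(\zeta)\ge m\,\frac{1-|\zeta|^2}{|1-\zeta|^2}$. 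Finally $f={\rm id}$ gives $f'''(1)=0$, while $f'''(1)=0$ is incompatible with $f\ne{\rm id}$ (then $f'''(1)=-3/m<0$), so $f={\rm id}$ iff $f'''(1)=0$.

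I expect the main obstacle to be the existence and identification of the non-tangential limit $f'''(1)$. The hypothesis \eqref{assumption in BK} gives only a single-sequence estimate, so one must first manufacture a genuine non-tangential limit for $(f(\zeta)-\zeta)/(\zeta-1)^3$ out of the Herglotz atom of $\psi$, and then upgrade a non-tangential limit of $g$ to non-tangential limits of its derivatives via the shrinking-disk Cauchy estimates. The elementary exclusion of $\beta$ being a non-zero imaginary constant is the second delicate point: it is precisely where the cubic — rather than merely quadratic — flatness in \eqref{assumption in BK} is used, in order to rule out the parabolic automorphisms of $\Delta$ fixing $1$.
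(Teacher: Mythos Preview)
Your proof is correct and shares the paper's opening moves: the same auxiliary functions $\beta$ (the paper's $g$) and $G$ (the paper's $\varphi$), Julia's lemma to get ${\rm Re}\,\beta\ge0$, and the maximum principle to upgrade to strict positivity when $f\ne{\rm id}$, yielding part~(i). For part~(ii), however, you take a genuinely different route. The paper Cayley-transforms $G$ to obtain a holomorphic self-map $\psi=(1-G)/(1+G)$ of $\Delta$ and applies the Julia--Wolff--Carath\'eodory theorem a \emph{second} time, to $\psi$: the angular derivative $\psi'(1)$ then furnishes both the non-tangential limit of $(f(\zeta)-\zeta)/(\zeta-1)^3$ and, via Julia's inequality $\frac{|1-\psi|^2}{1-|\psi|^2}\le\psi'(1)\frac{|1-\zeta|^2}{1-|\zeta|^2}$, the asserted estimate in one stroke. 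You instead work with $2/\beta$, invoke the Herglotz representation, and identify $-\tfrac13 f'''(1)$ as the reciprocal of the atom $m=\mu(\{1\})$; the inequality then falls out by retaining only that atom in the Poisson integral for ${\rm Re}(2/\beta)$ and using your key identity $1/G=2/\beta+(1-\zeta)$. The two approaches are essentially dual (JWC itself being a consequence of Herglotz), but yours makes the role of the point mass transparent and is arguably more self-contained, while the paper's is slicker in packaging both the limit and the inequality into a single appeal to JWC. You are also more careful than the paper in explicitly ruling out the case $\beta\equiv ic$ with $c\ne0$ (the parabolic automorphisms fixing $1$), a point the paper's maximum-principle step glosses over.
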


Burns-Krantz rigidity theorem was first proved and generalized in \cite{Burns-Krantz}, \cite{Huang-Canad95}, respectively. A version of the Burns-Krantz theorem with the notation $``O"$ in assumption $\eqref{assumption in BK}$ replaced by $``o"$  was first proved by Baracco-Zaitsev-Zampieri in \cite{BZZ06}. With more regularity assumptions about $f$ at $1$, Lemma \ref{assumption in BK} (i) was obtained earlier in \cite{Shoikhet}. A version of Lemma \ref{assumption in BK} (ii) was also discussed in the same paper;
see \cite[Corollary 7]{Shoikhet}.  However, the inequality obtained there is incorrect as the following example shows:

Let $f$ be the holomorphic self-mapping of $\Delta$ given by
\[
f(\zeta)=\frac{10\zeta+(1-\zeta)^2}{10+(1-\zeta)^2}.
\]
Then it is easy to see that $f$ satisfies the assumption in \cite[Corollary 7]{Shoikhet}. Note also that the function ${\rm Re}\left((\zeta-f(\zeta))(1-\overline\zeta)^2\right)$ is negative, rather than positive as stated there. Now  if the estimate in \cite[Corollary 7]{Shoikhet} would hold even after correcting the sign, we would have the following inequality:
    $$
    |f(\zeta)-\zeta|^2\leq -\frac 16f'''(1)\frac{{\rm Re}\big((f(\zeta)-\zeta)(1-\overline{\zeta})^2\big)}{1-|\zeta|^2}
    ,\quad \zeta\in\Delta.
    $$
But evaluating both sides at $\zeta=-1/3$, we see that the preceding
inequality is incorrect.

We now move to the proof of Lemma \ref{assumption in BK}, which is very short and self-contained.

\begin{proof}[Proof of Lemma $\ref{assumption in BK}$]
If $f={\rm Id_{\Delta}}$, then there is nothing to prove. So we next assume that $f\neq {\rm Id_{\Delta}}$. Note that
$$
\lim_{k\to\infty}\frac{1-|f(\zeta_k)|}{1-|\zeta_k|}=1.
$$
By the Julia-Wolff-Cararth\'{e}odory theorem (see, e.g., \cite[Section 1.2.1]{Abatebook}, \cite[Chapter VI]{Sarasonbook}), the quotient $(f(\zeta)-1)/(\zeta-1)$ tends to $1$ as $\zeta\to 1$ non-tangentially. Moreover,
\begin{equation}\label{ineq:JWC01}
\frac{|1-f(\zeta)|^2}{1-|f(\zeta)|^2}\leq\frac{|1-\zeta|^2}{1-|\zeta|^2},
 \quad \zeta\in\Delta.
\end{equation}
Now we consider the holomorphic function $g\!:\Delta\to\mathbb C$ given by
$$
g(\zeta):=\frac{1+f(\zeta)}{1-f(\zeta)}-\frac{1+\zeta}{1-\zeta}.
$$
Then inequality \eqref{ineq:JWC01} implies that $g$ maps $\Delta$ into the closed right half-plane. Since $f\neq {\rm Id_{\Delta}}$, by the maximum principle applied to $-{\rm Re}\,g$ we have that $g(\Delta)$ is
contained in the right half-plane. In particular,
$$
(1-\zeta)g(\zeta)+2\neq 0, \quad \zeta\in\Delta.
$$
On the other hand, since
\begin{equation}\label{ineq:JWC02}
\varphi(\zeta):=\frac{f(\zeta)-\zeta}{(\zeta-1)^2}=\frac{g(\zeta)}{(1-\zeta)g(\zeta)+2},
\end{equation}
we see that
$$
{\rm Re}\,\varphi(\zeta)=\frac{|g(\zeta)|^2{\rm Re}\,(1-\zeta)+2{\rm
Re}\, g(\zeta)}{|(1-\zeta)g(\zeta)+2|^2}>0, \quad \zeta\in\Delta.
$$
This completes the proof of ${\rm (i)}$.

Next, set
\begin{equation}\label{ineq:JWC03}
\psi:=\frac{1-\varphi}{1+\varphi}.
\end{equation}
Then $\psi(\Delta)\subseteq \Delta$, and
\begin{equation}\label{ineq:JWC04}
1-|\psi|^2=\frac{4 {\rm Re}\,\varphi}{|1+\varphi|^2}.
\end{equation}
Together with \eqref{assumption in BK} and \eqref{ineq:JWC02}, this implies that
$$
\liminf_{\Delta\ni\zeta\to
1}\frac{1-|\psi(\zeta)|^2}{1-|\zeta|^2}<+\infty.
$$
By applying the Julia-Wolff-Cararth\'{e}odory theorem again, we see that $(\psi(\zeta)-1)/(\zeta-1)$ admits a non-tangential limit at $1$, denoted by $\psi'(1)$, which is a positive real number and satisfies that
\begin{equation}\label{ineq:JWC05}
\frac{|1-\psi(\zeta)|^2}{1-|\psi(\zeta)|^2}\leq\psi'(1)\frac{|1-\zeta|^2}{1-|\zeta|^2},
 \quad \zeta\in\Delta.
\end{equation}
Furthermore, we can conclude that
$$
\frac{f(\zeta)-\zeta}{(\zeta-1)^3}=\frac{\varphi(\zeta)}{\zeta-1}
=-\frac{1}{1+\psi(\zeta)}\frac{\psi(\zeta)-1}{\zeta-1}\to -\frac12\psi'(1)
$$
as $\zeta\to 1$ non-tangentially. Together with a standard argument using the Cauchy integral formula, this also implies that $f'''$ has a non-tangential limit $f'''(1)$  at $1$, and
\begin{equation}\label{ineq:JWC06}
f'''(1)=-3\psi'(1)<0.
\end{equation}
Now the desired inequality follows immediately by substituting
\eqref{ineq:JWC02}-\eqref{ineq:JWC04} and \eqref{ineq:JWC06} into
\eqref{ineq:JWC05}.
\end{proof}

We next prove Proposition \ref{prop:regularity of leftinv}, which is crucial for our subsequent arguments. This proposition might be known to experts. Since being unable to locate a good reference for its proof, we will give a detailed argument for the reader's convenience. To this end, we need to recall some known results obtained by Lempert \cite{Lempert81, Lempert84}. Let $\Omega\subset\mathbb C^n\, (n>1)$ be a bounded strongly linearly convex domain with $C^{m,\, \alpha}$-smooth boundary, where $m\ge 2$ and $\alpha\in(0, 1)$. Let $\varphi$ be a complex geodesic of $\Omega$, and $\varphi^{\ast}$ be its dual mapping. Then by \cite[Theorem 2]{Lempert84} (see also \cite[Theorem 1.14]{KW13}), the winding number of the function
$$
\partial\Delta\ni \zeta\mapsto \langle z-\varphi(\zeta),\,
 \overline{\varphi^{\ast}(\zeta)}
\rangle
$$
is one for all $z\in\Omega$. Hence for every $z\in\Omega$,  the equation
\begin{equation*}\label{eq:left-inverse of CGs}
\langle z-\varphi(\zeta),\, \overline{\varphi^{\ast}(\zeta)}\rangle=0
\end{equation*}
admits a unique solution $\zeta:=\varrho(z)\in\Delta$. We denote by $\varrho\!: \Omega\to \Delta$  the function defined in such a way, which is uniquely determined by $\varphi$ and  is holomorphic such that
$\varrho\circ\varphi={\rm Id}_{\Delta}$. If we set
\begin{equation*}\label{def:lem-proj}
\rho:=\varphi\circ\varrho,
\end{equation*}
then $\rho\in\mathcal{O}(\Omega,\,\Omega)$ is a holomorphic retraction of $\Omega$ (i.e., $\rho\circ\rho=\rho$) with image $\rho(\Omega)=\varphi(\Delta)$. In the rest of this paper, we will refer
to $\varrho$ and $\rho$ as the {\it Lempert left inverse of $\varphi$}, and the {\it Lempert retraction associated to $\varphi$}, respectively.

\begin{proposition}\label{prop:regularity of leftinv}
Let $\Omega\subset\mathbb C^n\, (n>1)$ be a bounded strongly linearly convex domain with $C^m$-smooth boundary, where $m\geq 3$. Let $\varphi$ be a complex geodesic of $\Omega$ and $\varrho$ be the Lempert left inverse of $\varphi$. Then $\varrho\in \mathcal{O}(\Omega,\,\Delta)\cap C^{m-2,\,\alpha}(\overline{\Omega})$ for all $\alpha\in(0, 1)$, and $\varrho(\overline{\Omega}\setminus\varphi(\partial\Delta))\subset\Delta$.
Moreover, for every multi-index $\nu\in\mathbb N^n$ with $|\nu|=m-1$, $\frac{\partial^{|\nu|}\varrho}{\partial z^{\nu}}$ admits a non-tangential limit at every point
$p\in\varphi(\partial\Delta)$, denoted by $\frac{\partial^{|\nu|}\varrho}{\partial z^{\nu}}(p)$; and for every $0<\alpha<1$ and $\beta>1$, there exists a constant $C_{p,\,\alpha,\,\beta}>0$ such that
\begin{equation}\label{ineq:asy-oeder}
   \left|\frac{\partial^{|\nu|}\varrho}{\partial z^{\nu}}(z)-\frac{\partial^{|\nu|}\varrho}{\partial z^{\nu}}(p)\right|\leq C_{p,\,\alpha,\,\beta}|z-p|^{\alpha},
\end{equation}
for all $z\in \Gamma_{\beta}(p)$, which is as in \eqref{defn:nontangential}.
\end{proposition}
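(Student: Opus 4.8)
The plan is to derive the boundary regularity of the Lempert left inverse $\varrho$ from the known boundary regularity of the complex geodesic $\varphi$ and its dual mapping $\varphi^*$, exploiting the explicit formula defining $\varrho$ via the equation $\langle z-\varphi(\varrho(z)), \overline{\varphi^*(\varrho(z))}\rangle = 0$. Recall from Lempert's work that $\varphi, \varphi^* \in C^{m-1,\alpha}(\overline{\Delta})$ when $\partial\Omega$ is $C^{m,\alpha}$, but here we only assume $C^m$, so we get $\varphi,\varphi^* \in C^{m-2,\alpha}(\overline{\Delta})$ for every $\alpha \in (0,1)$ (this costs one derivative when passing from $C^m$ to $C^{m-1,\alpha}$-type bounds; one can also invoke the interior elliptic-type estimates of Lempert). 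The core of the proof is an implicit-function-theorem argument carried out up to the boundary.

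Here is the order of steps I would carry out. \textbf{Step 1 (Setup and the defining equation near a boundary point).} Fix $p = \varphi(\sigma)$ with $\sigma \in \partial\Delta$. Consider the function $F(z,\zeta) := \langle z - \varphi(\zeta), \overline{\varphi^*(\zeta)}\rangle$ defined for $z$ near $\overline{\Omega}$ and $\zeta \in \overline{\Delta}$; it is holomorphic in $\zeta$ on $\Delta$ and, by Lempert's regularity, $C^{m-2,\alpha}$ jointly up to the corresponding boundary pieces. By definition $F(z,\varrho(z)) = 0$, and $F(p,\sigma) = 0$. \textbf{Step 2 (Nonvanishing of $\partial F/\partial\zeta$ at the boundary).} The key is that $\frac{\partial F}{\partial\zeta}(p,\sigma) \neq 0$. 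Using the normalization $\langle \varphi', \overline{\varphi^*}\rangle = 1$ on $\overline{\Delta}$ and the strong linear convexity, one computes $\frac{\partial F}{\partial\zeta}(z,\zeta) = -\langle \varphi'(\zeta),\overline{\varphi^*(\zeta)}\rangle - \langle z - \varphi(\zeta), \overline{(\varphi^*)'(\zeta)}\rangle = -1 - \langle z - \varphi(\zeta), \overline{(\varphi^*)'(\zeta)}\rangle$, which at $z = p = \varphi(\sigma)$ equals $-1 \neq 0$. More generally, by continuity $\frac{\partial F}{\partial\zeta}$ is bounded away from $0$ for $(z,\zeta)$ in a neighborhood of $(p,\sigma)$ in $\overline{\Omega}\times\overline{\Delta}$. \textbf{Step 3 (Quantitative implicit function theorem up to the boundary).} With $\frac{\partial F}{\partial\zeta}\neq 0$ near $(p,\sigma)$ and $F \in C^{m-2,\alpha}$ up to the boundary (in the sense of non-tangential / admissible approach regions, since $\varphi(\partial\Delta)$ may only be approached non-tangentially in $\Omega$ while the image is smooth), apply a version of the implicit function theorem valid on such one-sided neighborhoods to conclude that $\varrho$ extends $C^{m-2,\alpha}$ up to $p$ and that its derivatives of order up to $m-1$ have non-tangential limits satisfying the Hölder estimate \eqref{ineq:asy-oeder}. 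The derivatives of $\varrho$ are expressed through Faà di Bruno-type formulas in the derivatives of $F$ and powers of $(\partial F/\partial\zeta)^{-1}$, so the order $m-1$ derivative involves order $m-2$ derivatives of $\varphi,\varphi^*$ — i.e.\ exactly the $C^{m-2,\alpha}$ regularity — hence the $|z-p|^\alpha$ bound on $\Gamma_\beta(p)$. \textbf{Step 4 (Global statements).} That $\varrho \in \mathcal{O}(\Omega,\Delta) \cap C^{m-2,\alpha}(\overline{\Omega})$ globally follows by covering $\partial\Omega$: away from $\varphi(\partial\Delta)$ use that $\varphi(\partial\Delta)\subset\partial\Omega$ is a closed proper subset and that $F(z,\cdot)$ has winding number one with no zero on $\partial\Delta$, giving a uniform gap keeping $\varrho(z)$ in a compact subset of $\Delta$; near $\varphi(\partial\Delta)$ use Steps 1--3. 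The statement $\varrho(\overline{\Omega}\setminus\varphi(\partial\Delta))\subset\Delta$ follows because if $z\in\overline{\Omega}$ with $\varrho(z)\in\partial\Delta$, then $F(z,\varrho(z))=0$ forces (using Hopf-lemma/boundary-point considerations as in the winding number argument and the fact that $\varphi(\partial\Delta)\subset\partial\Omega$ with $\varphi$ a proper embedding) $z \in \varphi(\partial\Delta)$.

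I expect the main obstacle to be \textbf{Step 3}: making the implicit function theorem genuinely work \emph{up to the boundary} with the delicate point that $\varphi(\partial\Delta)$ is contained in $\partial\Omega$, so one cannot take a full two-sided neighborhood of $p$ in $\mathbb{C}^n$; one must work within $\overline{\Omega}$ (or within $\Gamma_\beta(p)$) and control how the Hölder norms degenerate. Concretely, the derivatives $\frac{\partial^{|\nu|}\varrho}{\partial z^\nu}$ for $|\nu| = m-1$ sit at the edge of the available regularity of $\varphi^*$, so one needs the sharp boundary Hölder estimates for $\varphi,\varphi^*$ (the $C^{m-2,\alpha}$ bound for \emph{every} $\alpha<1$, which is essentially optimal given only $C^m$ boundary) together with the fact that composition with the Hölder-continuous map $\varrho$, whose own derivatives of order $m-1$ are only known to have non-tangential limits, still produces the claimed estimate \eqref{ineq:asy-oeder}. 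A clean way to organize this is to first prove $\varrho \in C^{m-3,\alpha}(\overline\Omega)$ by a soft argument, then bootstrap one more derivative using the explicit formula $\varrho'(z)$-type identities and the boundary Hölder continuity of the top-order data, carefully tracking that all compositions occur with functions already shown to be $C^{m-3,\alpha}$ up to the boundary. The other, more routine, obstacle is verifying the joint boundary regularity and the non-tangential-limit bookkeeping for $F$, which follows from Lempert's results but requires stating precisely in which approach regions the limits are taken.
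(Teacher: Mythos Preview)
Your overall strategy---extend $\varrho$ to $\overline{\Omega}$ via the implicit function theorem applied to $F(z,\zeta)=\langle z-\varphi(\zeta),\overline{\varphi^*(\zeta)}\rangle$, using $\partial F/\partial\zeta=-1$ along the diagonal---matches the paper's. Steps 1, 2, and 4 are essentially what the paper does.

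The gap is in Step 3, and it is exactly the claim you make there: that ``the order $m-1$ derivative involves order $m-2$ derivatives of $\varphi,\varphi^*$.'' This is false. Differentiating $F(z,\varrho(z))=0$ repeatedly, the $(m-1)$-th derivative of $\varrho$ in $z$ produces a term $\langle z-\varphi\circ\varrho(z),\,\overline{(\varphi^*)^{(m-1)}\circ\varrho(z)}\rangle$ (for $m=3$ this is the $(\varphi^*)''$ term in the paper's formula for $\partial^2\varrho/\partial z_j\partial z_k$). Since $\varphi^*\in C^{m-2,\alpha}(\overline{\Delta})$ only, the derivative $(\varphi^*)^{(m-1)}$ is \emph{not} bounded up to $\partial\Delta$; it blows up. So the implicit function theorem in $C^{m-2,\alpha}$ does not directly yield the order-$(m-1)$ non-tangential limits, and no ``sharp H\"older estimate'' for $\varphi^*$ saves you, because the needed quantity simply isn't H\"older.

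What actually makes the estimate work---and what your proposal is missing---is that this bad term comes with the prefactor $z-\varphi\circ\varrho(z)$, which vanishes at $p$. The paper combines three ingredients: (a) the Hopf lemma applied to $\varrho$ gives $1-|\varrho(z)|\gtrsim\mathrm{dist}(z,\partial\Omega)$; (b) the Hardy--Littlewood theorem applied to $(\varphi^*)^{(m-2)}\in C^{\alpha}(\overline{\Delta})$ gives $|(\varphi^*)^{(m-1)}(\zeta)|\lesssim(1-|\zeta|)^{\alpha-1}$; (c) on $\Gamma_\beta(p)$ one has $\mathrm{dist}(z,\partial\Omega)\gtrsim|z-p|$ and $|z-\varphi\circ\varrho(z)|\lesssim|z-p|$. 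Multiplying out, the singular term is bounded by $C|z-p|\cdot|z-p|^{\alpha-1}=C|z-p|^{\alpha}$, which is precisely \eqref{ineq:asy-oeder}. Your bootstrap suggestion does not circumvent this: at whatever stage you reach the top-order derivative, the same blowing-up factor $(\varphi^*)^{(m-1)}$ appears, and it is the vanishing prefactor together with Hopf plus Hardy--Littlewood that controls it.
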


\begin{proof}
Let $\varphi^{\ast}$ be the dual mapping of $\varphi$ as before. Then $\varphi$, $\varphi^{\ast}\in C^{m-2,\,\alpha}(\overline{\Delta})$ for all $\alpha\in (0, 1)$. For every $z\in\Omega$, $\zeta:=\varrho(z)\in\Delta$ is by definition the only solution to the equation
\begin{equation}\label{eq:left-inv-eqn}
   \langle z-\varphi(\zeta),\, \overline{\varphi^{\ast}(\zeta)}\rangle=0.
\end{equation}
More explicitly,
\begin{equation}\label{eq:left-inv-for}
   \varrho(z)=\frac1{2\pi i}\int_{\partial\Delta}\zeta\frac{\langle z-\varphi(\zeta),\,
   \overline{(\varphi^{\ast})'(\zeta)}\rangle-1}{\langle z-\varphi(\zeta),\, \overline{\varphi^{\ast}(\zeta)}\rangle}d\zeta, \quad z\in\Omega.
\end{equation}
Obviously, $\varrho\in\mathcal{O}(\Omega,\,\Delta)$. Note that $\varrho$ is initially defined on $\Omega$. Now we show that $\varrho$ can extend $C^{m-2}$-smoothly to $\overline{\Omega}$. Indeed, we first see easily that there is an open set
$U\supset\overline{\Omega}\setminus\varphi(\partial\Delta)$ such that for every $z\in U$, the winding number of the function
$$
\partial\Delta\ni\zeta\mapsto\langle z-\varphi(\zeta),\, \overline{\varphi^{\ast}(\zeta)}
\rangle
$$
is one. Therefore, the right-hand side of \eqref{eq:left-inv-for} determines a  $C^{m-2}$-smooth function from $U$ to $\Delta$, which assigns to every $z\in U$ the only solution to equation \eqref{eq:left-inv-eqn}. This means that $\varrho$ can extend $C^{m-2}$-smoothly to  $\overline{\Omega}\setminus\varphi(\partial\Delta)$. To check the $C^{m-2}$-smooth extendibility of $\varrho$ to $\varphi(\partial\Delta)$, we first extend $\varphi$ and $\varphi^{\ast}$ $C^{m-2}$-smoothly to $\mathbb C$. We still denote by $\varphi$ and $\varphi^{\ast}$ their extensions, and consider the function
$$
F(z, \zeta):=\langle z-\varphi(\zeta),\,
\overline{\varphi^{\ast}(\zeta)}\rangle, \quad (z, \zeta)\in\mathbb
C^n\times\mathbb C.
$$
Then for every $\zeta_0\in\partial\Delta$, it holds that $F(\varphi(\zeta_0), \zeta_0)=0$ and $\frac{\partial
F}{\partial\zeta}(\varphi(\zeta_0), \zeta_0)=-1$. Thus by the implicit function theorem, there exists a neighborhood $U_{\zeta_0}\times V_{\zeta_0}$ of $(\varphi(\zeta_0), \zeta_0)$ and
a function $\varrho_0\in C^{m-2}(U_{\zeta_0}, V_{\zeta_0})$ such that
$$
\big\{(z, \zeta)\in U_{\zeta_0}\times V_{\zeta_0}\!: F(z, \zeta)=0\big\}=\big\{(z, \varrho_0(z))\!: z\in U_{\zeta_0}\big\}.
$$
Now by uniqueness of the solution to equation \eqref{eq:left-inv-eqn}, we see that $\varrho=\varrho_0$ on
$U_{\zeta_0}\cap(\overline{\Omega}\setminus\varphi(\partial\Delta))$ for all $\zeta_0\in\partial\Delta$. In other words, $\varrho$ can also extend $C^{m-2}$-smoothly to  $\varphi(\partial\Delta)$ as desired.

Now we assume that $m=3$. Differentiating the equality
$$
\langle z-\varphi\circ\varrho(z),\, \overline{\varphi^{\ast}\circ\varrho(z)}\rangle=0
$$
on $\overline{\Omega}$ with respect to $z_j$, and taking into account that $\langle\varphi',\, \overline{\varphi^{\ast}}\rangle=1$, we see that
$$
\frac{\partial \varrho}{\partial z_j}(z)
\Big(1-\langle z-\varphi\circ\varrho(z),\, \overline{(\varphi^{\ast})'\circ\varrho(z)}\rangle\Big)
=\langle e_j,\, \overline{\varphi^{\ast}\circ\varrho(z)}\rangle
$$
for all $z\in\overline{\Omega}$. Since $\varphi^{\ast}$ is nowhere vanishing on $\overline{\Delta}$, the preceding equality implies that
\begin{equation}\label{ineq:nonvanishing}
\inf_{z\in\overline{\Omega}}
\big|1-\langle z-\varphi\circ\varrho(z),\,
\overline{(\varphi^{\ast})'\circ\varrho(z)}\rangle\big|>0,
\end{equation}
and thus
\begin{equation}\label{eq:formular of left-inverse}
\frac{\partial \varrho}{\partial z_j}(z)=
\frac{\langle e_j,\, \overline{\varphi^{\ast}\circ\varrho(z)}\rangle}
{1-\langle z-\varphi\circ\varrho(z),\, \overline{(\varphi^{\ast})'\circ\varrho(z)}\rangle},
\quad z\in\overline{\Omega}.
\end{equation}
In particular, this implies that
\begin{equation}\label{eq: re-dual-lefinv}
\varphi^{\ast}=\left(\frac{\partial \varrho}{\partial z_1}\circ\varphi,\ldots,\frac{\partial \varrho}{\partial z_n}\circ\varphi\right).
\end{equation}
Moreover, from \eqref{ineq:nonvanishing}, \eqref{eq:formular of left-inverse} and the regularity of $\varphi$, $\varphi^{\ast}$ it follows that $\varrho\in  C^{1,\,\alpha}(\overline{\Omega})$ for all $\alpha\in(0, 1)$.
Differentiating equality \eqref{eq:formular of left-inverse} once again yields that
\begin{equation}\label{eq:second-der-leftinv}
\begin{split}
\frac{\partial^2 \varrho}{\partial z_j\partial z_k}&
(z)=\frac{\frac{\partial \varrho}{\partial z_k}(z)
\langle e_j,\, \overline{(\varphi^{\ast})'\circ\varrho(z)}\rangle}
{1-\langle z-\varphi\circ\varrho(z),\, \overline{(\varphi^{\ast})'\circ\varrho(z)}\rangle}
+\frac{\langle e_j,\, \overline{\varphi^{\ast}\circ\varrho(z)}\rangle}
{\big(1-\langle z-\varphi\circ\varrho(z),\, \overline{(\varphi^{\ast})'\circ\varrho(z)}\rangle\big)^2}\cdot
\\
&\Big(
\big\langle e_k-\frac{\partial \varrho}{\partial z_k}(z)\varphi'\circ\varrho(z),\, \overline{(\varphi^{\ast})'\circ\varrho(z)}\big\rangle+\frac{\partial \varrho}{\partial z_k}(z)\big\langle z-\varphi\circ\varrho(z),\, \overline{(\varphi^{\ast})''\circ\varrho(z)}\big\rangle
\Big)
\end{split}
\end{equation}
for all $z\in\Omega$, and $1\leq j, \, k\leq n$. Let $p\in\varphi(\partial\Delta)$, and take $0<\alpha<1$, $\beta>1$. We now need to show that $\frac{\partial^2 \varrho}{\partial z_j\partial z_k}$ admits a non-tangential limit at $p$ and satisfies estimate $\eqref{ineq:asy-oeder}$. In view of \eqref{ineq:nonvanishing}, \eqref{eq:second-der-leftinv}, and the fact that $\varphi$, $\varphi^{\ast}\in C^{1,\,\alpha}(\overline{\Delta})$ and $\varrho\in  C^{1,\,\alpha}(\overline{\Omega})$, it suffices to prove that there exists a constant $C_{p,\,\alpha,\,\beta}>0$ such that
$$
\big|\langle z-\varphi\circ\varrho(z),\, \overline{(\varphi^{\ast})''\circ\varrho(z)}\rangle\big|
\leq C_{p,\,\alpha,\,\beta}|z-p|^{\alpha}
$$
for all $z\in\Gamma_{\beta}(p)$. To this end, it first follows from the Hopf lemma (see, e.g., \cite[Proposition 12.2]{FornaessSCV}) that
$$
1-|\varrho|\geq C{\rm dist}(\,\cdot\,,\partial\Omega)$$
for some constant $C>0$. Now the desired result follows immediately by applying the classical Hardy-Littlewood theorem to $(\varphi^{\ast})'\in \mathcal{O}(\Delta)\cap C^{\alpha}(\overline{\Delta})$. This completes the proof of the case when $m=3$, and the general case follows in an analogous way.
\end{proof}

\begin{remark}\label{rmk:uniqueness of solution}
From the above proof it follows immediately that for every $z\in\overline{\Omega}$, the equation $\langle z-\varphi(\zeta),\, \overline{\varphi^{\ast}(\zeta)}\rangle=0$ admits a unique solution
on $\overline{\Delta}$, which is precisely $\varrho(z)$.
\end{remark}

Now, we are ready to prove Theorem $\ref{mainthm1}$.

\begin{proof}[Proof of Theorem $\ref{mainthm1}$]
As we mentioned in the Introduction, the existence part was already known. So we need only prove the uniqueness part.

Suppose that $\varphi$ and $\widetilde{\varphi}$ are two complex geodesics of $\Omega$ such that $\varphi(1)=\widetilde{\varphi}(1)=p$ and $\varphi'(1)=\widetilde{\varphi}'(1)=v$. For every $t\in\mathbb R$, set
\begin{equation}\label{t-parameter}
\sigma_t(\zeta):=\frac{(1-it)\zeta+it}{-it\zeta+1+it}\in {\rm{Aut}}(\Delta).
\end{equation}
Then we need to prove that there exists a $t_0\in\mathbb R$ such that
$\widetilde{\varphi}=\varphi\circ\sigma_{t_0}$.

We denote by $\varphi^{\ast}$, $\widetilde{\varphi}^{\ast}$ the dual mappings of $\varphi$ and $\widetilde{\varphi}$, respectively. Then $\varphi$, $\widetilde{\varphi}$, $\varphi^{\ast}$,
$\widetilde{\varphi}^{\ast}\in C^{1,\,\alpha}(\overline{\Delta})$ for all $\alpha\in (0, 1)$. Next, we show that there exists a $t_0\in\mathbb R$ such that
\begin{equation}\label{eq:key:normal parameter}
((\varphi\circ\sigma_{t_0})^{\ast})'(1)=(\widetilde{\varphi}^{\ast})'(1).
\end{equation}
To this end, we first write
\begin{equation}\label{eq:expression of dual 1}
\varphi^{\ast}|_{\partial\Delta}(\zeta)=\zeta\mu(\zeta)\overline{\nu\circ\varphi(\zeta)},
\quad\quad
\widetilde{\varphi}^{\ast}|_{\partial\Delta}(\zeta)
=\zeta\widetilde{\mu}(\zeta)\overline{\nu\circ\widetilde{\varphi}(\zeta)},
\end{equation}
and
\begin{equation}\label{eq:expression of dual 2}
(\varphi\circ\sigma_t)^{\ast}|_{\partial\Delta}(\zeta)
=\zeta\mu_t(\zeta)\overline{\nu\circ\varphi\circ\sigma_t(\zeta)},
\end{equation}
where $\mu$, $\mu_t$, $\widetilde{\mu}$ are $C^{1,\,\alpha}$-smooth positive functions on $\partial\Delta$, and $\nu$ denotes the unit  outward normal vector field of $\partial\Omega$. Now note that
$$
(\varphi\circ\sigma_t)'(1)=\varphi'(1)=\widetilde{\varphi}'(1),
$$
and hence
$$
\left.\frac{d}{d\theta}\right|_{\theta=0}\nu\circ\varphi\circ\sigma_t(e^{i\theta})
=\left.\frac{d}{d\theta}\right|_{\theta=0}\nu\circ\widetilde{\varphi}(e^{i\theta})
$$
for all $t\in\mathbb R$. Thus in view of \eqref{eq:expression of dual 1} and \eqref{eq:expression of dual 2}, \eqref{eq:key:normal parameter} is equivalent to
\begin{equation}\label{eq:key:equi normal parameter}
\left.\frac{d}{d\theta}\right|_{\theta=0}\mu_{t_0}(e^{i\theta})
=\left.\frac{d}{d\theta}\right|_{\theta=0}\widetilde{\mu}(e^{i\theta}).
\end{equation}
Now by the definition of dual mappings,
\begin{equation*}
\begin{split}
\mu_t(e^{i\theta}) &= e^{-i\theta}\big\langle(\varphi\circ\sigma_t)'(e^{i\theta}),\,
\nu\circ\varphi\circ\sigma_t(e^{i\theta})\big\rangle^{-1}\\
&=\frac{e^{-i\theta}}{\sigma'_t(e^{i\theta})}
\big\langle\varphi'\circ\sigma_t(e^{i\theta}),\,                   \nu\circ\varphi\circ\sigma_t(e^{i\theta})\big\rangle^{-1}\\
&=\frac{\sigma_t(e^{i\theta})}{e^{i\theta}\sigma'_t(e^{i\theta})}\mu\circ\sigma_t(e^{i\theta})
\end{split}
\end{equation*}
for all $\theta\in\mathbb R$. Moreover, since $\sigma_t(1)=\sigma'_t(1)=1$, it follows that
$$
\left.\frac{d}{d\theta}\right|_{\theta=0}\mu\circ\sigma_t(e^{i\theta})
=\left.\frac{d}{d\theta}\right|_{\theta=0}\mu(e^{i\theta}).
$$
We then conclude by a direct calculation that
\begin{equation*}\label{t-parameter22}
\begin{split}
\left.\frac{d}{d\theta}\right|_{\theta=0}\mu_t(e^{i\theta})
=2t\mu(1)+\left.\frac{d}{d\theta}\right|_{\theta=0}\mu(e^{i\theta})
=\frac{2t}{\langle v,
\nu_p\rangle}+\left.\frac{d}{d\theta}\right|_{\theta=0}\mu(e^{i\theta}),
\end{split}
\end{equation*}
which implies that \eqref{eq:key:equi normal parameter} (and hence \eqref{eq:key:normal parameter}) holds provided
$$
t_0=\frac12\langle v, \nu_p\rangle
\left.\frac{d}{d\theta}\right|_{\theta=0}\big(\widetilde{\mu}(e^{i\theta})-\mu(e^{i\theta})\big).
$$

\medskip
Now we come to show that $\widetilde{\varphi}=\varphi\circ\sigma_{t_0}$.  We first give a proof for the strongly convex case as a warmup. We argue by contradiction, and suppose on the contrary that $\widetilde{\varphi}\neq\varphi\circ\sigma_{t_0}$. Then we have
$$
\widetilde{\varphi}(\overline{\Delta})\cap\varphi\circ\sigma_{t_0}(\overline{\Delta})=\{p\},
$$
since two different closed complex geodesic discs can have at most
one point in common; see \cite[pp. 362--363]{Lempert84}. Together with the strong convexity of
$\Omega$, this further implies that
$$
{\rm Re}\big\langle
\widetilde{\varphi}(\zeta)-\varphi\circ\sigma_{t_0}(\zeta),
\, \overline{\zeta^{-1}\widetilde{\varphi}^{\ast}(\zeta)}
\big\rangle=
\widetilde{\mu}(\zeta){\rm Re}
\big\langle
\widetilde{\varphi}(\zeta)-\varphi\circ\sigma_{t_0}(\zeta), \,\nu\circ\widetilde{\varphi}(\zeta)
\big\rangle>0
$$
and
$$
{\rm Re}\big\langle
\varphi\circ\sigma_{t_0}(\zeta)-\widetilde{\varphi}(\zeta),
\, \overline{\zeta^{-1}(\varphi\circ\sigma_{t_0})^{\ast}(\zeta)}
\big\rangle>0
$$
hold on $\partial\Delta\setminus\{1\}$. Taking summation yields that
\begin{equation}\label{geodesic:key ineq1}
{\rm Re}\big\langle
\widetilde{\varphi}(\zeta)-\varphi\circ\sigma_{t_0}(\zeta), \,
\overline{\zeta^{-1}\big(\widetilde{\varphi}^{\ast}(\zeta)-      (\varphi\circ\sigma_{t_0})^{\ast}(\zeta)\big)}
\big\rangle>0
\end{equation}
for all $\zeta\in\partial\Delta\setminus\{1\}$. Note also that for every $\zeta\in\partial\Delta\setminus\{1\}$,
$$
\frac{\zeta}{(1-\zeta)^2}=\frac14\left(\left(\frac{1+\zeta}{1-\zeta}\right)^2-1\right)\leq-\frac14,
$$
we therefore deduce from \eqref{geodesic:key ineq1} that
\begin{equation}\label{geodesic:key ineq2}
{\rm Re}
\bigg\langle
\frac{\zeta\big(\widetilde{\varphi}(\zeta)-\varphi\circ\sigma_{t_0}(\zeta)\big)}{(1-\zeta)^2},\,
\overline{\frac{\widetilde{\varphi}^{\ast}(\zeta)-(\varphi\circ\sigma_{t_0})^{\ast}(\zeta)}{(1-\zeta)^2}}
\bigg\rangle>0
\end{equation}
on $\partial\Delta\setminus\{1\}$.
On the other hand, since $\widetilde{\varphi}$, $\varphi\circ\sigma_{t_0}$, $\widetilde{\varphi}^{\ast}$,  $(\varphi\circ\sigma_{t_0})^{\ast}\in C^{1,\,\alpha}(\overline{\Delta})$ for $\alpha>1/2$, and
$$
(\varphi\circ\sigma_{t_0})'(1)=\widetilde{\varphi}'(1),
\quad\quad
((\varphi\circ\sigma_{t_0})^{\ast})'(1)=(\widetilde{\varphi}^{\ast})'(1),
$$
we see that the holomorphic function
$$
f(\zeta):=\zeta\bigg\langle
\frac{\widetilde{\varphi}(\zeta)-\varphi\circ\sigma_{t_0}(\zeta)}{(1-\zeta)^2},\,
\overline{\frac{\widetilde{\varphi}^{\ast}(\zeta)-(\varphi\circ\sigma_{t_0})^{\ast}(\zeta)}{(1-\zeta)^2}}
\bigg\rangle
$$
belongs to the Hardy space $H^1(\Delta)$. Together with \eqref{geodesic:key ineq2}, this implies that
$$
0={\rm Re}f(0)={\rm Re}\bigg(\frac1{2\pi i}\int_{\partial\Delta}\frac{f(\zeta)}{\zeta}d\zeta\bigg)
=\frac1{2\pi}\int_{0}^{2\pi}{\rm Re}f(e^{i\theta})d\theta>0.
$$
This is a contradiction. Therefore, we must have $\widetilde{\varphi}=\varphi\circ\sigma_{t_0}$.

\medskip
We now turn to the strongly linearly convex case. The proof of this general case is much more involved  than that of the strongly convex case. We argue as follows. Let $\varrho$ be the Lempert left inverse of $\varphi$. Then in view of Proposition \ref{prop:regularity of leftinv}, $\varrho\in \mathcal{O}(\Omega,\, \Delta)\cap C^{1,\,\alpha}(\overline{\Omega})$ for all $\alpha\in(0, 1)$. We now consider the holomorphic function $\varrho\circ\widetilde{\varphi}$, which is in $C^{1,\,\alpha}(\overline{\Delta})$ for all $\alpha\in(0, 1)$, and satisfies that
$$
\varrho\circ\widetilde{\varphi}(1)=\varrho\circ\varphi(1)=1.
$$
By differentiating, we obtain
$$
(\varrho\circ\widetilde{\varphi})'(\zeta)=\sum_{j=1}^{n}\frac{\partial \varrho}{\partial z_j}\circ\widetilde{\varphi}(\zeta)\widetilde{\varphi}_j'(\zeta)
=\langle \widetilde{\varphi}'(\zeta),\, \overline{({\rm gard
}\,\varrho)\circ\widetilde{\varphi}(\zeta)}\rangle
$$
for all $\zeta\in\overline{\Delta}$, where
$$
({\rm gard }\,\varrho)(z)=\frac{\partial \varrho}{\partial z}(z):
=\left(\frac{\partial \varrho}{\partial z_1}(z),\ldots,\frac{\partial \varrho}{\partial z_n}(z)\right).
$$
In what follows, to simplify the notation, we assume  that the number $t_0$ in \eqref{eq:key:normal parameter} is zero. Then
\begin{equation}\label{eq:equi-der}
(\varphi^{\ast})^{\prime}(1)=(\widetilde{\varphi}^{\ast})^{\prime}(1).
\end{equation}
Moreover, since $\widetilde{\varphi}(1)=\varphi(1)=p$,
 it follows from \eqref{eq: re-dual-lefinv} and the definition of dual mappings  that
\begin{equation}\label{eq:contact01}
({\rm gard }\,\varrho)\circ\widetilde{\varphi}(1)=\widetilde{\varphi}^{\ast}(1)=\varphi^{\ast}(1)
\end{equation}
and
$$
(\varrho\circ\widetilde{\varphi})'(1)
=\langle \widetilde{\varphi}'(1),\, \overline{({\rm gard }\,\varrho)\circ\widetilde{\varphi}(1)}\rangle
=\langle \widetilde{\varphi}'(1),\, \overline{\widetilde{\varphi}^{\ast}(1) }\rangle
=1.
$$

Let $\widetilde{\varrho}$ be the Lempert left inverse of $\widetilde{\varphi}$. We next claim that
\begin{equation}\label{eq:key point of uniqueness}
(\varrho\circ\widetilde{\varphi}+\widetilde{\varrho}\circ\varphi)''(\zeta)=o(|\zeta-1|)
\end{equation}
as $\zeta\to 1$ non-tangentially. This is  the  main part of the proof. First of all, we have
\begin{equation}\label{eq:second der}
(\varrho\circ\widetilde{\varphi})''(\zeta)
=\sum_{j,\, k=1}^{n}\frac{\partial^2 \varrho}{\partial z_j\partial z_k}\circ\widetilde{\varphi}(\zeta)\widetilde{\varphi}_j'(\zeta)\widetilde{\varphi}_k'(\zeta)
+\langle \widetilde{\varphi}''(\zeta),\, \overline{({\rm gard }\,\varrho)\circ\widetilde{\varphi}(\zeta)}\rangle
\end{equation}
for all $\zeta\in\Delta$. Now we try to estimate the second term. To this end,  differentiating both sides of the identity $\varphi^{\ast}=({\rm gard }\,\varrho)\circ\varphi$ (see \eqref{eq: re-dual-lefinv}) yields that
\begin{equation}\label{der-dual}
(\varphi^{\ast})'(\zeta)
=\sum_{k=1}^{n}\frac{\partial^2 \varrho}{\partial z
\partial z_k}\circ\varphi(\zeta)\varphi_k'(\zeta), \quad \zeta\in\Delta.
\end{equation}
For every $\beta>1$, we set
\begin{equation*}
\mathcal{R}_{\beta}:=\big\{\zeta\in \Delta\!: |\zeta-1|<\beta(1-|\zeta|)\big\},
\end{equation*}
which is a non-tangential approach region in $\Delta$ with vertex
$1$ and aperture $\beta$, called a {\it Stolz region}. We show that
for every $\alpha\in(0, 1)$ and every $\beta>1$,
\begin{equation}\label{ineq:estimate00}
\big|(({\rm gard }\,\varrho)\circ\widetilde{\varphi})'(\zeta)-(\widetilde{\varphi}^{\ast})'(\zeta)\big|
\leq C_{\alpha, \,\beta}|\zeta-1|^{\alpha}
\end{equation}
as $\zeta\in\mathcal{R}_{\beta}$. Here and in what follows, $C_{\alpha}$ (resp. $C_{\alpha,\,\beta}$) always denotes a positive constant depending only on $\alpha$ (resp. $\alpha$ and $\beta$), which could be different in different contexts. Note that $\Omega$ is strongly pseudoconvex, we can take a $C^3$-defining function $r$ for $\Omega$ which is strictly plurisubharmonic on some neighborhood of $\overline{\Omega}$. Then the classical Hopf lemma applied to $r\circ\varphi$ yields that
$$
\inf_{\zeta\in\Delta}\frac{-r\circ\varphi(\zeta)}{1-|\zeta|}>0.
$$
Also, it is evident that
$$
\sup_{z\in\Omega}\frac{-r(z)}{{\rm dist}(z, \partial\Omega)}<\infty.
$$
We then see that there exists a constant $C>0$ such that
$$
{\rm dist}(\varphi(\zeta), \partial\Omega)\geq C(1-|\zeta|)
$$
for all $\zeta\in\Delta$. Consequently, we conclude that $\varphi$ maps every non-tangential approach region in $\Delta$ with vertex $1$ to a non-tangential approach region in $\Omega$ with vertex $p$, and the same
holds true for $\widetilde{\varphi}$. Note also that $\widetilde{\varphi}'(1)=\varphi'(1)$,  we then deduce from  \eqref{ineq:asy-oeder} and  \eqref{der-dual}  (as well as the fact that $\varphi'$, $\widetilde{\varphi}'\in C^{\alpha}(\overline{\Delta})$) that
\begin{equation*}
\begin{split}
\big|(({\rm gard
}\,\varrho)\circ\widetilde{\varphi})'(\zeta)-({\varphi}^{\ast})'(\zeta)\big|
\leq&\, C_{\alpha,
\,\beta}\Big(|\varphi(\zeta)-p|^{\alpha}+|\widetilde{\varphi}(\zeta)-p|^{\alpha}
+|\varphi'(\zeta)-\widetilde{\varphi}'(\zeta)|\Big)\\
\leq&\, C_{\alpha, \,\beta}|\zeta-1|^{\alpha}
\end{split}
\end{equation*}
for all $\zeta\in\mathcal{R}_{\beta}$. Now \eqref{ineq:estimate00} follows immediately,  since $(\varphi^{\ast})^{\prime}(1)=(\widetilde{\varphi}^{\ast})^{\prime}(1)$ (see \eqref{eq:equi-der}) and $(\varphi^{\ast})'$, $(\widetilde{\varphi}^{\ast})'\in C^{\alpha}(\overline{\Delta})$.
Then in view of \eqref{eq:contact01} and \eqref{ineq:estimate00}, we see that
\begin{equation}\label{ineq:estimate01}
\begin{split}
\big|({\rm gard }\,\varrho)\circ\widetilde{\varphi}(\zeta)-\widetilde{\varphi}^{\ast}(\zeta)\big|
\leq&\,|\zeta-1|\int_{0}^{1}\big|(({\rm gard }\,\varrho)\circ\widetilde{\varphi}
-\widetilde{\varphi}^{\ast})'(t\zeta+(1-t))\big|dt\\
\leq&\, C_{\alpha, \,\beta}|\zeta-1|^{\alpha+1}
\end{split}
\end{equation}
for all $\zeta\in\mathcal{R}_{\beta}$. Similarly, we also have
\begin{equation}\label{ineq:estimate02}
\max\Big\{|\varrho\circ\widetilde{\varphi}(\zeta)-\zeta|,\,
\big|\widetilde{\varphi}(\zeta)-\varphi\circ\varrho\circ\widetilde{\varphi}(\zeta)\big|, \,
\big|\widetilde{\varphi}^{\ast}(\zeta)-\varphi^{\ast}\circ\varrho\circ\widetilde{\varphi}(\zeta)\big|
\Big\}\leq C_{\alpha}|\zeta-1|^{\alpha+1}
\end{equation}
for all $\zeta\in\Delta$. Now recall that $\widetilde{\varphi}'\in \mathcal{O}(\Delta)\cap C^{\alpha}(\overline{\Delta})$, the classical Hardy-Littlewood theorem implies that
$$
\sup_{\zeta\in\Delta}(1-|\zeta|)^{1-\alpha}|\widetilde{\varphi}''(\zeta)|<\infty.
$$
Combining this with \eqref{ineq:estimate01}, we then deduce that for
every $\alpha\in(0, 1)$ and every $\beta>1$,
\begin{equation}\label{ineq:estimate03}
\big|\langle \widetilde{\varphi}''(\zeta),\, \overline{({\rm gard }\,\varrho)\circ\widetilde{\varphi}(\zeta)}\rangle
-\langle \widetilde{\varphi}''(\zeta),\, \overline{\widetilde{\varphi}^{\ast}(\zeta)}\rangle\big|
\leq C_{\alpha, \,\beta}|\zeta-1|^{2\alpha}
\end{equation}
as $\zeta\in\mathcal{R}_{\beta}$. Now we deal with the first term in equality \eqref{eq:second der}. A straightforward calculation using \eqref{eq:second-der-leftinv} shows that for every $\zeta\in\Delta$,
\begin{equation*}
\begin{split}
\sum_{j,\, k=1}^{n}
&\frac{\partial^2 \varrho}{\partial z_j\partial z_k}\circ\widetilde{\varphi}(\zeta)\widetilde{\varphi}_j'(\zeta)\widetilde{\varphi}_k'(\zeta)
=\frac{\langle\widetilde{\varphi}'(\zeta),\, \overline{(\varphi^{\ast})'\circ\varrho\circ\widetilde{\varphi}(\zeta)}\rangle
\langle \widetilde{\varphi}'(\zeta),\, \overline{({\rm gard }\,\varrho)\circ\widetilde{\varphi}(\zeta)}\rangle}
{1-\langle \widetilde{\varphi}(\zeta)-\varphi\circ\varrho\circ\widetilde{\varphi}(\zeta),\, \overline{(\varphi^{\ast})'\circ\varrho\circ\widetilde{\varphi}(\zeta)}\rangle}
\\
+&
\frac{\big\langle\widetilde{\varphi}'(\zeta),\, \overline{\varphi^{\ast}\circ\varrho\circ\widetilde{\varphi}(\zeta)}\big\rangle}
{\big(1-\langle \widetilde{\varphi}(\zeta)-\varphi\circ\varrho\circ\widetilde{\varphi}(\zeta),\, \overline{(\varphi^{\ast})'\circ\varrho\circ\widetilde{\varphi}(\zeta)}\rangle\big)^2}
\bigg(
\langle\widetilde{\varphi}'(\zeta), \overline{({\rm gard }\,\varrho)\circ\widetilde{\varphi}(\zeta)}\rangle
\big\langle\widetilde{\varphi}(\zeta)-
\\
&\varphi\circ\varrho\circ\widetilde{\varphi}(\zeta),\,   \overline{(\varphi^{\ast})''\circ\varrho\circ\widetilde{\varphi}(\zeta)}\big\rangle
+
\Big\langle
\widetilde{\varphi}'(\zeta)-\langle \widetilde{\varphi}'(\zeta),\, \overline{({\rm gard }\,\varrho)\circ\widetilde{\varphi}(\zeta)}\rangle
\varphi'\circ\varrho\circ\widetilde{\varphi}(\zeta),\,
\\
&\overline{(\varphi^{\ast})'\circ\varrho\circ \widetilde{\varphi}(\zeta)}
\Big\rangle
\bigg)=:I(\zeta)+II(\zeta).
\end{split}
\end{equation*}
Now in view of \eqref{ineq:nonvanishing}, \eqref{ineq:estimate01}
and \eqref{ineq:estimate02}, and also noticing the arbitrariness of
$\alpha\in (0, 1)$,  it holds that
\begin{eqnarray}\label{ineq:estimate04}
\begin{split}
\big|I(\zeta)-\langle \widetilde{\varphi}'(\zeta),\, \overline{(\varphi^{\ast})'(\zeta)}\rangle\big|
\leq&\, C
\Big(
\big|\widetilde{\varphi}(\zeta)-\varphi\circ\varrho\circ\widetilde{\varphi}(\zeta)\big|
+\big|({\rm gard }\,\varrho)\circ\widetilde{\varphi}(\zeta)-\widetilde{\varphi}^{\ast}(\zeta)\big|
\\
&+\big|(\varphi^{\ast})'\circ\varrho\circ\widetilde{\varphi}(\zeta)-(\varphi^{\ast})'(\zeta)\big|
\Big)
\\
\leq&\, C_{\alpha, \,\beta}
\Big(|\zeta-1|^{\alpha+1}+|\varrho\circ\widetilde{\varphi}(\zeta)-\zeta|^{\frac{2\alpha}{\alpha+1}}\Big)
\\
\leq&\, C_{\alpha, \,\beta} |\zeta-1|^{2\alpha}
\end{split}
\end{eqnarray}
for all $\zeta\in\mathcal{R}_{\beta}$. We next estimate the function $II$. Indeed, a simple manipulation using \eqref{ineq:nonvanishing}, \eqref{ineq:estimate01} and \eqref{ineq:estimate02} again yields that
\begin{equation}\label{ineq:estimate05}
\begin{split}
\big|II(&\zeta)-\langle \widetilde{\varphi}'(\zeta)
-\varphi'\circ\varrho\circ\widetilde{\varphi}(\zeta), \, \overline{(\varphi^{\ast})'\circ\varrho\circ\widetilde{\varphi}(\zeta)}\rangle\big|\\
\leq&\,C
\Big(
\big|\widetilde{\varphi}(\zeta)-\varphi\circ\varrho\circ\widetilde{\varphi}(\zeta)\big|
|(\varphi^{\ast})''\circ\varrho\circ\widetilde{\varphi}(\zeta)|+\big|({\rm gard }\,\varrho)\circ\widetilde{\varphi}(\zeta)-\widetilde{\varphi}^{\ast}(\zeta)\big|
\Big)\\
&+C\big|
\langle\widetilde{\varphi}'(\zeta), \, \overline{\varphi^{\ast}\circ\varrho\circ\widetilde{\varphi}(\zeta)}\rangle
-\big(1-\langle \widetilde{\varphi}(\zeta)-\varphi\circ\varrho\circ\widetilde{\varphi}(\zeta),\, \overline{(\varphi^{\ast})'\circ\varrho\circ\widetilde{\varphi}(\zeta)}\rangle\big)^2
\big|\\
\leq&\,C
\Big(
\big|\widetilde{\varphi}(\zeta)-\varphi\circ\varrho\circ\widetilde{\varphi}(\zeta)\big|
|(\varphi^{\ast})''\circ\varrho\circ\widetilde{\varphi}(\zeta)|+\big|({\rm gard }\,\varrho)\circ\widetilde{\varphi}(\zeta)-\widetilde{\varphi}^{\ast}(\zeta)\big|
\Big)\\
&+C
\Big(
\big|\varphi^{\ast}\circ\varrho\circ\widetilde{\varphi}(\zeta)-\widetilde{\varphi}^{\ast}(\zeta)\big|
+\big|\widetilde{\varphi}(\zeta)-\varphi\circ\varrho\circ\widetilde{\varphi}(\zeta)\big|
\Big)\\
\leq&\, C_{\alpha, \,\beta}
\big(
|\zeta-1|^{\alpha+1}(1-|\varrho\circ\widetilde{\varphi}(\zeta)|)^{\alpha-1}+|\zeta-1|^{\alpha+1}
\big)\\
\leq&\, C_{\alpha, \,\beta} |\zeta-1|^{2\alpha}
\end{split}
\end{equation}
for all $\zeta\in\mathcal{R}_{\beta}$. Here the penultimate inequality follows by applying the classical Hardy-Littlewood theorem to $(\varphi^{\ast})'\in \mathcal{O}(\Delta)\cap C^{\alpha}(\overline{\Delta})$, and the last one follows by noting that
$$
\lim_{\mathcal{R}_{\beta}\ni\zeta\to 1}
\frac{1-|\varrho\circ\widetilde{\varphi}(\zeta)|}{1-|\zeta|}=(\varrho\circ\widetilde{\varphi})'(1)=1,
$$
in view of the Julia-Wolff-Cararth\'{e}odory theorem. For every $\alpha\in(0, 1)$, we also have
\begin{equation*}
\begin{split}
\big|\langle \widetilde{\varphi}'&(\zeta)-\varphi'(\zeta),\, \overline{(\varphi^{\ast})'(\zeta)}\rangle
-\langle \widetilde{\varphi}'(\zeta)-\varphi'\circ\varrho\circ\widetilde{\varphi}(\zeta),\, \overline{(\varphi^{\ast})'\circ\varrho\circ\widetilde{\varphi}(\zeta)}\rangle\big|\\
\leq&\,
\big|\langle
\widetilde{\varphi}'(\zeta)-\varphi'(\zeta),\, \overline{(\varphi^{\ast})'(\zeta)-(\varphi^{\ast})'\circ\varrho\circ\widetilde{\varphi}(\zeta)}
\rangle\big|\\
&+\big|\langle
\varphi'\circ\varrho\circ\widetilde{\varphi}(\zeta)-\varphi'(\zeta),\,
\overline{(\varphi^{\ast})'\circ\varrho\circ\widetilde{\varphi}(\zeta)}
\rangle\big|
\\
\leq&\, C_{\alpha}|\varrho\circ\widetilde{\varphi}(\zeta)-\zeta|^{\frac{2\alpha}{\alpha+1}}
\\
\leq&\, C_{\alpha} |\zeta-1|^{2\alpha}
\end{split}
\end{equation*}
for all $\zeta\in\Delta$. Now combining this with \eqref{ineq:estimate05} yields that
\begin{equation}\label{ineq:estimate06}
\big|II(\zeta)-\langle \widetilde{\varphi}'(\zeta)-\varphi'(\zeta),\, \overline{(\varphi^{\ast})'(\zeta)}\rangle\big|\leq C_{\alpha, \,\beta} |\zeta-1|^{2\alpha}
\end{equation}
for all $\zeta\in\mathcal{R}_{\beta}$. On the other hand, taking into account that
$$
\langle \widetilde{\varphi}',\, \overline{(\widetilde{\varphi}^{\ast})'}\rangle+
\langle \widetilde{\varphi}'',\, \overline{\widetilde{\varphi}^{\ast}}\rangle
=\langle \widetilde{\varphi}',\, \overline{\widetilde{\varphi}^{\ast}}\rangle'
=0
$$
on $\Delta$, we can rephrase \eqref{ineq:estimate03} as
\begin{equation}\label{ineq:estimate07}
\big|\langle \widetilde{\varphi}''(\zeta),\, \overline{({\rm gard }\,\varrho)\circ\widetilde{\varphi}(\zeta)}\rangle
+\langle \widetilde{\varphi}'(\zeta),\, \overline{(\widetilde{\varphi}^{\ast})'(\zeta)}\rangle\big|
\leq C_{\alpha, \,\beta}|\zeta-1|^{2\alpha}
\end{equation}
for all $\zeta\in\mathcal{R}_{\beta}$. Putting \eqref{eq:second der}, \eqref{ineq:estimate04}, \eqref{ineq:estimate06}, and \eqref{ineq:estimate07} together, we then conclude that for every $\alpha\in(0, 1)$ and every $\beta>1$,
$$
\big|
(\varrho\circ\widetilde{\varphi})''(\zeta)-\langle \widetilde{\varphi}'(\zeta)-\varphi'(\zeta),\, \overline{(\varphi^{\ast})'(\zeta)}\rangle-\langle \widetilde{\varphi}'(\zeta),\, \overline{(\varphi^{\ast})'(\zeta)-(\widetilde{\varphi}^{\ast})'(\zeta)}\rangle
\big|
\leq C_{\alpha, \,\beta}|\zeta-1|^{2\alpha}
$$
as $\zeta\in\mathcal{R}_{\beta}$. By symmetry, we also have for every $\alpha\in(0, 1)$ and every $\beta>1$,
$$
\big|
(\widetilde{\varrho}\circ\varphi)''(\zeta)-\langle\varphi'(\zeta)-\widetilde{\varphi}'(\zeta),\, \overline{(\widetilde{\varphi}^{\ast})'(\zeta)}\rangle-\langle\varphi'(\zeta),\, \overline{(\widetilde{\varphi}^{\ast})'(\zeta)-(\varphi^{\ast})'(\zeta)}\rangle
\big|
\leq C_{\alpha, \,\beta}|\zeta-1|^{2\alpha}
$$
as $\zeta\in\mathcal{R}_{\beta}$. Now adding the preceding two inequalities together yields that
\begin{equation}\label{ineq:key point of uniqueness01}
\big|(\varrho\circ\widetilde{\varphi}+\widetilde{\varrho}\circ\varphi)''(\zeta)
+2\langle\varphi'(\zeta)-\widetilde{\varphi}'(\zeta),\, \overline{(\varphi^{\ast})'(\zeta)-(\widetilde{\varphi}^{\ast})'(\zeta)}\rangle\big|\leq C_{\alpha, \,\beta}|\zeta-1|^{2\alpha}
\end{equation}
for all $\zeta\in\mathcal{R}_{\beta}$. Also, it is evident that
$$
\big|\langle\varphi'(\zeta)-\widetilde{\varphi}'(\zeta),\, \overline{(\varphi^{\ast})'(\zeta)-(\widetilde{\varphi}^{\ast})'(\zeta)}\rangle\big|\leq C_{\alpha}|\zeta-1|^{2\alpha}
$$
for all $\zeta\in\Delta$ and $\alpha\in(0, 1)$. Together with \eqref{ineq:key point of uniqueness01}, this further implies that for every $\alpha\in(0, 1)$ and every $\beta>1$,
$$
|(\varrho\circ\widetilde{\varphi}+\widetilde{\varrho}\circ\varphi)''(\zeta)|
\leq C_{\alpha, \,\beta}|\zeta-1|^{2\alpha}
$$
as $\zeta\in\mathcal{R}_{\beta}$. Now the desired claim \eqref{eq:key point of uniqueness} follows immediately.

By using the Cauchy integral formula, we can conclude from \eqref{eq:key point of uniqueness} that
$$
(\varrho\circ\widetilde{\varphi}+\widetilde{\varrho}\circ\varphi)'''(\zeta)\to 0
$$
as $\zeta\to 1$ non-tangentially. Then by Theorem \ref{thm:effectiveness of BK}, we have
\begin{equation}\label{eq:key point of uniqueness01}
\frac12(\varrho\circ\widetilde{\varphi}+\widetilde{\varrho}\circ\varphi)={\rm Id}_{\Delta}.
\end{equation}
We now prove that $\widetilde{\varphi}=\varphi$. Recall that $(\widetilde{\varphi}^{\ast})'(1)=(\varphi^{\ast})'(1)$ (see \eqref{eq:equi-der}), an argument completely analogous to the one at the beginning of the proof indicates that it is sufficient to show that $\widetilde{\varphi}(\overline{\Delta})=\varphi(\overline{\Delta})$. As usual, we argue by contradiction. If this were not the case, it would follow that
\begin{equation}\label{eq: key contradiction}
\widetilde{\varphi}(\overline{\Delta})\cap\varphi(\overline{\Delta})=\{p\}.
\end{equation}
On the other hand, it follows from Proposition \ref{prop:regularity of leftinv} that $\varrho(\overline{\Omega}\setminus\varphi(\partial\Delta))\subset\Delta$ and $\widetilde{\varrho}(\overline{\Omega}\setminus\widetilde{\varphi}(\partial\Delta))\subset\Delta$.
Now combining this with \eqref{eq: key contradiction}, we see that
$$
\frac12(\varrho\circ\widetilde{\varphi}+\widetilde{\varrho}\circ\varphi)(\overline{\Delta}\setminus\{1\})\subset\Delta,
$$
which contradicts \eqref{eq:key point of uniqueness01}. This completes the proof of the first statement part of the theorem. The second one follows easily from a similar argument as in the very beginning of the proof.
\end{proof}

\begin{remark}
Although the existence part of Theorem \ref{mainthm1} requires $\partial\Omega$ to be at least $C^3$-smooth as indicated by \cite{Huang-Pisa94}, the preceding argument implies that  for the uniqueness part: when $\Omega$ is strongly  convex, the $C^{2,\, \alpha}$-regularity of $\partial\Omega$ is enough  whenever $\alpha\in(1/2, 1)$; when $\Omega$ is only strongly linearly convex,  the $C^{2,\, \alpha}$-regularity  for $\partial\Omega$ is also enough whenever $\alpha\in((\sqrt 5-1)/2, 1)$. In the latter case, a slight modification of the preceding argument is necessary.
\end{remark}

\begin{remark}
The uniqueness result for complex geodesics of bounded strongly convex domains in $\Omega\subset\mathbb C^n\, (n>1)$ with $C^3$-smooth boundary was also stated in \cite[Lemma 2.7]{Gaussier-Seshadri}. However, the proof
given there seems to us to be  incorrect. In fact, by carefully checking that proof, one can see that what the authors of \cite{Gaussier-Seshadri} claimed is essentially the following (with the notation fixed there): Let $\Omega$ be  a  bounded strongly convex domain with $C^3$-smooth boundary and let $p\in \partial \Omega$.
Let $\phi$ be a complex geodesic of $\Omega$ with $\phi(1)=p$ and $\psi$ be a holomorphic mapping from $\Delta$ into $\Omega$ such that $\psi(1)=p$ and $\psi'(1)=\phi'(1)$. Then $\psi=\phi$, which is obviously not true. Even in the complex geodesic case for both mappings, as we proved in this paper, they are only the same after composing an automorphism. It seems to us that they overlooked the fact that the constant $C$
(in the proof of \cite[Lemma 2.7]{Gaussier-Seshadri}) goes to zero, instead of being uniformly bounded below by a positive constant, as the parameter $\eta\in\Delta$ tends to $1$. Indeed, just simply taking $\Omega=\Delta$, one can compute directly this constant and find out it goes to zero as $\eta\to 1$.
\end{remark}

\section{A new boundary spherical representation}\label{subsect:spherical rep}
Let $\Omega\subset\mathbb C^n\, (n>1)$ be a bounded strongly linearly convex domain with $C^3$-smooth boundary. Let $p\in\partial\Omega$ and $\nu_p$ be the unit outward normal  to $\partial\Omega$ at
$p$. Set
   $$
   L_p:=\big\{v\in\mathbb C^n\!: |v|=1,\ \langle v, \nu_p\rangle>0\big\}
   $$
and let $v\in L_p$. Then by  Theorem \ref{mainthm1}, we see that there exists a {\it unique} complex geodesic $\varphi_v$ of $\Omega$ such that $\varphi_v(1)=p$, $\varphi_v'(1)=\langle v, \nu_p\rangle v$ and
   $$
   \left.\frac{d}{d\theta}\right|_{\theta=0}|\varphi^{\ast}_v(e^{i\theta})|=0.
   $$
Here as before, $\varphi^{\ast}_v$ is the  dual mapping of $\varphi_v$. In what follows, we will refer to such a $\varphi_v$ as {\it the preferred complex geodesic of $\Omega$ associated to $v$}.

Up to a unitary transformation on $\mathbb C^n$, which does not change the strong  linear convexity of $\Omega$, we may assume that $\nu_p=e_1=(1,0,\ldots, 0)$ and thus $L_p$ is given by
   $$
   L_p=\big\{v\in\mathbb C^n\!: |v|=1,\ \langle v, e_1\rangle >0 \big\}.
   $$
Now it is easy to verify that for every $v\in L_p$, the mapping
   $$
   \eta_v\!:\Delta \ni \zeta \mapsto e_1+(\zeta-1)\langle v, e_1\rangle v
   $$
is the preferred complex geodesic of the open unit ball $\mathbb B^n\subset\mathbb C^n$ associated to $v$, since a straightforward calculation shows that
   $$
   \eta^{\ast}_v(\zeta)=\frac{\zeta e_1+(1-\zeta)\langle v, e_1\rangle \bar{v}}{\langle v, e_1\rangle^2}
   $$
and thus $|\eta^{\ast}_v|=1/\langle v, e_1\rangle^2$ on $\partial\Delta$. Also for every $z\in\overline{\Omega}\setminus\{p\}$, there exists a {\it unique} complex geodesic disc in $\Omega$ whose closure contains $z$ and $p$; see \cite[Theorem 1]{Chang-Hu-Lee88}. We can then appropriately parameterize this complex geodesic disc such that it is given by the image of the preferred complex geodesic $\varphi_{v_z}$ of $\Omega$ associated to a {\it unique} $v_z\in L_p$, in view of the proof of Theorem \ref{mainthm1} and Remark \ref{rmk:existence of geodesics}. This leads us to consider the mapping $\Psi_p\!:
\overline{\Omega}\to \overline{\mathbb B}^n$ defined by setting $\Psi_p(p)=e_1$, and
   $$
   \Psi_p(z)=e_1+(\zeta_z-1)\langle v_z, e_1\rangle v_z,\quad z\in\overline{\Omega}\setminus\{p\},
   $$
where $\zeta_z:=\varphi_{v_z}^{-1}(z)$. Clearly, $\Psi_p$ is a bijection with inverse $\Psi_p^{-1}$ given by $\Psi_p^{-1}(e_1)=p$, and
   $$
   \Psi_p^{-1}(w)=\varphi_{v_w}(\zeta_w),\quad w\in\overline{\mathbb B}^n\setminus\{e_1\},
   $$
where $(v_w,\,\zeta_w)\in L_p\times\overline{\Delta}$ is the unique data such that $\eta_{v_w}(\zeta_w)=w$; more explicitly,
   $$
   v_w=-\frac{1-\langle e_1, w\rangle}{|1-\langle e_1, w\rangle|}\frac{w-e_1}{|w-e_1|},
   \quad \zeta_w=1-\frac{|w-e_1|^2}{|1-\langle e_1, w\rangle|^2}(1-\langle w, e_1\rangle).
   $$
Moreover, we can prove the following

\begin{theorem}\label{thm:continuity}
Let $\Omega\subset\mathbb C^n\, (n>1)$ be a bounded strongly linearly convex domain with $C^3$-smooth boundary and let $p\in\partial\Omega$. Then
\begin{enumerate}[leftmargin=2.0pc, parsep=4pt]
\item  [{\rm (i)}]
    For every $\alpha\in (0, 1/2)$, the mapping
       $$
       L_p\ni v\mapsto \varphi_v\in C^{1,\,\alpha}(\overline{\Delta})
       $$
    is continuous, and so is
       $$
       L_p\ni v\mapsto \varphi^{\ast}_v\in C^{1,\,\alpha}(\overline{\Delta}).
       $$
\item [{\rm (ii)}]
    Both $\Psi_p$ and $\Psi_p^{-1}$ are continuous so that they are homeomorphisms.
\end{enumerate}
\end{theorem}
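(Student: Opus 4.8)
The plan is to fix $v_0\in L_p$, take $v_k\to v_0$ (so $\langle v_k,\nu_p\rangle\ge\delta>0$ for large $k$), and show $\varphi_{v_k}\to\varphi_{v_0}$ and $\varphi^\ast_{v_k}\to\varphi^\ast_{v_0}$ in $C^{1,\alpha}(\overline\Delta)$ by extracting subsequential limits and pinning them down with Theorem~\ref{mainthm1}. The enabling input is that Lempert's a priori estimates \cite{Lempert81,Lempert84,Huang-Pisa94} are uniform over the family of complex geodesics in play here --- those with value $p$ and derivative in a fixed compact subset of $\mathbb C^n\setminus T_p^{1,0}\partial\Omega$ --- so for each $\alpha'\in(0,1)$ there is $M=M(\alpha',\delta)$ with $\|\varphi_{v_k}\|_{C^{1,\alpha'}(\overline\Delta)}+\|\varphi^\ast_{v_k}\|_{C^{1,\alpha'}(\overline\Delta)}\le M$. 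Taking $\alpha'\in(\alpha,1)$ and using the compact embedding $C^{1,\alpha'}(\overline\Delta)\hookrightarrow C^{1,\alpha}(\overline\Delta)$, every subsequence then has a further subsequence along which $\varphi_{v_k}\to\psi$ and $\varphi^\ast_{v_k}\to\chi$ in $C^{1,\alpha}(\overline\Delta)$.

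\textbf{Identifying the limit.} Next I would check that $(\psi,\chi)=(\varphi_{v_0},\varphi^\ast_{v_0})$. The map $\psi$ is holomorphic and non-constant (as $\psi'(1)=\langle v_0,\nu_p\rangle v_0\ne0$), and $\psi(\Delta)\subset\Omega$: an interior point mapped to $\partial\Omega$ would, via a local holomorphic peak function of the strongly pseudoconvex $\Omega$, force $\psi$ constant. Since $k_\Omega$ is continuous on $\Omega\times\Omega$, the isometry identity $k_\Omega(\varphi_{v_k}(\zeta_1),\varphi_{v_k}(\zeta_2))=k_\Delta(\zeta_1,\zeta_2)$ passes to the limit, so $\psi$ is a complex geodesic, with $\psi(1)=p$ and $\psi'(1)=\langle v_0,\nu_p\rangle v_0$. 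Passing to the limit in $\langle\varphi_{v_k}',\overline{\varphi^\ast_{v_k}}\rangle\equiv1$ gives $\langle\psi',\overline\chi\rangle\equiv1$ on $\overline\Delta$, hence $\chi$ is nowhere zero; and since $\varphi^\ast_{v_k}|_{\partial\Delta}(\zeta)=\zeta\,|\varphi^\ast_{v_k}(\zeta)|\,\overline{\nu\circ\varphi_{v_k}(\zeta)}$, the limit reads $\chi|_{\partial\Delta}(\zeta)=\zeta\,|\chi(\zeta)|\,\overline{\nu\circ\psi(\zeta)}$ with $|\chi|>0$, so $\chi=\psi^\ast$ by uniqueness of the (normalized) dual mapping. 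As $\varphi^\ast_{v_k}(1)\to\chi(1)\ne0$ and the convergence is $C^1$, the scalar functions $\theta\mapsto|\varphi^\ast_{v_k}(e^{i\theta})|$ converge in $C^1$ near $\theta=0$, giving $\frac{d}{d\theta}\big|_{\theta=0}|\chi(e^{i\theta})|=0$. Thus $\psi$ meets all conditions of Theorem~\ref{mainthm1}, so $\psi=\varphi_{v_0}$ and $\chi=\varphi^\ast_{v_0}$. As the subsequential limit is always the same, the full sequences converge, which proves (i) --- in fact for every $\alpha\in(0,1)$.

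\textbf{Part (ii).} Since $\overline{\mathbb B}^n$ is compact and $\overline\Omega$ is Hausdorff, it suffices to show the bijection $\Psi_p^{-1}$ is continuous; then $\Psi_p=(\Psi_p^{-1})^{-1}$ is automatically continuous. On $\overline{\mathbb B}^n\setminus\{e_1\}$ the displayed formulas make $w\mapsto(v_w,\zeta_w)\in L_p\times\overline\Delta$ continuous (their denominators never vanish there), and $(v,\zeta)\mapsto\varphi_v(\zeta)$ is jointly continuous on $L_p\times\overline\Delta$ by (i); hence $\Psi_p^{-1}=\varphi_{v_w}(\zeta_w)$ is continuous off $e_1$. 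For continuity at $e_1$ I must show $\varphi_{v_w}(\zeta_w)\to p$ as $w\to e_1$. From $w=\eta_{v_w}(\zeta_w)$ one gets $1-\langle w,e_1\rangle=(1-\zeta_w)\langle v_w,e_1\rangle^2$, i.e. $|1-\zeta_w|=|1-\langle w,e_1\rangle|\,\langle v_w,e_1\rangle^{-2}$. Given $w_k\to e_1$, pass to a subsequence with $\langle v_{w_k},e_1\rangle\to\ell$. If $\ell>0$ then $\zeta_{w_k}\to1$, and after a further subsequence $v_{w_k}\to v_\infty$ with $\langle v_\infty,\nu_p\rangle=\ell>0$; by (i), $\varphi_{v_{w_k}}\to\varphi_{v_\infty}$ in $C^0(\overline\Delta)$, so $\varphi_{v_{w_k}}(\zeta_{w_k})\to\varphi_{v_\infty}(1)=p$. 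If $\ell=0$ then $\langle v_{w_k},\nu_p\rangle\to0$, and here I invoke the key fact that $\operatorname{diam}\varphi_v(\overline\Delta)\to0$ as $\langle v,\nu_p\rangle\to0^+$ with $v\in L_p$; since $p\in\varphi_{v_{w_k}}(\overline\Delta)$, again $\varphi_{v_{w_k}}(\zeta_{w_k})\to p$. Hence $\Psi_p^{-1}$ extends continuously across $e_1$ by the value $p=\Psi_p^{-1}(e_1)$, completing the proof.

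\textbf{The main obstacle.} Two uniformity statements are the genuine work. The first is that Lempert's estimates are uniform over the relevant family of complex geodesics; this underlies the compactness step and is implicit in the constructions of the interior and (higher-regularity) boundary foliations. The second is the quantitative collapse $\operatorname{diam}\varphi_v(\overline\Delta)\to0$ as the direction $v$ tends to the complex tangent space $T_p^{1,0}\partial\Omega$: it is transparent for $\mathbb B^n$, where $\operatorname{diam}\eta_v(\overline\Delta)=2\langle v,\nu_p\rangle$, and in general can be obtained either from the uniform a priori estimates or by a normal-families argument --- a degenerating sequence of geodesics converges locally uniformly on $\Delta$ to the constant $p$ (seen by applying Julia's lemma to $\ell\circ\varphi_v$ for a local holomorphic supporting function $\ell$ of $\Omega$ at $p$), which a uniform boundary H\"older bound then upgrades to uniform convergence on $\overline\Delta$.
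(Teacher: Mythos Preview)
Your Part (i) has a genuine gap at the very start: the uniform $C^{1,\alpha'}$ bound you assert for the family $\{\varphi_{v_k}\}$ is precisely the hard part, and it does \emph{not} follow from boundary data alone. The available Lempert-type uniform estimate (the paper's Lemma~\ref{lem:uniform extimate}) requires the hypothesis $\{\varphi(0):\varphi\in\mathcal F\}\subset\subset\Omega$. Since the preferred normalization is a condition imposed at the boundary point $\zeta=1$, there is no a priori reason the centers $\varphi_{v_k}(0)$ stay away from $\partial\Omega$; indeed, among all geodesics with $\varphi(1)=p$ and $\varphi'(1)=v$ the parabolic reparametrizations $\varphi\circ\sigma_t$ send the center to $\partial\Omega$ as $|t|\to\infty$, and one checks directly that $\|\sigma_t'\|_{C^{1/2}}\to\infty$, so the $C^{1,1/2}$ norms of that family blow up. The paper therefore has to \emph{prove} that $\{\varphi_{v_k}(0)\}\subset\subset\Omega$, and this is the main claim of the proof: one reparametrizes each $\varphi_{v_k}$ to a normalized geodesic $\varphi_{v_k}\circ\sigma_k$, applies the uniform estimate to that family (whose centers are safely interior by \cite{Chang-Hu-Lee88}), and then uses the preferred condition $\frac{d}{d\theta}\big|_{\theta=0}|\varphi^\ast_{v_k}(e^{i\theta})|=0$ together with a calculation of $\frac{d}{d\theta}\big|_{\theta=0}|\sigma_k'(e^{i\theta})|$ to pin the $\sigma_k$ inside a compact subset of ${\rm Aut}(\Delta)$. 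This step is where the specific choice of normalization earns its keep, and your write-up bypasses it entirely.

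A secondary point: your claim that (i) holds for every $\alpha\in(0,1)$ outruns what is available. The paper's Lemma~\ref{lem:uniform extimate} delivers only a uniform $C^{1,1/2}$ bound (this is what \cite[Lemma 4]{Huang-Illinois94} provides under $C^3$ regularity of $\partial\Omega$), so after the compact embedding one gets convergence in $C^{1,\alpha}$ only for $\alpha<1/2$, matching the statement.

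Your Part (ii) is fine and is a mild variant of the paper's route: you prove continuity of $\Psi_p^{-1}$ directly and invoke the compact--Hausdorff homeomorphism fact, whereas the paper proves continuity of $\Psi_p$ directly (and mentions the same topological fact for the inverse). Your dichotomy on $\ell=\lim\langle v_{w_k},e_1\rangle$ is correct, and the $\ell=0$ case is exactly the diameter-collapse principle of \cite[Theorem~1/Corollary~1]{Huang-Illinois94}, which the paper also invokes.
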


For the proof of the above theorem, we need the following
\begin{lemma}\label{lem:uniform extimate}
Let $\Omega\subset\mathbb C^n\, (n>1)$ be a bounded strongly linearly convex domain with $C^3$-smooth boundary, and $\mathcal{F}\subset \mathcal{O}(\Delta,\,\Omega)$ a family of complex geodesics of $\Omega$ such that
$$
\{\varphi(0)\!: \varphi\in\mathcal{F}\}\subset\subset\Omega.
$$
Then
$$
\sup_{\varphi\in\mathcal{F}}\big(\|\varphi\|_{C^{1,\,1/2}(\overline{\Delta})} +\|\varphi^{\ast}\|_{C^{1,\,1/2}(\overline{\Delta})}\big)<\infty,
$$
where for every $\varphi$, $\varphi^{\ast}$ denotes its dual mapping as before.
\end{lemma}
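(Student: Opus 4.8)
The plan is to reduce the statement to a uniform estimate over a compact parameter set and then invoke the a priori bounds underlying Lempert's boundary regularity theory. Let $K\subset\Omega$ be the closure of $\{\varphi(0)\colon\varphi\in\mathcal F\}$, which is compact by hypothesis, and put $S:=\{u\in\mathbb C^n\colon|u|=1\}$. By Lempert's interior existence and uniqueness theorem, for each $(z,u)\in K\times S$ there is a unique complex geodesic $g_{z,u}$ of $\Omega$ with $g_{z,u}(0)=z$ and $g_{z,u}'(0)=u/\kappa_\Omega(z,u)$, and every $\varphi\in\mathcal F$ equals $g_{z,u}$ for $z=\varphi(0)$ and $u=\varphi'(0)/|\varphi'(0)|$ (a complex geodesic always satisfies $\kappa_\Omega(\varphi(0),\varphi'(0))=1$, and $\kappa_\Omega$ is homogeneous in the second slot). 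Since $\Omega$ is bounded, $\|\varphi\|_{C^0(\overline\Delta)}$ is at most the diameter of $\Omega$; and since $K\subset\subset\Omega$ and $\kappa_\Omega(z,v)\le|v|/\operatorname{dist}(z,\partial\Omega)$, we have $|\varphi'(0)|\ge\operatorname{dist}(K,\partial\Omega)>0$, so $|\varphi'(0)|$ lies in a fixed positive interval — all uniformly over $\mathcal F$. It therefore suffices to bound $\|g_{z,u}\|_{C^{1,1/2}(\overline\Delta)}+\|g_{z,u}^{\ast}\|_{C^{1,1/2}(\overline\Delta)}$ uniformly for $(z,u)$ in the compact set $K\times S$.

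The next step is a single uniform a priori estimate. Fixing the $C^3$ strictly plurisubharmonic defining function $r$ for $\Omega$ used in the paper, I claim there is $c=c(K,\Omega)>0$ with $-r\circ\varphi(\zeta)\ge c\,(1-|\zeta|)$ for all $\zeta\in\Delta$ and all $\varphi\in\mathcal F$. Indeed, $\varphi$ being an isometry from $(\Delta,k_\Delta)$ to $(\Omega,k_\Omega)$ gives $k_\Omega(\varphi(0),\varphi(\zeta))=k_\Delta(0,\zeta)=\tanh^{-1}|\zeta|$, and since $\Omega$ is strongly pseudoconvex and $\varphi(0)\in K\subset\subset\Omega$, the classical comparison of the Kobayashi distance with the boundary distance on strongly pseudoconvex domains (see, e.g., \cite{Abatebook,FornaessSCV}) yields $\operatorname{dist}(\varphi(\zeta),\partial\Omega)\ge e^{-2C_K}e^{-2\tanh^{-1}|\zeta|}\ge c_1(1-|\zeta|)$, with $C_K$ uniform for base points in $K$; combining $-r\asymp\operatorname{dist}(\,\cdot\,,\partial\Omega)$ near $\partial\Omega$ with $-r$ bounded below away from $\partial\Omega$ gives the claim. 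This has two uniform consequences. First, $\varphi$ approaches the boundary at a controlled rate, $\operatorname{dist}(\varphi(\zeta),\partial\Omega)\ge c_1(1-|\zeta|)$. Second, since $-r\circ\varphi(\zeta)\ge c(1-|\zeta|)$ holds on \emph{all} of $\Delta$ and $r\circ\varphi\in C^1(\overline\Delta)$ vanishes on $\partial\Delta$, the Hopf lemma is effective: $\partial_n(r\circ\varphi)(e^{i\theta})\ge c$; hence, from the description $\varphi^{\ast}|_{\partial\Delta}(e^{i\theta})=e^{i\theta}\mu(e^{i\theta})\overline{\nu\circ\varphi(e^{i\theta})}$ of the dual mapping and the normalization $\langle\varphi',\overline{\varphi^{\ast}}\rangle\equiv1$, one computes $\mu(e^{i\theta})=|\nabla r(\varphi(e^{i\theta}))|\big/\partial_n(r\circ\varphi)(e^{i\theta})$, whence $\|\varphi^{\ast}\|_{C^0(\overline\Delta)}=\sup_\theta\mu(e^{i\theta})\le C(K,\Omega)$.

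The heart of the matter — the step I expect to be the real obstacle — is to upgrade these $C^0$-type bounds to a \emph{uniform} $C^{1,\alpha}(\overline\Delta)$ bound for $\varphi$ and $\varphi^{\ast}$, for each $\alpha\in(0,1)$ and in particular for $\alpha=1/2$. That each individual geodesic and its dual lie in $C^{1,\alpha}(\overline\Delta)$ is Lempert's boundary regularity theorem \cite{Lempert81,Lempert84} (see also \cite{Chang-Hu-Lee88,Huang-Pisa94,KW13}): the pair $(\varphi,\varphi^{\ast})$ satisfies the stationarity boundary conditions $\varphi^{\ast}|_{\partial\Delta}(\zeta)=\zeta\mu(\zeta)\overline{\nu\circ\varphi(\zeta)}$ and $\langle\varphi',\overline{\varphi^{\ast}}\rangle\equiv1$, which form an index-one nonlinear Riemann--Hilbert problem with boundary values in a totally real submanifold of $\mathbb C^{2n}$ determined by $r$, and the Schauder-type a priori estimates making this problem elliptic produce a bound of the shape $\|\varphi\|_{C^{1,\alpha}(\overline\Delta)}+\|\varphi^{\ast}\|_{C^{1,\alpha}(\overline\Delta)}\le\Phi_\alpha\big(\Omega,\|\varphi\|_{C^0(\overline\Delta)},\|\varphi^{\ast}\|_{C^0(\overline\Delta)},c\big)$, with $\Phi_\alpha$ increasing in its last three arguments and depending otherwise only on the $C^3$-norm and the strong-linear-convexity constants of $\Omega$. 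What genuinely requires care is to verify that the constants in these estimates depend only on the quantities just listed (and not otherwise on the individual geodesic); granting that, the first two steps make the right-hand side uniform over $\mathcal F$, and the lemma follows. One can alternatively reformulate by contradiction: were the supremum infinite along a sequence $\varphi_k=g_{z_k,u_k}$, then $(z_k,u_k)\to(z_0,u_0)$ along a subsequence and, by Montel's theorem together with Lempert's uniqueness (a nonconstant local-uniform limit of complex geodesics based at points of $K$ is again a complex geodesic with the same value and derivative at $0$), $\varphi_k\to g_{z_0,u_0}$ locally uniformly in $\Delta$; but converting this into $C^{1,1/2}(\overline\Delta)$-convergence, and thus reaching a contradiction, still requires a uniform $C^{1,\alpha}$ bound for some $\alpha>1/2$, so this reformulation does not bypass the Riemann--Hilbert estimate. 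I would therefore carry out the direct argument.
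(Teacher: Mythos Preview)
Your outline correctly isolates the difficulty, but it does not resolve it. You yourself flag the crux: the asserted Schauder-type bound
\[
\|\varphi\|_{C^{1,\alpha}(\overline\Delta)}+\|\varphi^{\ast}\|_{C^{1,\alpha}(\overline\Delta)}\le\Phi_\alpha\big(\Omega,\|\varphi\|_{C^0},\|\varphi^{\ast}\|_{C^0},c\big)
\]
is not a statement one can simply cite; it is precisely what has to be \emph{proved}, and tracking the constants through the full nonlinear Riemann--Hilbert machinery of \cite{Lempert81,Lempert84,KW13} is a substantial task that your proposal does not carry out. The alternative contradiction argument you sketch is, as you note, circular. So as written the proposal has a genuine gap at the only nontrivial step.

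The paper avoids this difficulty by splitting the estimate and arguing concretely. For $\sup_{\mathcal F}\|\varphi\|_{C^{1,1/2}(\overline\Delta)}<\infty$ it invokes a specific reference, \cite[Lemma~4]{Huang-Illinois94}, which already records exactly this uniform bound for geodesics with centers in a compact set. The real content is then the dual bound, and here the paper gives a direct, elementary argument rather than appealing to abstract a~priori estimates: using $\varphi^{\ast}|_{\partial\Delta}(\zeta)=\zeta\,|\varphi^{\ast}(\zeta)|\,\overline{\nu\circ\varphi(\zeta)}$ together with the already-established uniform $C^{1,1/2}$ control of $\varphi$ (hence of $\nu\circ\varphi$), the problem reduces to a uniform $C^{1,1/2}(\partial\Delta)$ bound on the positive scalar $|\varphi^{\ast}|$. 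This is obtained by a local trick of Lempert's: near each $\zeta_0\in\partial\Delta$ one picks a coordinate $e_k$ with $|\langle e_k,\nu\circ\varphi\rangle|$ bounded below, writes a local logarithm $\chi$, harmonically extends $-\mathrm{Im}\,\chi$ and takes its conjugate $\rho$ (with a uniform Privalov bound), and observes that $|\varphi^{\ast}|\exp(\mathrm{Re}\,\chi-\rho)$ is the boundary value of a function holomorphic across $\partial\Delta$, hence uniformly $C^{1,1/2}$ by Cauchy estimates. A finite cover of $\partial\Delta$ then gives the global bound. This bypasses entirely the need to verify uniformity of constants inside the Riemann--Hilbert analysis.
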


\begin{proof}
First of all, by \cite[Lemma 4]{Huang-Illinois94} (which is also valid for bounded strongly linearly convex domains in $\mathbb C^n$, in view of \cite{Lempert84, KW13}), we obtain that
\begin{equation}\label{ineq:uniform extimate1}
\sup_{\varphi\in\mathcal{F}}\|\varphi\|_{C^{1,\,1/2}(\overline{\Delta})}<\infty.
\end{equation}
So we are left to show that
\begin{equation*}
\sup_{\varphi\in\mathcal{F}}\|\varphi^{\ast}\|_{C^{1,\,1/2}(\overline{\Delta})}<\infty.
\end{equation*}
To this end, note that the standard proof of the Hardy-Littlewood theorem (see, e.g., \cite[Theorem 2.6.26]{Abatebook}) implies that the norms $\|\  \|_{C^{1,\,1/2}(\overline{\Delta})}$ and $\|\ \|_{C^{1,\,1/2}(\partial\Delta)}$ are equivalent on $\mathcal{O}(\Delta)\cap C^{1,\,1/2}(\overline{\Delta})$ (and even on ${\rm harm}(\Delta)\cap C^{1,\,1/2}(\overline{\Delta})$), we therefore need only show that
\begin{equation*}
\sup_{\varphi\in\mathcal{F}}\|\varphi^{\ast}\|_{C^{1,\,1/2}(\partial\Delta)}<\infty.
\end{equation*}
Furthermore, since $\varphi^{\ast}|_{\partial\Delta}(\zeta)=\zeta|\varphi^{\ast}(\zeta)|\overline{\nu\circ\varphi(\zeta)}$,  \eqref{ineq:uniform extimate1} can reduce the problem to
\begin{equation}\label{ineq:uniform extimate2}
\sup_{\varphi\in\mathcal{F}}\||\varphi^{\ast}|\|_{C^{1,\,1/2}(\partial\Delta)}<\infty.
\end{equation}

We follow an idea of Lempert \cite{Lempert81}. For every $\zeta_0\in\partial\Delta$ and every $\varphi\in\mathcal{F}$, we can first choose an integer $1\leq k_{\varphi,\, \zeta_0}\leq n$ such that
$$
|\langle e_{k_{\varphi,\, \zeta_0}},\, \nu\circ\varphi(\zeta_0)\rangle|\geq1\sqrt{n},
$$
and then by the equicontinuity of $\mathcal{F}$ (which follows easily from \eqref{ineq:uniform extimate1}) a small neighborhood $V_{\zeta_0}\subset\mathbb C$ (independent of $\varphi\in\mathcal{F}$) of $\zeta_0$ such that
$$
|\langle e_{k_{\varphi,\, \zeta_0}},\, \nu\circ\varphi(\zeta)\rangle|\geq1/2\sqrt{n}
$$
for all $\zeta\in \partial\Delta\cap V_{\zeta_0}$ and $\varphi\in\mathcal{F}$. We can further take
for every $\varphi\in\mathcal{F}$ a function $\chi_{\varphi,\, \zeta_0}\in C^{1,\,1/2}(\partial\Delta)$ such that
$$
\exp\circ\chi_{\varphi,\, \zeta_0}=\langle e_{k_{\varphi,\, \zeta_0}},\, \nu\circ\varphi\rangle
$$
on $\partial\Delta\cap V_{\zeta_0}$, and
\begin{equation}\label{ineq:equi-norm}
\|\chi_{\varphi,\, \zeta_0}\|_{C^{1,\,1/2}(\partial\Delta)}
\leq C_{n,\,\zeta_0}\|\nu\circ\varphi\|_{C^{1,\,1/2}(\partial\Delta)},
\end{equation}
where $C_{n,\,\zeta_0}>0$ is a constant depending only on $n$ and $\zeta_0$. Also, we can extend $-{\rm Im}\,\chi_{\varphi,\, \zeta_0}$ to a harmonic function in ${\rm harm}(\Delta)\cap C^{1,\,1/2}(\overline{\Delta})$, still denoted by $-{\rm Im}\,\chi_{\varphi,\, \zeta_0}$,  and let
$\rho_{\varphi,\, \zeta_0}\!:\Delta\to \mathbb R$ be its conjugate function such that $\rho_{\varphi,\, \zeta_0}(\zeta_0)=0$. Then by  the classical Privalov theorem (see, e.g., \cite[Proposition 6.2.10]{Baouendibook}), there exists a constant $C>0$ such that
\begin{equation}\label{ineq:Hilbert-bdd}
\|\rho_{\varphi,\, \zeta_0}\|_{C^{1,\,1/2}(\overline{\Delta})}\leq C\|\chi_{\varphi,\, \zeta_0}\|_{C^{1,\,1/2}(\partial \Delta)}
\end{equation}
for all $\varphi\in \mathcal{F}$. Now note that for every $\zeta\in \partial\Delta\cap V_{\zeta_0}$,
$$
|\varphi^{\ast}(\zeta)|\langle e_{k_{\varphi,\, \zeta_0}},\, \nu\circ\varphi(\zeta)\rangle
=\zeta^{-1}\langle e_{k_{\varphi,\, \zeta_0}},\, \overline{\varphi^{\ast}(\zeta)}\rangle,
$$
and
$$
\langle e_{k_{\varphi,\, \zeta_0}},\, \nu\circ\varphi(\zeta)\rangle \exp\circ(\rho_{\varphi,\, \zeta_0}-{\rm Re}\,\chi_{\varphi,\, \zeta_0})(\zeta)=\exp\circ(\rho_{\varphi,\, \zeta_0}+i{\rm Im}\,\chi_{\varphi,\, \zeta_0})(\zeta).
$$
This means that they can extend to holomorphic functions on $\Delta \cap V_{\zeta_0}$, then so does their quotient $|\varphi^{\ast}|\exp\circ({\rm Re}\,\chi_{\varphi,\, \zeta_0}-\rho_{\varphi,\, \zeta_0})$, which takes real values on $\partial\Delta\cap V_{\zeta_0}$, and hence can extend holomorphically across $\partial\Delta$. We denote by $f_{\varphi,\, \zeta_0}$ its holomorphic extension. Now taking into account that
$$
\sup_{\varphi\in\mathcal{F}}\|\varphi^{\ast}\|_{C(\overline{\Delta})}<\infty
$$
(see \cite{Lempert84, KW13}) and $\rho_{\varphi,\, \zeta_0}(\zeta_0)=0$, we can assume that $\{f_{\varphi,\, \zeta_0}\}_{\varphi\in\mathcal{F}}$ is uniformly bounded  by shrinking $V_{\zeta_0}$ {\it uniformly}, in view of \eqref{ineq:uniform extimate1}, \eqref{ineq:equi-norm} and \eqref{ineq:Hilbert-bdd}. Moreover, by shrinking $V_{\zeta_0}$ again, the classical Cauchy estimate allows us to conclude that
$$
\sup_{\varphi\in\mathcal{F}}\|f_{\varphi,\, \zeta_0}\|_{C^{1,\,1/2}(\partial\Delta \cap V_{\zeta_0})}< \infty.
$$
Together with \eqref{ineq:uniform extimate1}, \eqref{ineq:equi-norm} and \eqref{ineq:Hilbert-bdd}, this further implies that
$$
\sup_{\varphi\in\mathcal{F}}\||\varphi^{\ast}|\|_{C^{1,\,1/2}(\partial\Delta \cap V_{\zeta_0})}<\infty,
$$
since
$$
|\varphi^{\ast}|=f_{\varphi,\, \zeta_0}\exp\circ(\rho_{\varphi,\, \zeta_0}-{\rm Re}\,\chi_{\varphi,\, \zeta_0})
$$
on $\partial\Delta \cap V_{\zeta_0}$. Now covering $\partial \Delta$ by finitely many open sets in $\mathbb C$ (like $V_{\zeta_0}$), the Lebesgue number lemma gives \eqref{ineq:uniform extimate2}. This completes the proof.
\end{proof}

We now can prove Theorem $\ref{thm:continuity}$.

\begin{proof}[Proof of Theorem $\ref{thm:continuity}$]
Up to a unitary transformation on $\mathbb C^n$, we may assume that $\nu_p=e_1$.

{\rm (i)} Fix $\alpha\in (0, 1/2)$. Let $v_0\in L_p$ and $\{v_k\}_{k\in\mathbb N}\subset L_p$ be a sequence converging to $v_0$. It suffices to show that  $\{\varphi_{v_k}\}_{k\in\mathbb N}$ converges to $\varphi_{v_0}$ in the topology of $C^{1,\,\alpha}(\overline{\Delta})$, and $\{\varphi^{\ast}_{v_k}\}_{k\in\mathbb N}$ converges to $\varphi^{\ast}_{v_0}$ in the same topology.

First of all, since
   $$
   |\langle\varphi'_{v_k}(1),\, e_1\rangle|/|\varphi'_{v_k}(1)|=\langle v_k, e_1\rangle
   \rightarrow \langle v_0, e_1\rangle>0
   $$
as $k\to \infty$, it follows from \cite[Theorem 2]{Huang-Pisa94} that
\begin{equation}\label{ineq:diameter1}
   \inf_{k\in\mathbb N}{\rm diam} \, \varphi_{v_k} (\Delta)>0,
\end{equation}
where ${\rm diam} \, \varphi_{v_k} (\Delta)$ denotes the Euclidean diameter of $\varphi_{v_k} (\Delta)$.

\medskip
\noindent  {\bf Claim :}  $\{\varphi_{v_k}(0)\!: k\in\mathbb N\}\subset\subset\Omega$.
\medskip

Seeking a contradiction, suppose not. Then by passing to a subsequence if necessary, we may assume that
\begin{equation}\label{eq: contradiction}
   \varphi_{v_k}(0)\rightarrow \partial\Omega \quad {\rm as} \,\ k\to \infty.
\end{equation}
For  $k\in\mathbb N$, let $\zeta_k\in\Delta$ be such that $\varphi_{v_k}\circ\sigma_k$ is a normalized complex geodesic of $\Omega$, i.e.,
   $$
   {\rm dist}(\varphi_{v_k}\circ\sigma_k(0),\,\partial\Omega)
   =\max_{\zeta\in\overline{\Delta}}{\rm dist}(\varphi_{v_k}\circ\sigma_k(\zeta),\,\partial\Omega),
   $$
where $ {\rm dist}(\,\cdot\,,\partial\Omega)$ denotes the Euclidean distance  to the boundary $\partial\Omega$, and
\begin{equation}\label{eq:new paramter}
   \sigma_k(\zeta):=\frac{1-\overline{\zeta}_k}{1-\zeta_k}
   \frac{\zeta-\zeta_k}{1-\overline{\zeta}_k \zeta}\in{\rm{Aut}}(\Delta).
\end{equation}
Then by \cite[Proposition 4]{Chang-Hu-Lee88},
   $$
   \inf_{k\in\mathbb N}{\rm dist}(\varphi_{v_k}\circ\sigma_k(0),\,\partial\Omega)>0.
   $$
Thus by Lemma \ref{lem:uniform extimate}, we conclude that both $\{\varphi_{v_k}\circ\sigma_k\}_{k\in\mathbb N}$ and $\{(\varphi_{v_k}\circ\sigma_k)^{\ast}\}_{k\in\mathbb N}$ satisfy a uniform  $C^{1,\, 1/2}$-estimate. Now in light of the classical Ascoli-Arzel\`{a} theorem,  we may assume, without loss of generality,  that these two sequences converge to $\varphi_{\infty}$, $\widetilde{\varphi}_{\infty}\in
\mathcal{O}(\Delta)\cap C^{1,\, 1/2}(\overline{\Delta})$, respectively, in the topology of $C^{1,\,\alpha}(\overline{\Delta})$. Note that $\Omega$ is strongly pseudoconvex and
   $$
   {\rm diam} \, \varphi_{\infty}(\Delta)
   \geq\inf_{k\in\mathbb N}{\rm diam} \, \varphi_{v_k} (\Delta)>0
   $$
by (\ref{ineq:diameter1}), we see that $\varphi_{\infty}(\Delta)\subset\Omega$ and then by the continuity of
the Kobayashi distance, $\varphi_{\infty}$ is a complex geodesic of $\Omega$. Clearly, $\varphi_{\infty}(1)=p$ and
   $$
   \varphi'_{\infty}(1)=\lim_{k\rightarrow \infty}(\varphi_{v_k}\circ\sigma_k)'(1)
   =\lim_{k\rightarrow \infty}\sigma_k'(1)\langle v_k, e_1\rangle v_k.
   $$
Note also that $\langle v_k, e_1\rangle v_k \to \langle v_0, e_1\rangle v_0\in \mathbb C^n\setminus\{0\}$, we deduce that $\lim_{k\to\infty}\sigma_k'(1)$ exists. Moreover, since $\sigma_k'(1)>0$, it follows from the Hopf lemma (see, e.g., Remark \ref{rmk:existence of geodesics}) that
\begin{equation}\label{eq:estimate of der1}
   \lim_{k\to\infty}\sigma_k'(1)=|\varphi'_{\infty}(1)|/\langle v_0, e_1\rangle\in (0,\infty).
\end{equation}

We now proceed to show that there exists an $\varepsilon\in (0, 1)$ such that
\begin{equation}\label{eq:inclusion}
   \big\{\overline{\zeta}_k\!: k\in\mathbb N\big\}\subset
   \left\{\zeta\in\Delta\!: \varepsilon<\frac{|\zeta-1|^2}{1-|\zeta|^2}<\frac1{\varepsilon}, \
   |\zeta-1|>\varepsilon
   \right\}.
\end{equation}
In particular, this implies that the sequence $\{\sigma_k\}_{k\in\mathbb N}$ is relatively compact in ${\rm{Aut}}(\Delta)$ with respect to the compact-open topology so that we may assume that it  converges to some $\sigma_{\infty}\in{\rm{Aut}}(\Delta)$. Consequently, we see that
   $$
   \varphi_{v_k}(0)=\varphi_{v_k}\circ\sigma_k\circ\sigma^{-1}_k(0)
   \to\varphi_{\infty}\circ\sigma^{-1}_{\infty}(0)\in \Omega
   $$
as $k\to\infty$. This contradicts (\ref{eq: contradiction}), and thus the preceding claim follows.

To show the existence of $\varepsilon\in (0, 1)$ satisfying (\ref{eq:inclusion}), we make use of the fact that all $\varphi_{v_k}$'s are preferred. By the definition of dual mappings, we have
$$
\sigma'_k(\varphi_{v_k}\circ\sigma_k)^{\ast}=\varphi^{\ast}_{v_k}\circ\sigma_k
$$
on $\overline \Delta$. It then follows that
   \begin{equation*} \label{1-1}
   |\sigma'_k(1)|\left.\frac{d}{d\theta}\right|_{\theta=0}|
   (\varphi_{v_k}\circ\sigma_k)^{\ast}(e^{i\theta})|
   +\frac{|\varphi^{\ast}_{v_k}(1)|}{|\sigma'_k(1)|}
   \left.\frac{d}{d\theta}\right|_{\theta=0}|\sigma'_k(e^{i\theta})|
   =\left.\frac{d}{d\theta}\right|_{\theta=0}|\varphi^{\ast}_{v_k}\circ\sigma_k(e^{i\theta})|.
   \end{equation*}
Note that $|\varphi^{\ast}_{v_k}(1)|=1/\langle v_k, e_1\rangle^2$,
   $$
   \left.\frac{d}{d\theta}\right|_{\theta=0}|\varphi^{\ast}_{v_k}\circ\sigma_k(e^{i\theta})|
   =|\sigma'_k(1)|\left.\frac{d}{d\theta}\right|_{\theta=0}|\varphi^{\ast}_{v_k}(e^{i\theta})|
   =0,
   $$
and recall also that $\{(\varphi_{v_k}\circ\sigma_k)^{\ast}\}_{k\in\mathbb N}$ converges
to $\widetilde{\varphi}_{\infty}$ in $C^1(\overline{\Delta})$ as $k\to\infty$. We then conclude that
\begin{equation}\label{eq:estimate of der2}
   \begin{split}
      \left.\frac{d}{d\theta}\right|_{\theta=0}|\sigma'_k(e^{i\theta})|
      &=-\langle v_k, e_1\rangle^2|\sigma'_k(1)|^2
      \left.\frac{d}{d\theta}\right|_{\theta=0}|(\varphi_{v_k}\circ\sigma_k)^{\ast}(e^{i\theta})|\\
      &\to -|\varphi'_{\infty}(1)|^2
      \left.\frac{d}{d\theta}\right|_{\theta=0}|\widetilde{\varphi}_{\infty}(e^{i\theta})|.
   \end{split}
\end{equation}
Now in view of  \eqref{eq:estimate of der1},  \eqref{eq:estimate of der2}, and using the explicit formula \eqref{eq:new paramter},  we can easily find an $\varepsilon\in (0, 1)$ such that  \eqref{eq:inclusion} holds.

\smallskip
Now we are ready to check the desired continuity. This part is very similar to the proof of the preceding claim. Indeed, it follows first from Lemma \ref{lem:uniform extimate} that both $\{\varphi_{v_k}\}_{k\in\mathbb N}$ and $\{\varphi^{\ast}_{v_k}\}_{k\in\mathbb N}$ satisfy a uniform
$C^{1,\, 1/2}$-estimate so that the set of limit points of $\{\varphi_{v_k}\}_{k\in\mathbb N}$ in the topology of $C^{1,\,\alpha}(\overline{\Delta})$ is non-empty, and the same is true for the sequence $\{\varphi_{v_k}^{\ast}\}_{k\in\mathbb N}$. Therefore, we need only show that $(\varphi_{v_0},\,
\varphi^{\ast}_{v_0})$ is the only limit point of $\{(\varphi_{v_k},\, \varphi^{\ast}_{v_k})\}_{k\in\mathbb N}$ in $C^{1,\,\alpha}(\overline{\Delta})\times C^{1,\,\alpha}(\overline{\Delta})$. Without loss of generality, we assume that the sequence $\{(\varphi_{v_k},\, \varphi^{\ast}_{v_k})\}_{k\in\mathbb N}$ itself converges to $(\varphi_{\infty}, \, \widetilde{\varphi}_{\infty})$. Then as before, we see that $\varphi_{\infty}$ is a complex geodesic of  $\Omega$ and $\widetilde{\varphi}_{\infty}={\varphi}^{\ast}_{\infty}$. Moreover,
$\varphi_{\infty}(1)=p$, $\varphi'_{\infty}(1)=\langle v_0, e_1\rangle v_0$, and
   $$
   \left.\frac{d}{d\theta}\right|_{\theta=0}|\varphi^{\ast}_{\infty}(e^{i\theta})|
   =\left.\frac{d}{d\theta}\right|_{\theta=0}|\widetilde{\varphi}_{\infty}(e^{i\theta})|
   =\lim_{k\to\infty}\left.\frac{d}{d\theta}\right|_{\theta=0}|\varphi^{\ast}_{v_k}(e^{i\theta})|
   =0.
   $$
Now by uniqueness (see Theorem \ref{mainthm1}), we see that $\varphi_{\infty}=\varphi_{v_0}$ and then $\widetilde{\varphi}_{\infty}=\varphi^{\ast}_{\infty}=\varphi^{\ast}_{v_0}$ as desired.

\medskip
{\rm (ii)}
We only check the continuity of $\Psi_p$, since the continuity of $\Psi_p^{-1}$ can be verified in an analogous way, or alternatively follows immediately by using the well-known fact that an injective continuous mapping from a compact topological space to a Hausdorff space is necessarily an embedding and noting that $\Psi_p\!: \overline{\Omega}\to \overline{\mathbb B}^n$ is such a mapping.

Let $z_0\in\overline{\Omega}$ and $\{z_k\}_{k\in\mathbb N}$ be a sequence in $\overline{\Omega}\setminus\{p\}$ converging to $z_0$. For every $k\in\mathbb N$, let $(v_{z_k},\,\zeta_{z_k})\in L_p\times (\overline{\Delta}\setminus\{1\})$ be the unique data such that
$\varphi_{v_{z_k}}(\zeta_{z_k})=z_k$, where $\varphi_{v_{z_k}}$ is the preferred complex geodesic of $\Omega$ associated to $v_{z_k}$. We then need to consider the following two cases:

\medskip
\noindent  {\bf Case 1:} $z_0=p$.
\smallskip

It suffices to show that
   $$
   \lim_{k\to \infty}|(\zeta_{z_k}-1)\langle v_{z_k}, e_1\rangle|=0.
   $$
Suppose on the contrary that this is not the case. Then by passing to a subsequence, we may assume that
   $$
   (v_{z_k},\, \zeta_{z_k})\rightarrow(v_{\infty}, \,\zeta_{\infty})
   \in L_p\times (\overline{\Delta}\setminus\{1\})
   $$
as $k\to \infty$. Then by {\rm (i)}, we have $\varphi_{v_{\infty}}(\zeta_{\infty})=p$.  Thus by the injectivity of $\varphi_{v_{\infty}}$ on $\overline{\Delta}$,  we see that $\zeta_{\infty}=1$, giving a contradiction.

\medskip
\noindent {\bf Case 2:} $z_0\in\overline{\Omega}\setminus\{p\}$.
\smallskip

By definition, it suffices to show that
   $$
   \lim_{k\to \infty}(v_{z_k},\,\zeta_{z_k})=(v_{z_0},\, \zeta_{z_0}),
   $$
where $(v_{z_0},\,\zeta_{z_0})\in L_p\times \overline{\Delta}$ is the unique data such that $\varphi_{v_{z_0}}(\zeta_{z_0})=z_0$, where $\varphi_{v_{z_0}}$ is  the preferred complex geodesic of $\Omega$ associated to $v_{z_0}$. In other words, $(v_{z_0},\,\zeta_{z_0})$ is the only limit point of the sequence $\{(v_{z_k},\,\zeta_{z_k})\}_{k\in\mathbb N}$.

By the compactness of $\partial\mathbb B^n\times\overline{\Delta}$, we see that the set of limit points of $\{(v_{z_k},\, \zeta_{z_k})\}_{k\in\mathbb N}$ is non-empty. Therefore, without loss of generality, we may assume that $\{(v_{z_k},\,\zeta_{z_k})\}_{k\in\mathbb N}$ itself converges to some $(v_{\infty},\, \zeta_{\infty})\in \partial\mathbb B^n\times\overline{\Delta}$. Then it remains to show that
$v_{\infty}=v_{z_0}$ and $\zeta_{\infty}=\zeta_{v_0}$. To this end, note first that ${\rm diam} \, \varphi_{v_{z_k}} (\Delta)\geq|z_k-p|$ and $z_k\rightarrow z_0\in\overline{\Omega}\setminus\{p\}$, we see
that
\begin{equation}\label{ineq:diameter2}
   \inf_{k\in\mathbb N}{\rm diam} \, \varphi_{v_{z_k}} (\Delta)>0.
\end{equation}
Now we claim that $v_{\infty}\in L_p$. Indeed, if this were not the case,  it would hold that
$\langle v_{\infty}, e_1\rangle=0$, i.e., $v_{\infty}\in T_p^{1,\, 0}\partial\Omega$.
Therefore,
   $$
   |\langle\varphi'_{v_{z_k}}(1),\,e_1\rangle|/|\varphi'_{v_{z_k}}(1)|
   =\langle v_{z_k}, e_1\rangle \rightarrow 0
   $$
as $k\to \infty$. Thus by a preservation principle for extremal mappings (see Theorem 1 or Corollary 1 in \cite{Huang-Illinois94}), it follows that ${\rm diam}\,\varphi_{v_{z_k}} (\Delta)\rightarrow 0$ as
$k\to \infty$. This contradicts inequality \eqref{ineq:diameter2}.

Now by ${\rm (i)}$ again, we see that $\varphi_{v_{\infty}}(\zeta_{\infty})=z_0=\varphi_{v_{z_0}}(\zeta_{z_0})$
and $\varphi_{v_{\infty}}(1)=p=\varphi_{v_{z_0}}(1)$. Then by uniqueness
(see \cite[pp. 362--363]{Lempert84}), there exists a $\sigma\in {\rm{Aut}}(\Delta)$ such that
$\varphi_{v_{\infty}}=\varphi_{v_{z_0}}\circ\sigma$. Moreover, by the injectivity of $\varphi_{v_{z_0}}$ on $\overline{\Delta}$, it follows that $\sigma(\zeta_{\infty})=\zeta_{z_0}$ and $\sigma(1)=1$
(and hence $\sigma'(1)>0$). Note also that $\varphi'_{v_{\infty}}(1)=\sigma'(1)\varphi'_{v_{z_0}}(1)$ and
$|v_{\infty}|=|v_{z_0}|=1$, we deduce that $\sigma'(1)=1$ and $v_{\infty}=v_{z_0}$. Consequently, $\sigma$ is the identity by uniqueness (see Theorem \ref{mainthm1}) and thus $\zeta_{\infty}=\zeta_{v_0}$ as
desired.

Now the proof is complete.
\end{proof}

Let $\Omega\subset\mathbb C^n\, (n>1)$ be as described in Theorem \ref{thm:continuity}. To indicate the definition of $\Psi_p$ depends on the base point $p\in\partial\Omega$, we rewrite $\Psi_p(p)=\nu_p$, and
   $$
   \Psi_p(z)=\nu_p+(\zeta_{z,\,p}-1)\langle v_{z,\,p},\, \nu_p\rangle v_{z,\,p},\quad
   z\in\overline{\Omega}\setminus\{p\},
   $$
where $\zeta_{z,\,p}:=\varphi_{v_{z,\,p}}^{-1}(z)$, and $v_{z,\,p}\in L_p$ is the unique data such that the associated preferred complex geodesic $\varphi_{v_{z,\,p}}$ (with base point $p$, i.e., $\varphi_{v_{z,\,p}}(1)=p$) passes through $z$, i.e.,  $z\in \varphi_{v_{z,\,p}}(\overline{\Delta})$. Then we can prove the following

\begin{theorem}\label{thm:joint-continuity}
Let $\Omega\subset\mathbb C^n\, (n>1)$ be a bounded strongly linearly convex domain with $C^3$-smooth boundary. Then
\begin{enumerate}[leftmargin=2.0pc, parsep=4pt]
\item [{\rm (i)}]
    The mapping
       $$
       \partial\Omega\ni p\mapsto \Psi_p\in C(\overline{\Omega})
       $$
    is continuous.
\item [{\rm (ii)}]
    The mapping $\Psi\!:\overline{\Omega}\times\partial\Omega\to\overline{\mathbb B}^n$ given by
       $$
       \Psi(z, p)=\Psi_p(z)
       $$
    is continuous.
\end{enumerate}
\end{theorem}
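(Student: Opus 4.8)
The plan is to reduce both assertions to a single uniform fact, the \emph{joint continuity of preferred complex geodesics in the pair (direction, base point)}: if $\{p_k\}\subset\partial\Omega$ converges to $p_0$ and $v_k\in L_{p_k}$, $v_0\in L_{p_0}$ satisfy $v_k\to v_0$, then $\varphi_{v_k,\,p_k}\to\varphi_{v_0,\,p_0}$ and $\varphi^{\ast}_{v_k,\,p_k}\to\varphi^{\ast}_{v_0,\,p_0}$ in $C^{1,\,\alpha}(\overline{\Delta})$ for every $\alpha\in(0,1/2)$, where $\varphi_{v,\,p}$ denotes the preferred complex geodesic of $\Omega$ with $\varphi_{v,\,p}(1)=p$ associated to $v\in L_p$. (Here one uses that the Gauss map $\partial\Omega\ni p\mapsto\nu_p$ is continuous since $\partial\Omega$ is $C^3$.) Granting this, (i) and (ii) are essentially equivalent: since $\overline{\Omega}$ is compact and each $\Psi_p\in C(\overline{\Omega})$ by Theorem \ref{thm:continuity}(ii), joint continuity of $\Psi$ implies continuity of $p\mapsto\Psi_p$ into $(C(\overline{\Omega}),\|\cdot\|_\infty)$ by the usual argument (if $p_k\to p_0$ but $\|\Psi_{p_k}-\Psi_{p_0}\|_{C(\overline{\Omega})}\geq\varepsilon$ along a subsequence, pick $z_k$ realizing the gap, pass to $z_k\to z_0$, and use joint continuity of $\Psi$ together with continuity of $\Psi_{p_0}$ for a contradiction); and conversely (i) gives (ii) since evaluation is continuous. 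So it remains to prove the joint continuity of preferred geodesics and then run the case analysis for (ii).

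The joint continuity of preferred geodesics is obtained by re-running the proof of Theorem \ref{thm:continuity}(i) with the fixed base point replaced by the sequence $p_k$ and with the normal $\nu_{p_k}\to\nu_{p_0}$ kept explicit throughout (one cannot normalize the whole sequence $\{p_k\}$ to a single normal direction). The crucial first step is that $\{\varphi_{v_k,\,p_k}(0):k\in\mathbb N\}\subset\subset\Omega$. Since the obliqueness $\langle v_k,\nu_{p_k}\rangle\to\langle v_0,\nu_{p_0}\rangle>0$, \cite[Theorem 2]{Huang-Pisa94} gives $\inf_k{\rm diam}\,\varphi_{v_k,\,p_k}(\Delta)>0$; if the base points approached $\partial\Omega$, one normalizes each $\varphi_{v_k,\,p_k}$ by a suitable $\sigma_k\in{\rm{Aut}}(\Delta)$, uses \cite[Proposition 4]{Chang-Hu-Lee88} to keep the normalized base points at a uniform positive distance from $\partial\Omega$, invokes Lemma \ref{lem:uniform extimate} to obtain uniform $C^{1,\,1/2}(\overline{\Delta})$ bounds on $\{\varphi_{v_k,\,p_k}\circ\sigma_k\}$ and its dual mappings, extracts a $C^{1,\,\alpha}$-limit which (using the diameter lower bound, the strong pseudoconvexity of $\Omega$, and continuity of the Kobayashi distance) is a complex geodesic with value $p_0$ at $1$, uses the Hopf lemma to see that $\sigma'_k(1)$ converges to a finite positive number, and finally uses the preferredness of each $\varphi_{v_k,\,p_k}$ — which survives in the $C^1$-limit because dual mappings are nowhere vanishing — together with the identity $\sigma'_k\,(\varphi_{v_k,\,p_k}\circ\sigma_k)^{\ast}=\varphi^{\ast}_{v_k,\,p_k}\circ\sigma_k$ on $\overline{\Delta}$ to confine $\{\sigma^{-1}_k\}$ to a relatively compact subset of ${\rm{Aut}}(\Delta)$; this forces $\varphi_{v_k,\,p_k}(0)=\varphi_{v_k,\,p_k}\circ\sigma_k\circ\sigma^{-1}_k(0)\to\varphi_\infty\circ\sigma^{-1}_\infty(0)\in\Omega$, a contradiction. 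Once $\{\varphi_{v_k,\,p_k}(0)\}\subset\subset\Omega$, Lemma \ref{lem:uniform extimate} bounds $\{\varphi_{v_k,\,p_k}\}$ and $\{\varphi^{\ast}_{v_k,\,p_k}\}$ uniformly in $C^{1,\,1/2}(\overline{\Delta})$, so every $C^{1,\,\alpha}$-limit point of the pair is a complex geodesic with value $p_0$ and derivative $\langle v_0,\nu_{p_0}\rangle v_0$ at $1$ which is still preferred, hence equals $(\varphi_{v_0,\,p_0},\varphi^{\ast}_{v_0,\,p_0})$ by Theorem \ref{mainthm1}; uniqueness of the limit point gives convergence of the full sequences.

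To deduce (ii), let $(z_k,p_k)\to(z_0,p_0)$ in $\overline{\Omega}\times\partial\Omega$ with $z_k\neq p_k$, and write $\Psi(z_k,p_k)=\nu_{p_k}+(\zeta_k-1)\langle v_k,\nu_{p_k}\rangle v_k$ where $v_k:=v_{z_k,\,p_k}\in L_{p_k}$ is the unit vector whose preferred geodesic with base point $p_k$ passes through $z_k$ and $\zeta_k:=\varphi_{v_k,\,p_k}^{-1}(z_k)\in\overline{\Delta}\setminus\{1\}$. If $z_0=p_0$, it suffices to show $(\zeta_k-1)\langle v_k,\nu_{p_k}\rangle v_k\to0$; otherwise a subsequence converges with $(v_k,\zeta_k)\to(v_\infty,\zeta_\infty)$, $v_\infty\in L_{p_0}$, $\zeta_\infty\neq1$, whence $z_k=\varphi_{v_k,\,p_k}(\zeta_k)\to\varphi_{v_\infty,\,p_0}(\zeta_\infty)$ by the joint continuity above, while $z_k\to p_0=\varphi_{v_\infty,\,p_0}(1)$ — impossible by the injectivity of $\varphi_{v_\infty,\,p_0}$ on $\overline{\Delta}$. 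If $z_0\neq p_0$, pass to a subsequence with $(v_k,\zeta_k)\to(v_\infty,\zeta_\infty)\in\partial\mathbb B^n\times\overline{\Delta}$; since ${\rm diam}\,\varphi_{v_k,\,p_k}(\Delta)\geq|z_k-p_k|\to|z_0-p_0|>0$, the preservation principle for extremal mappings (Theorem 1 or Corollary 1 in \cite{Huang-Illinois94}) rules out $v_\infty$ being tangent to $\partial\Omega$ at $p_0$, so $v_\infty\in L_{p_0}$; then by joint continuity $z_0=\varphi_{v_\infty,\,p_0}(\zeta_\infty)\in\varphi_{v_\infty,\,p_0}(\overline{\Delta})$ and $\varphi_{v_\infty,\,p_0}(1)=p_0$, so $v_\infty$ is the unit vector whose preferred geodesic through $p_0$ contains $z_0$, i.e., $v_\infty=v_{z_0,\,p_0}$, and then $\zeta_\infty=\varphi_{v_\infty,\,p_0}^{-1}(z_0)=\zeta_{z_0,\,p_0}$ by injectivity; hence $(v_k,\zeta_k)\to(v_{z_0,\,p_0},\zeta_{z_0,\,p_0})$ and $\Psi(z_k,p_k)\to\Psi(z_0,p_0)$. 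I expect the main work to lie in the middle paragraph: checking that the compactness argument of Theorem \ref{thm:continuity}(i) goes through with the base point varying, the key point being that Lemma \ref{lem:uniform extimate}, \cite[Theorem 2]{Huang-Pisa94} and \cite[Proposition 4]{Chang-Hu-Lee88} are all uniform in the base point (they depend only on $\Omega$), so that $\{\varphi_{v_k,\,p_k}(0)\}$ remains in a compact subset of $\Omega$.
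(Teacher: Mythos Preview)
Your proposal is correct and follows essentially the same approach as the paper. The paper proves (i) first by contradiction---assuming a sequence $(z_k,p_k)\to(z_0,p_0)$ with $\inf_k|\Psi_{p_k}(z_k)-\Psi_{p_0}(z_k)|>0$ and running the same two-case analysis you give for (ii), embedding the varying-base-point compactness argument for preferred geodesics inline rather than stating it as a separate lemma---and then reads off (ii) from (i) together with Theorem~\ref{thm:continuity}(ii); your organization (extract joint continuity of preferred geodesics, prove (ii), deduce (i)) is a clean repackaging of the same ingredients.
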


\begin{proof}
The proof is essentially the same as that of Theorem \ref{thm:continuity}.

{\rm (i)} Suppose that this is not the case. Then we can find a sequence $\{(z_k,\, p_k)\}_{k\in\mathbb N}$ converging to some point  $(z_0,\, p_0)\in\overline{\Omega}\times\partial\Omega$ such that
   $$
   \inf_{k\in\mathbb N}|\Psi_{p_k}(z_k)-\Psi_{p_0}(z_k)|>0.
   $$
By the continuity of $\Psi_{p_0}$, we can find a $k_0\in\mathbb N$ such that
\begin{equation}\label{ineq:discontinuous}
\inf_{k\geq k_0}|\Psi_{p_k}(z_k)-\Psi_{p_0}(z_0)|>0.
\end{equation}
For every $k\in\mathbb N$, let $(v_{z_k,\,p_k},\, \zeta_{z_k,\,p_k})\in L_{p_k}\times
 \overline{\Delta}$ be the unique data such that
   $$
   \varphi_{v_{z_k,\,p_k}}(\zeta_{z_k,\,p_k})=z_k,
   $$
where $\varphi_{v_{z_k,\,p_k}}$ is  the preferred complex geodesic of $\Omega$
associated to $v_{z_k,\,p_k}$ (with base point $p_k$). The remaining argument
 is divided into the following two cases:

\medskip
\noindent  {\bf Case 1:} $z_0=p_0$.
\smallskip

Since $\nu_{p_k}\to \nu_{p_0}$, with $k_0$ replaced by a larger integer, we may assume that
   $$
   \inf_{k\geq k_0}|(\zeta_{z_k,\,p_k}-1)\langle v_{z_k,\,p_k},\,\nu_{p_k}\rangle|>0.
   $$
Then by passing to a subsequence, we may assume that
   $$
   (v_{z_k,\,p_k},\, \zeta_{z_k,\,p_k})\rightarrow(v_{\infty}, \, \zeta_{\infty})
   \in L_{p_0}\times (\overline{\Delta}\setminus\{1\})
   $$
as $k\to \infty$. Thus it follows that
   $$
   |\langle\varphi'_{v_{z_k,\,p_k}}(1),\, \nu_{p_k}\rangle|/|\varphi'_{v_{z_k,\,p_k}}(1)|
   =\langle v_{z_k,\,p_k},\, \nu_{p_k}\rangle
   \rightarrow \langle v_{\infty},\, \nu_{p_0}\rangle>0
   $$
as $k\to \infty$. Together with \cite[Theorem 2]{Huang-Pisa94}, this further implies that
   $$
   \inf_{k\in\mathbb N}{\rm diam} \, \varphi_{v_{z_k,\,p_k}} (\Delta)>0.
   $$
Then a same argument as in the proof of Theorem \ref{thm:continuity} (i) shows that
$$
\{\varphi_{v_{z_k,\,p_k}}(0)\!: k\in\mathbb N\}\subset\subset\Omega,
$$
and consequently, it follows that $\{\varphi_{v_{z_k,\,p_k}}\}_{k\in\mathbb N}$ satisfies a
 uniform  $C^{1/2}$-estimate; see \cite[Proposition 8]{Lempert84} and also
 \cite[Proposition 1.6]{Huang-Canad95}. Therefore, we may further assume that
  $\{\varphi_{v_{z_k,\,p_k}}\}_{k\in\mathbb N}$ itself converges uniformly on
  $\overline{\Delta}$ to a complex geodesic $\varphi_{\infty}$ of $\Omega$. Clearly,
  $\varphi_{\infty}(1)=p_0$. On the other hand, taking into account that
  $\varphi_{v_{z_k,\,p_k}}(\zeta_{z_k,\,p_k})=z_k$ and letting $k\rightarrow
   \infty$ yield that $\varphi_{\infty}(\zeta_{\infty})=p_0$.
    Then by the injectivity of $\varphi_{\infty}$ on $\overline{\Delta}$,
    we see that $\zeta_{\infty}=1$, giving a contradiction.

\medskip
\noindent  {\bf Case 2:} $z_0\in\overline{\Omega}\setminus\{p_0\}$.
\smallskip

Obviously, we can assume that $z_k\neq p_k$ for all $k\in\mathbb N$. Also, we may assume that $\{(v_{z_k,\,p_k},\, \zeta_{z_k,\,p_k})\}_{k\in\mathbb N}$ itself converges to some $(v_{\infty}, \, \zeta_{\infty})\in \partial\mathbb B^n\times\overline{\Delta}$. We shall show that
$v_{\infty}=v_{z_0,\,p_0}$ and $\zeta_{\infty}=\zeta_{z_0,\,p_0}$, where $(v_{z_0,\,p_0},\, \zeta_{z_0,\,p_0})\in L_{p_0}\times \overline{\Delta}$ is the unique data such that
$\varphi_{v_{z_0,\,p_0}}(\zeta_{z_0,\,p_0})=z_0$, where $\varphi_{v_{z_0,\,p_0}}$ is  the preferred complex geodesic of $\Omega$ associated to $v_{z_0,\,p_0}$ (with base point $p_0$). Clearly, it will follow that the sequence $\{\Psi_{p_k}(z_k)\}_{k\in\mathbb N}$ converges to $\Psi_{p_0}(z_0)$. This will contradict inequality \eqref{ineq:discontinuous}.

Arguing as in Case 2 in the proof of Theorem \ref{thm:continuity} (ii), we see that
   $$
   \inf_{k\in\mathbb N}{\rm diam} \, \varphi_{v_{z_k,\,p_k}} (\Delta)>0,
   $$
and thus $v_{\infty}\in L_{p_0}$. Then, we can argue again as in the proof of Theorem
\ref{thm:continuity} (i) to conclude that
   $$
   \{\varphi_{v_{z_k,\,p_k}}(0)\!: k\in\mathbb N\}\subset\subset\Omega.
   $$
This in turn implies that both $\{\varphi_{v_{z_k,\,p_k}}\}_{k\in\mathbb N}$ and $\{\varphi^{\ast}_{v_{z_k,\,p_k}}\}_{k\in\mathbb N}$ satisfy a uniform  $C^{1,\, 1/2}$-estimate. Consequently, we may assume that $\{(\varphi_{v_{z_k,\,p_k}},\,\varphi_{v_{z_k,\,p_k}}^{\ast})\}_{k\in\mathbb
N}$ converges in $C^1(\overline{\Delta})\times C^1(\overline{\Delta})$ to $(\varphi_{\infty},\, \varphi_{\infty}^{\ast})$, where $\varphi_{\infty}$ is a complex geodesic of $\Omega$ with
$\varphi_{\infty}^{\ast}$ as its dual mapping. Clearly, $\varphi_{\infty}(1)=p_0$, $\varphi_{\infty}(\zeta_{\infty})=z_0$ and $\varphi'_{\infty}(1)=\langle v_{\infty},\, \nu_{p_0}\rangle
v_{\infty}$, as well as
  $$
  \left.\frac{d}{d\theta}\right|_{\theta=0}|\varphi^{\ast}_{\infty}(e^{i\theta})|=0.
  $$
Then by uniqueness, $\varphi_{\infty}=\varphi_{v_{z_0,\,p_0}}$ and $v_{\infty}=v_{z_0,\,p_0}$, $\zeta_{\infty}=\zeta_{z_0,\,p_0}$ as desired.

{\rm (ii)} Follows immediately from {\rm (i)} together with Theorem \ref{thm:continuity} (ii).
\end{proof}

\section{Properties of the new boundary spherical representation}\label{sect:CMA equations}
In this section we prove our second main result (Theorem \ref{mainthm2}). To this end, we further investigate the properties of the boundary spherical representation that we constructed in the preceding section.

\subsection{Preservation of horospheres and non-tangential approach regions}\label{props-BSR}
Let $\Omega\subset\mathbb C^n\, (n>1)$ be a bounded strongly linearly convex domain with $C^3$-smooth boundary, and let $p\in\partial\Omega$. Recall first that according to Abate \cite{Abatehoro}, a {\it horosphere} $E_{\Omega}(p, z_0, R)$ of center $p\in \partial\Omega$, pole $z_0\in \Omega$ and radius $R>0$ is defined as
$$
E_{\Omega}(p, z_0, R):=\left\{
z\in \Omega\!: \lim_{w\to p}\big(k_{\Omega}(z, w)-k_{\Omega}(z_0, w)\big)<\frac{1}{2}\log R
\right\},
$$
where $k_{\Omega}$ denotes the Kobayashi distance on $\Omega$. When $\Omega$ is strongly convex, the existence of the limit in the definition of horospheres is well-known; see, e.g., \cite[Theorem 2.6.47]{Abatebook}. It is also the case when $\Omega$ is only strongly linearly convex, since it follows from the work of Lempert \cite{Lempert84, Lempert86}, Guan \cite{GuanMAeqn}, and Blocki \cite{Blocki00, Blocki01} that the pluricomplex Green function
\begin{equation}\label{Greenkernel}
   g_{\Omega}(\,\cdot\,,w)=\log\tanh k_{\Omega}(\,\cdot\,, w)
   \in C^{1,\, 1}(\overline{\Omega}\setminus\{w\})\,
   \footnote{That $g_{\Omega}(\,\cdot\,,w)\in C^1(\overline{\Omega}\setminus\{w\})$ is enough for our purpose here.}
\end{equation}
for all $w\in\Omega$, so that the proof of \cite[Theorem 2.6.47]{Abatebook} can be easily modified and thus applies. When $\Omega=\mathbb B^n$, the open unit ball in $\mathbb C^n$, an easy calculation using the explicit formula for $k_{\mathbb B^n}$  shows that
\begin{equation}\label{eq:horosphere in ball}
   E_{\mathbb B^n}(p, 0, R)=
   \left\{z\in \mathbb B^n\!: \frac{|1-\langle z, p\rangle|^2}{1-|z|^2}<R \right\};
\end{equation}
see, e.g., \cite[Section 2.2.2]{Abatebook}. Geometrically, it is an ellipsoid of the Euclidean center $c:=p/(1+R)$, its intersection with the complex plane $\mathbb Cp$ is a Euclidean disc of radius $r:=R/(1+R)$, and its intersection with the affine subspace through $c$ orthogonal to $\mathbb Cp$ is a Euclidean ball of the larger radius $\sqrt{r}$.

For our later purpose, we also need the following
\begin{proposition}\label{prop:basic facts}
Let $\Omega\subset\mathbb C^n\, (n>1)$ be a bounded strongly linearly convex domain  with $C^3$-smooth boundary. Let $\varphi$ be a complex geodesic of $\Omega$, and  $\rho\in \mathcal{O}(\Omega,\, \Omega)$ the Lempert retract associated with $\varphi$.  Then
\begin{enumerate}[leftmargin=2.0pc, parsep=4pt]
\item [{\rm (i)}]
    For every $(\zeta, v)\in\partial\Delta\times\mathbb C^n$, one has
       $$
       d(\varphi^{-1}\circ\rho)_{\varphi(\zeta)}(v)
       =\frac{\langle v,\,\nu\circ\varphi(\zeta)\rangle}
       {\langle \varphi'(\zeta),\, \nu\circ\varphi(\zeta)\rangle},
       $$
    where $\nu$ denotes the unit  outward normal vector field of $\partial\Omega$.
\item  [{\rm (ii)}]
    For every $p\in\varphi(\overline{\Delta})\cap\partial\Omega$ and every non-tangential
    continuous curve $\gamma\!: [0,1)\to\Omega$ terminating at $p$, one has
       $$
       \lim_{t\to 1^-}k_{\Omega}(\gamma(t),\, \rho\circ\gamma(t))=0.
       $$
\end{enumerate}
\end{proposition}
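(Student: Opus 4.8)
The plan is to prove (i) by a direct computation from the formula for the left inverse obtained in the proof of Proposition~\ref{prop:regularity of leftinv}, and then to use (i) as the geometric input that drives (ii). For (i): since $\rho=\varphi\circ\varrho$ and $\varphi$ is a proper holomorphic embedding of $\Delta$ into $\Omega$ (so $\varphi\colon\overline{\Delta}\to\varphi(\overline{\Delta})$ is a homeomorphism), we have $\varphi^{-1}\circ\rho=\varrho$ on $\overline{\Omega}$; in particular $\varphi^{-1}\circ\rho\in C^{1,\alpha}(\overline{\Omega})$ and it suffices to compute $d\varrho_{\varphi(\zeta)}$ for $\zeta\in\partial\Delta$. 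I would start from \eqref{eq:formular of left-inverse}, which holds on all of $\overline{\Omega}$ when $m=3$; evaluating it at $z=\varphi(\zeta)$ with $\zeta\in\partial\Delta$ forces $z-\varphi\circ\varrho(z)=0$, so the denominator equals $1$ and $\frac{\partial\varrho}{\partial z_j}(\varphi(\zeta))=\langle e_j,\overline{\varphi^{\ast}(\zeta)}\rangle$ (equivalently, \eqref{eq: re-dual-lefinv} read pointwise). Hence $d\varrho_{\varphi(\zeta)}(v)=\langle v,\overline{\varphi^{\ast}(\zeta)}\rangle$ for $v\in\mathbb{C}^n$. Substituting the boundary expression $\varphi^{\ast}|_{\partial\Delta}(\zeta)=\zeta\mu(\zeta)\overline{\nu\circ\varphi(\zeta)}$ with $\mu=|\varphi^{\ast}|>0$ on $\partial\Delta$ gives $d\varrho_{\varphi(\zeta)}(v)=\zeta\mu(\zeta)\langle v,\nu\circ\varphi(\zeta)\rangle$, and the normalization $\langle\varphi',\overline{\varphi^{\ast}}\rangle\equiv 1$ — which at $\zeta\in\partial\Delta$ reads exactly $\zeta\mu(\zeta)\langle\varphi'(\zeta),\nu\circ\varphi(\zeta)\rangle=1$ — lets me eliminate $\zeta\mu(\zeta)$ and obtain the asserted formula (this also shows $\langle\varphi'(\zeta),\nu\circ\varphi(\zeta)\rangle\ne 0$ on $\partial\Delta$).

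For (ii), fix $p\in\varphi(\overline{\Delta})\cap\partial\Omega$; since $\varphi$ is proper, $p=\varphi(\zeta_0)$ for some $\zeta_0\in\partial\Delta$, and $\rho(p)=p$. The strategy is: (a) show $\gamma(t)-\rho(\gamma(t))$ is complex-tangential to $\partial\Omega$ at $p$ modulo a higher-order error; (b) bound $k_{\Omega}(\gamma(t),\rho\gamma(t))$ by integrating the Kobayashi--Royden metric along the short segment joining the two points, exploiting that this metric is only of order $\mathrm{dist}(\cdot,\partial\Omega)^{-1/2}$ in complex-tangential directions near a strongly pseudoconvex boundary point. For (a): by Proposition~\ref{prop:regularity of leftinv} one has $\varrho\in C^{1,\alpha}(\overline{\Omega})$ and $\varphi,\varphi^{\ast}\in C^{1,\alpha}(\overline{\Delta})$, so Taylor-expanding $\varphi$ at $\zeta_0$ and $\varrho$ at $p$ and inserting the formula for $d\varrho_p$ from part (i) yields, for $z\in\Omega$ near $p$,
\[
z-\rho(z)=(\mathrm{Id}-\pi)(z-p)+O\big(|z-p|^{1+\alpha}\big),\qquad
\pi(w):=\frac{\langle w,\nu_p\rangle}{\langle\varphi'(\zeta_0),\nu_p\rangle}\,\varphi'(\zeta_0),
\]
where $\pi$ is the linear projection of $\mathbb{C}^n$ onto $\mathbb{C}\varphi'(\zeta_0)$ along $T_p^{1,0}\partial\Omega$. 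Since $(\mathrm{Id}-\pi)(z-p)\in T_p^{1,0}\partial\Omega$, this gives $|z-\rho(z)|=O(|z-p|)$ while $|\langle z-\rho(z),\nu_p\rangle|=O(|z-p|^{1+\alpha})$.

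For (b): since $\gamma$ is non-tangential, $\delta(t):=\mathrm{dist}(\gamma(t),\partial\Omega)\asymp|\gamma(t)-p|\to 0$ as $t\to1^-$, and $\rho\gamma(t)\to p$. I would pass to local holomorphic coordinates near $p$ in which $\Omega$ is strongly convex (possible since $\partial\Omega$ is $C^3$ and strongly pseudoconvex); the two properties of the displacement from (a) are preserved, the segment from $\gamma(t)$ to $\rho\gamma(t)$ lies in $\Omega$, and — using concavity of $\mathrm{dist}(\cdot,\partial\Omega)$ on a convex domain together with the fact that a complex-tangential move changes this distance only by $O(|z-p|^{1+\alpha})$ — the whole segment stays at distance $\gtrsim\delta(t)$ from $\partial\Omega$. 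Comparing $\kappa_{\Omega}$ along this segment with the Kobayashi--Royden metric of a fixed interior ball of $\Omega$ gives the standard anisotropic bound $\kappa_{\Omega}(\,\cdot\,,v)\lesssim \frac{|\langle v,\nu\rangle|}{\mathrm{dist}(\cdot,\partial\Omega)}+\frac{|v|}{\sqrt{\mathrm{dist}(\cdot,\partial\Omega)}}$ (with $\nu$ the normal at the nearest boundary point); applied with $v=\gamma(t)-\rho\gamma(t)$ this yields $\kappa_{\Omega}\lesssim\delta(t)^{\alpha}+\delta(t)^{1/2}$ along the segment, hence
\[
k_{\Omega}\big(\gamma(t),\rho\gamma(t)\big)\ \lesssim\ \delta(t)^{\alpha}+\delta(t)^{1/2}\ \longrightarrow\ 0 .
\]

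The main obstacle is this last step: one needs the sharp (square-root) behavior of the Kobayashi--Royden metric in complex-tangential directions near a strongly pseudoconvex boundary point, and one must verify that the passage to convex coordinates and the interior-ball comparison do not destroy the complex-tangency of $\gamma(t)-\rho\gamma(t)$ established in (a). A convenient equivalent route is to quote a known localization estimate expressing $k_{\Omega}$ near $p$ through a Kor\'anyi-type pseudodistance, in which form the conclusion $k_{\Omega}(\gamma(t),\rho\gamma(t))\to0$ is immediate from the bounds on $|\gamma(t)-\rho\gamma(t)|$ and $|\langle\gamma(t)-\rho\gamma(t),\nu_p\rangle|$ together with $\delta(t)\asymp|\gamma(t)-p|$.
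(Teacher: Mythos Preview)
Your proof of (i) is correct and identical to the paper's: both identify $\varphi^{-1}\circ\rho=\varrho$, invoke \eqref{eq: re-dual-lefinv} to get $d\varrho_{\varphi(\zeta)}(v)=\langle v,\overline{\varphi^{\ast}(\zeta)}\rangle$, and then use the boundary formula for $\varphi^{\ast}$ together with the normalization $\langle\varphi',\overline{\varphi^{\ast}}\rangle\equiv 1$ to eliminate the factor $\zeta\mu(\zeta)$.

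For (ii), the paper gives no self-contained argument: it simply cites two results from Abate's book, namely that the proof of \cite[Lemma~2.7.12(iii)]{Abatebook} yields
\[
\lim_{t\to 1^-}\frac{|\gamma(t)-\rho\circ\gamma(t)|^2}{\mathrm{dist}(\rho\circ\gamma(t),\partial\Omega)}=0,
\]
and that \cite[Proposition~2.7.11]{Abatebook} converts this into $k_{\Omega}(\gamma(t),\rho\gamma(t))\to 0$. Your steps (a)--(b) are an explicit unfolding of exactly these two citations. Your Taylor expansion in (a) gives $|\gamma(t)-\rho\gamma(t)|=O(\delta(t))$ and, since $\langle\rho\gamma(t)-p,\nu_p\rangle=\langle\gamma(t)-p,\nu_p\rangle+O(\delta(t)^{1+\alpha})$, also $\mathrm{dist}(\rho\gamma(t),\partial\Omega)\asymp\delta(t)$; hence the paper's displayed ratio is $O(\delta(t))\to 0$. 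Your (b) --- integrating the anisotropic upper bound for $\kappa_{\Omega}$ near a strongly pseudoconvex boundary point along the short segment --- is precisely the mechanism behind Abate's Proposition~2.7.11. The technical points you flag (the segment staying at distance $\gtrsim\delta(t)$ from $\partial\Omega$, and the drift $|\nu_q-\nu_p|=O(\delta(t))$ of the normal along it) are real but routine once one localizes. So your approach is the same as the paper's, just made explicit where the paper defers to \cite{Abatebook}.
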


\begin{proof}
{\rm (i)} Set $\varrho:=\varphi^{-1}\circ\rho$. Then $\varrho$ is the so-called Lempert left inverse of $\varphi$ (see the paragraph proceeding Proposition \ref{prop:regularity of leftinv}), and $\varrho\in \mathcal{O}(\Omega,\, \Delta)\cap C^1(\overline{\Omega})$. Moreover,  equality \eqref{eq: re-dual-lefinv} gives
$$
\varphi^{\ast}=({\rm gard }\,\varrho)\circ\varphi=\frac{\partial
\varrho}{\partial z}\circ\varphi
$$
on $\overline{\Delta}$. Note also that
$$
\varphi^{\ast}|_{\partial\Delta}(\zeta)
=\frac{\overline{\nu\circ\varphi(\zeta)}}{\langle \varphi'(\zeta),\,
\nu\circ\varphi(\zeta)\rangle},
$$
the desired result follows immediately.

{\rm (ii)}
First of all, we can argue as in the proof of \cite[Lemma 2.7.12 (iii)]{Abatebook} to conclude that
$$
\lim_{t\to 1^-}\frac{|\gamma(t)-\rho\circ\gamma(t)|^2}
{{\rm dist}(\rho\circ\gamma(t),\, \partial\Omega)}=0.
$$
The remaining argument is the same as the proof of the second part of \cite[Proposition 2.7.11]{Abatebook}, and we leave the details to the interested reader.
\end{proof}

Now we prove the following result. The proof of the first part is analogous to that of \cite[Proposition 6.1]{Bracci-MathAnn}. We provide a detailed proof by modifying the argument given there, with the help of Proposition \ref{prop:basic facts}. The proof of the second part relies heavily on \cite[Theorem 2]{Huang-Pisa94} and Theorem \ref{thm:continuity}.

\begin{proposition}\label{prop:horo-preserving}
Let $\Omega\subset\mathbb C^n\, (n>1)$ be a bounded strongly linearly convex domain with $C^3$-smooth boundary. Let $p\in\partial\Omega$ and $\Psi_p\!: \overline{\Omega}\to \overline{\mathbb B}^n$ be the boundary spherical representation given in Section $\ref{subsect:spherical rep}$. Then
\begin{enumerate}[leftmargin=2.0pc, parsep=4pt]
\item [{\rm (i)}]
   For every $z_0\in\Omega$ and every $R>0$, one has
   $$
   \Psi_p(E_{\Omega}(p, z_0, R))=E_{\mathbb B^n}(\nu_p,\, \Psi_p(z_0),\, R).
   $$

{
\item  [{\rm (ii)}]
   For every $\beta>1$, there exists a constant $C_{\beta}>1$ such that
   \begin{equation*}\label{eq:NTAR-preserving}
      \Psi_p(\Gamma_{\beta}(p))\subset\big\{w\in\mathbb B^n\!: |w-\nu_p|<C_{\beta}(1-|w|)\big\},
   \end{equation*}
where $\Gamma_{\beta}(p)$ is as in \eqref{defn:nontangential}.
}
\end{enumerate}
\end{proposition}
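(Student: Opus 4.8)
\emph{Strategy for part (i).} The map $\Psi_p$ is built precisely so that it carries the preferred complex geodesic $\varphi_v$ of $\Omega$ to the preferred complex geodesic $\eta_v(\zeta)=\nu_p+(\zeta-1)\langle v,\nu_p\rangle v$ of $\mathbb B^n$, i.e.\ $\Psi_p\circ\varphi_v=\eta_v$. A direct computation (using $|v|=|\nu_p|=1$ and $\langle v,\nu_p\rangle>0$) gives, for $v\in L_p$ and $\zeta\in\Delta$,
$$1-|\eta_v(\zeta)|^2=\langle v,\nu_p\rangle^2(1-|\zeta|^2),\qquad 1-\langle\eta_v(\zeta),\nu_p\rangle=\langle v,\nu_p\rangle^2(1-\zeta),$$
hence $|1-\langle\eta_v(\zeta),\nu_p\rangle|^2/(1-|\eta_v(\zeta)|^2)=\langle v,\nu_p\rangle^2\,|1-\zeta|^2/(1-|\zeta|^2)$. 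Since \eqref{eq:horosphere in ball} together with a change of pole gives $E_{\mathbb B^n}(\nu_p,w_0,R)=\{w\in\mathbb B^n:\ |1-\langle w,\nu_p\rangle|^2/(1-|w|^2)<R\,|1-\langle w_0,\nu_p\rangle|^2/(1-|w_0|^2)\}$, part (i) reduces to showing that, for a fixed pole $z_0\in\Omega$, the horofunction $F_{z_0}(z):=\lim_{w\to p}(k_\Omega(z,w)-k_\Omega(z_0,w))$ (which exists, by the discussion preceding \eqref{eq:horosphere in ball}) satisfies
$$F_{z_0}(z)=\log\langle v_z,\nu_p\rangle+\tfrac12\log\frac{|1-\zeta_z|^2}{1-|\zeta_z|^2}+C(z_0),\qquad z=\varphi_{v_z}(\zeta_z)\in\Omega,$$
for a constant $C(z_0)$ independent of $z$. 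Indeed, the right-hand side equals $\tfrac12\log\bigl(e^{2C(z_0)}\,|1-\langle\Psi_p(z),\nu_p\rangle|^2/(1-|\Psi_p(z)|^2)\bigr)$, the value $z=z_0$ forces $e^{-2C(z_0)}=|1-\langle\Psi_p(z_0),\nu_p\rangle|^2/(1-|\Psi_p(z_0)|^2)$, and the claimed equality of horospheres follows at once.

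\emph{Proof of the horofunction formula.} I would argue in two steps. First, by the cocycle property of Busemann functions, $F_{z_0}(z)=\lim_{w\to p}(k_\Omega(z,w)-k_\Omega(\varphi_{v_z}(0),w))+F_{z_0}(\varphi_{v_z}(0))$; evaluating the first limit along the approach $w=\varphi_{v_z}(\eta)$, $\eta\to1$ radially --- legitimate because the limit exists and $\varphi_{v_z}$ sends this radius to a non-tangential curve at $p$ by the Hopf lemma, exactly as in the proof of Theorem \ref{mainthm1} --- and using that $\varphi_{v_z}$ is an isometry, this term equals $\lim_{\eta\to1}(k_\Delta(\zeta_z,\eta)-k_\Delta(0,\eta))=\tfrac12\log\frac{|1-\zeta_z|^2}{1-|\zeta_z|^2}$. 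Second, to identify $F_{z_0}(\varphi_v(0))$ as a function of $v$, I would fix a reference $v'\in L_p$ and compute $F_{\varphi_{v'}(0)}(\varphi_v(0))$: along $w=\varphi_v(\eta)$, $\eta\to1$ radially, Proposition \ref{prop:basic facts}(ii) applied to the Lempert retraction $\rho_{v'}$ associated with $\varphi_{v'}$ (note $p\in\varphi_{v'}(\overline\Delta)\cap\partial\Omega$) yields $k_\Omega(\varphi_v(\eta),\rho_{v'}\varphi_v(\eta))\to0$, whence $k_\Omega(\varphi_{v'}(0),\varphi_v(\eta))=k_\Delta\bigl(0,(\varrho_{v'}\circ\varphi_v)(\eta)\bigr)+o(1)$ with $\varrho_{v'}=\varphi_{v'}^{-1}\circ\rho_{v'}$ the Lempert left inverse. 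Therefore $F_{\varphi_{v'}(0)}(\varphi_v(0))=\tfrac12\log$ of the Julia--Wolff--Carath\'{e}odory dilatation of $\varrho_{v'}\circ\varphi_v$ at $1$; since $\varrho_{v'}\in C^1(\overline\Omega)$ by Proposition \ref{prop:regularity of leftinv}, this dilatation equals $(\varrho_{v'}\circ\varphi_v)'(1)=\langle\varphi_v'(1),\overline{\varphi_{v'}^{\ast}(1)}\rangle$ in view of \eqref{eq: re-dual-lefinv}, which by the normalizations $\varphi_v'(1)=\langle v,\nu_p\rangle v$ and $\varphi_{v'}^{\ast}(1)=\langle v',\nu_p\rangle^{-2}\overline{\nu_p}$ comes out to $\langle v,\nu_p\rangle^2/\langle v',\nu_p\rangle^2$. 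Hence $F_{\varphi_{v'}(0)}(\varphi_v(0))=\log\langle v,\nu_p\rangle-\log\langle v',\nu_p\rangle$, so $F_{z_0}(\varphi_v(0))=\log\langle v,\nu_p\rangle+C(z_0)$ with $C(z_0):=F_{z_0}(\varphi_{v'}(0))-\log\langle v',\nu_p\rangle$, and the formula follows. The delicate points --- where the mere $C^3$ regularity of $\partial\Omega$, rather than the $C^{14}$ or $C^\infty$ of earlier treatments, is actually felt --- are the justification that the unrestricted limit defining $F_{z_0}$ may be computed along $w=\varphi_v(\eta)$, and that Proposition \ref{prop:basic facts}(ii) is applicable to that curve; both rest on the Hopf-lemma estimate for complex geodesics and on Proposition \ref{prop:regularity of leftinv}.

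\emph{Proof of part (ii).} For $z\in\Gamma_\beta(p)$ write $z=\varphi_{v_z}(\zeta_z)$, so $\Psi_p(z)=\eta_{v_z}(\zeta_z)$; the preliminary identities give $|\Psi_p(z)-\nu_p|=\langle v_z,\nu_p\rangle|\zeta_z-1|$ and $1-|\Psi_p(z)|\ge\tfrac12(1-|\Psi_p(z)|^2)=\tfrac12\langle v_z,\nu_p\rangle^2(1-|\zeta_z|^2)$, so it suffices to bound $\langle v_z,\nu_p\rangle^{-1}$ and $|\zeta_z-1|/(1-|\zeta_z|)$ uniformly on $\Gamma_\beta(p)$. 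The lower bound $\langle v_z,\nu_p\rangle\ge c(\beta)>0$ for $z\in\Gamma_\beta(p)$ is precisely the non-degeneracy of extremal mappings established in \cite[Theorem 2]{Huang-Pisa94} (for $z$ away from $p$ one may alternatively note that $\Gamma_\beta(p)\cap\{|z-p|\ge\varepsilon\}$ is relatively compact in $\Omega$ and invoke the preservation principle of \cite{Huang-Illinois94} together with Theorem \ref{thm:continuity}). Granting this, \cite[Theorem 2]{Huang-Pisa94} also yields $\inf_{z\in\Gamma_\beta(p)}{\rm diam}\,\varphi_{v_z}(\Delta)>0$, so --- arguing as in the proof of Theorem \ref{thm:continuity}(i) --- the points $\varphi_{v_z}(0)$ remain in a compact subset of $\Omega$ and Lemma \ref{lem:uniform extimate} supplies a uniform $C^{1,\,1/2}(\overline\Delta)$-bound $M$ for $\{\varphi_{v_z}\}$. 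This bound gives, for $\zeta_z$ near $1$, both $|z-p|\ge\tfrac12\langle v_z,\nu_p\rangle|\zeta_z-1|$ and ${\rm dist}(z,\partial\Omega)\le M(1-|\zeta_z|)$; combined with $|z-p|<\beta\,{\rm dist}(z,\partial\Omega)$ this bounds $|\zeta_z-1|/(1-|\zeta_z|)$, while $\zeta_z$ cannot remain near $\partial\Delta\setminus\{1\}$ (else $z$ would be near $\partial\Omega$ but bounded away from $p$, hence outside $\Gamma_\beta(p)$). The asserted inclusion then holds with a suitable $C_\beta>1$. The main obstacle here is the uniform lower bound on $\langle v_z,\nu_p\rangle$, which is exactly the content of the non-degeneracy theorem of \cite{Huang-Pisa94}.
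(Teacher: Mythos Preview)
Your proposal is correct and follows essentially the same route as the paper's proof: both parts rest on Proposition~\ref{prop:basic facts} (the derivative formula for the Lempert left inverse and the vanishing of $k_\Omega(\gamma(t),\rho\circ\gamma(t))$ along non-tangential curves) together with the Julia--Wolff--Carath\'eodory theorem for part~(i), and on the relative compactness of the set of ``directions'' $\{v_z:z\in\Gamma_\beta(p)\}$ in $L_p$ for part~(ii).

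A few minor differences worth noting. For part~(i), the paper fixes two geodesics $\varphi$ (through $z_0$) and $\psi$ (through $z$) and shows directly that the $\Omega$- and $\mathbb B^n$-horofunctions agree, using retractions on \emph{both} sides; you instead derive an explicit formula for $F_{z_0}$ in the coordinates $(v_z,\zeta_z)$ via the cocycle property, using only the $\Omega$-side retraction $\rho_{v'}$. The two computations are the same in substance: your JWC dilatation $(\varrho_{v'}\circ\varphi_v)'(1)=\langle v,\nu_p\rangle^2/\langle v',\nu_p\rangle^2$ is exactly the paper's derivative identity $\langle(\Psi_p\circ\varphi)'(1),\nu_p\rangle=\langle\varphi'(1),\nu_p\rangle$ in disguise.

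For part~(ii), one small imprecision: the lower bound $\langle v_z,\nu_p\rangle\ge c(\beta)$ is not literally the statement of \cite[Theorem~2]{Huang-Pisa94}, which goes in the opposite direction (normal component bounded below $\Rightarrow$ diameter bounded below). The paper obtains the bound by arguing as in \cite[Lemma~3.4]{Bracci-Trans}, combining the continuity of $\Psi_p^{-1}$ with Huang's results; your parenthetical alternative via the preservation principle of \cite{Huang-Illinois94} (tangential direction $\Rightarrow$ diameter $\to0$, hence the disc cannot meet $\Gamma_\beta(p)$) is in fact closer to the actual mechanism. Once that compactness is in hand, the paper gets the uniform lower bound on $|\varphi_v(\zeta)-p|/|\zeta-1|$ from the continuity in Theorem~\ref{thm:continuity}(i), and the upper bound on ${\rm dist}(\varphi_v(\zeta),\partial\Omega)/(1-|\zeta|)$ from Kobayashi-distance estimates; your use of Lemma~\ref{lem:uniform extimate} to the same end is equivalent.
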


\begin{proof}
Without loss of generality, we may assume that $\nu_p=e_1$. Then by Theorem \ref{thm:continuity}, we know that $\Psi_p\!: \overline{\Omega}\to \overline{\mathbb B}^n$ is a homeomorphism with $\Psi_p(p)=e_1$.

{\rm (i)} According to the definition of horospheres, it suffices to prove that
\begin{equation}\label{eq:horo-preserving}
\lim_{\Omega\ni w\to p}\big(k_{\Omega}(z, w)-k_{\Omega}(z_0, w)\big)
=\lim_{\mathbb B^n\ni w\to e_1}\big(k_{\mathbb B^n}(\Psi_p(z),\, w)-k_{\mathbb B^n}(\Psi_p(z_0),\,w)\big)
\end{equation}
for all $z$, $z_0\in\Omega$.

We use the notation introduced in Section \ref{subsect:spherical rep}, and first show that for every complex geodesics $\varphi$ of $\Omega$ with $\varphi(1)=p$, $\Psi_p\circ\varphi$ is a complex geodesic of $\mathbb B^n$ and
\begin{equation}\label{eq:der-preserving}
\langle (\Psi_p\circ\varphi)'(1), e_1\rangle=\langle \varphi'(1), e_1\rangle.
\end{equation}
Indeed, for every such $\varphi$, we can rewrite it as the composition  $\varphi=\varphi_v\circ\sigma$, where $\varphi_v$ is the preferred complex geodesic of $\Omega$ associated to $v:=\varphi'(1)/|\varphi'(1)|\in L_p$, and $\sigma$ is a suitable element of ${\rm{Aut}}(\Delta)$ with $\sigma(1)=1$. Now by the definition of $\Psi_p$, it follows that $\Psi_p\circ\varphi_v=\eta_v$. We then see that
   $$
   \Psi_p\circ\varphi=\eta_v\circ\sigma\in \mathcal{O}(\overline{\Delta})
   $$
is a complex geodesic of $\mathbb B^n$ and particularly $(\Psi_p\circ\varphi)'(1)$ makes sense. Moreover,
   $$
   \langle (\Psi_p\circ\varphi)'(1),\, e_1\rangle=\sigma'(1)\langle\eta'_v(1),\, e_1\rangle
   =\sigma'(1)\langle v, e_1\rangle^2=\sigma'(1)\langle\varphi'_v(1),\, e_1\rangle
   =\langle\varphi'(1),\, e_1\rangle
   $$
as desired.

Now fix a pair of distinct points $z$, $z_0\in\Omega$, and we come to prove equality  (\ref{eq:horo-preserving}). Let $\varphi$ be the unique complex geodesic of $\Omega$ such that $\varphi(0)=z_0$ and $\varphi(1)=p$. Then from  what we have proved it follows that the left-hand side of equality (\ref{eq:horo-preserving}) is equal to
\begin{equation}\label{eq:pf of horo-pres1}
\begin{split}
\lim_{\mathbb R\ni t\to 1^{-}}&\big(k_{\Omega}(z,\, \varphi(t))-k_{\Omega}(z_0,\,\varphi(t))\big)
=\lim_{\mathbb R\ni t\to 1^{-}}\big(k_{\Omega}(z,\, \varphi(t))-k_{\Delta}(0, t)\big)\\
=&\lim_{\mathbb R\ni t\to 1^{-}}\big(k_{\mathbb B^n}(\Psi_p(z),\,\Psi_p\circ\varphi(t))-
  k_{\mathbb B^n}(\Psi_p\circ\varphi(0),\, \Psi_p\circ\varphi(t))\big)\\
 &+\lim_{\mathbb R\ni t\to 1^{-}}\big(k_{\Omega}(z,\, \varphi(t))-
  k_{\mathbb B^n}(\Psi_p(z),\, \Psi_p\circ\varphi(t))\big)\\
=&\lim_{\mathbb B^n\ni w\to e_1}\big(k_{\mathbb B^n}(\Psi_p(z),\, w)-
  k_{\mathbb B^n}(\Psi_p(z_0),\, w)\big)\\
 &+\lim_{\mathbb R\ni t\to 1^{-}}\big(k_{\Omega}(z,\, \varphi(t))-k_{\mathbb B^n}(\Psi_p(z),\,
  \Psi_p\circ\varphi(t))\big).\\
\end{split}
\end{equation}
The proof will be complete by showing that
\begin{equation}\label{eq:key limit1}
\lim_{\mathbb R\ni t\to 1^{-}}
\big(k_{\Omega}(z,\, \varphi(t))-k_{\mathbb B^n}(\Psi_p(z),\,\Psi_p\circ\varphi(t))\big)=0.
\end{equation}

Let $\psi$ be the unique complex geodesic of $\Omega$ such that $\psi(0)=z$ and $\psi(1)=p$. Let $\rho\in \mathcal{O}(\Omega,\,\Omega)$ and $\varrho\in \mathcal{O}(\mathbb B^n,\, \mathbb B^n)$ be the Lempert projections associated to $\psi$ and $\Psi_p\circ\psi$, respectively. Note that in view of the Hopf lemma, the continuous curve $[0, 1)\ni t\mapsto \varphi(t)$ is non-tangential, it follows from Proposition \ref{prop:basic facts} (ii) that
   $$
   |k_{\Omega}(z,\,\varphi(t))-k_{\Omega}(z,\, \rho\circ\varphi(t))|
   \leq  k_{\Omega}(\varphi(t),\, \rho\circ\varphi(t))\to 0
   $$
as $t\to 1^-$. Similarly,
   $$
   \big|k_{\mathbb B^n}(\Psi_p(z),\, \Psi_p\circ\varphi(t))-
   k_{\mathbb B^n}(\Psi_p(z), \,\varrho\circ\Psi_p\circ\varphi(t))\big|
   \leq k_{\mathbb B^n}(\Psi_p\circ\varphi(t), \,\varrho\circ\Psi_p\circ\varphi(t))\to 0
   $$
as $t\to 1^-$. As a result, we see that equality (\ref{eq:key limit1}) is equivalent to
\begin{equation}\label{eq:key limit2}
\lim_{\mathbb R\ni t\to 1^{-}}\big(k_{\Omega}(z,\, \rho\circ\varphi(t))-
k_{\mathbb B^n}(\Psi_p(z), \,\varrho\circ\Psi_p\circ\varphi(t))\big)=0.
\end{equation}
Now using the explicit formula for $k_{\Delta}$ and Proposition \ref{prop:basic facts} (i), we deduce that
\begin{equation*}
\begin{split}
&\lim_{\mathbb R\ni t\to 1^{-}}\big(k_{\Omega}(z, \,\rho\circ\varphi(t))
 -k_{\mathbb B^n}(\Psi_p(z),\, \varrho\circ\Psi_p\circ\varphi(t))\big)\\
=&\,\lim_{\mathbb R\ni t\to 1^{-}}\big(k_{\Delta}(0, \,\psi^{-1}\circ\rho\circ\varphi(t))
 -k_{\Delta}(0, \,(\Psi_p\circ\psi)^{-1}\circ\varrho\circ\Psi_p\circ\varphi(t))\big)\\
=&\,\frac12\lim_{\mathbb R\ni t\to 1^{-}}\log
  \bigg(\frac{1-|(\Psi_p\circ\psi)^{-1}\circ\varrho\circ\Psi_p\circ\varphi(t)|}{1-t}
    \cdot\frac{1-t}{1-|\psi^{-1}\circ\rho\circ\varphi(t)|}
  \bigg)\\
=&\,\frac12\log
  \frac{d((\Psi_p\circ\psi)^{-1}\circ\varrho)_{e_1}((\Psi_p\circ\varphi)'(1))}
  {d(\psi^{-1}\circ\rho)_p(\varphi'(1))}\\
=&\,\frac12\log
  \bigg(\frac{\langle(\Psi_p\circ\varphi)'(1), \,e_1\rangle}{\langle(\Psi_p\circ\psi)'(1),\, e_1\rangle}
     \cdot\frac{\langle\psi'(1),\, e_1\rangle}{\langle\varphi'(1),\, e_1\rangle}
  \bigg),
\end{split}
\end{equation*}
which is equal to zero in view of equality (\ref{eq:der-preserving}). The penultimate equality follows from a simple geometrical consideration togther with the Hopf lemma, or alternatively from the classical Julia-Wolff-Cararth\'{e}odory theorem (see, e.g., \cite[Section 1.2.1]{Abatebook}, \cite[Chapter VI]{Sarasonbook}).

Now equality (\ref{eq:key limit2}) (and hence (\ref{eq:key limit1})) follows. The proof of (i) is complete.

\medskip
{\rm (ii)} For every $\beta>1$, we set
\begin{equation}\label{direction-cone}
\mathcal{V}_{\beta}:=\big\{v\in L_p\!: \varphi_v(\Delta)\cap\Gamma_{\beta}(p)\neq\emptyset\big\}.
\end{equation}
Then by using the continuity of $\Psi^{-1}_p$ and \cite[Theorem 2]{Huang-Pisa94}, we can argue as in the proof of \cite[Lemma 3.4]{Bracci-Trans} to conclude that for every $\beta>1$, $\mathcal{V}_{\beta}$ is relatively compact in $L_p$. Therefore,
\begin{equation}\label{centercpt1}
   \{\eta_v(0)\!: v\in \mathcal{V}_{\beta}\}\subset\subset\mathbb B^n
\end{equation}
and
\begin{equation}\label{centercpt2}
   \{\varphi_v(0)\!: v\in \mathcal{V}_{\beta}\}
   =\Psi^{-1}_p(\{\eta_v(0)\!: v\in \mathcal{V}_{\beta}\})
   \subset\subset\Omega.
\end{equation}
Together with the following simple estimate (see, e.g., \cite[Theorem 2.3.51]{Abatebook}):
   $$
   \sup_{(z,\, w)\in\Omega\times K}k_{\Omega}(z, w)+
   \frac12\log{\rm dist}(z, \partial\Omega)<\infty
   $$
for all compact sets $K\subset\Omega$, it follows that
\begin{equation}\label{uniform-est1}
   \sup_{(v,\, \zeta)\in \mathcal{V}_{\beta}\times \Delta}
   \frac{{\rm dist}(\varphi_v(\zeta), \partial\Omega)}{1-|\zeta|}<\infty.
\end{equation}
On the other hand, in light of Theorem \ref{thm:continuity} (i) we see that the function
  $$
  \frac{\varphi_v(\zeta)-p}{\zeta-1}
  =\int_{0}^{1}\varphi_v'(t\zeta+(1-t))dt
  $$
is continuous on $L_p\times\overline{\Delta}$, and nowhere vanishing there by the injectivity of $\varphi_v$ on $\overline{\Delta}$. Thus
\begin{equation*}
   \inf_{(v,\, \zeta)\in \mathcal{V}_{\beta}\times \overline{\Delta}}
   \bigg|\frac{\varphi_v(\zeta)-p}{\zeta-1}\bigg|>0,
\end{equation*}
which, combined with \eqref{direction-cone} and \eqref{uniform-est1}, implies that
  $$
  \bigcup_{v\in \mathcal{V}_{\beta}}\varphi_v^{-1}(\Gamma_{\beta}(p))
  \subset\big\{\zeta\in \Delta\!: |\zeta-1|<\widetilde{C}_{\beta}(1-|\zeta|)\big\}
  $$
for some sufficiently large $\widetilde{C}_{\beta}>1$. Now to complete the proof, it suffices to simply take
  $
  C_{\beta}:=2\widetilde{C}_{\beta}/\inf\limits_{v\in \mathcal{V}_{\beta}}\langle v, e_1\rangle.
  $
\end{proof}

\subsection{Complex Monge-Amp\`{e}re equations with boundary singularity}
Let $\Omega$ be a domain  in $\mathbb C^n\, (n>1)$ and denote by ${\rm Psh}(\Omega)$ the real cone of plurisubharmonic functions on $\Omega$. Then according to Bedford-Taylor \cite{Bedford82}, the complex Monge-Amp\`{e}re operator $(dd^c)^n$ (here $d^c=i(\bar{\partial}-\partial)$) can be defined for all $u\in {\rm Psh}(\Omega)\cap L^{\infty}_{{\rm loc}}(\Omega)$;  see alternatively \cite{Blockinote, Demaillynote, Demaillybook, Klimekbook, Kolodziej05, GZ-MAeqnbook}  for details. A very deep theorem of Bedford-Taylor \cite{Bedford76, Bedford82}, which is in many ways central to pluripotential theory, states that a function $u\in {\rm Psh}(\Omega)\cap  L^{\infty}_{{\rm loc}}(\Omega)$ solves the homogeneous complex Monge-Amp\`{e}re equation $(dd^c u)^n=0$ on $\Omega$ if and only if it is {\it maximal} on $\Omega$, in the sense of Sadullaev; namely, for every open set $G\subset\subset\Omega$ and every $v\in{\rm Psh}(G)$ satisfying that
   $$
   \limsup_{G\ni z\to x}v(z)\leq u(x)
   $$
for all $x\in\partial G$, it follows that $v\leq u$ on $G$.

We are now in a position to prove Theorem $\ref{mainthm2}$.

\begin{proof}[Proof of Theorem $\ref{mainthm2}$]
We first consider the special case when $\Omega=\mathbb B^n$. Set
   $$
   P_{\mathbb B^n,\, p}(z):=-\frac{1-|z|^2}{|1-\langle z, p\rangle|^2}.
   $$
Then $P_{\mathbb B^n,\, p}\in C^{\infty}(\overline{\mathbb B}^n\setminus\{p\})$. To prove that $P_{\mathbb B^n,\, p}$ is a solution to equation \eqref{MA-bp}, we need only verify that $P_{\mathbb B^n,\, p}$ is plurisubharmonic on $\mathbb B^n$ and $(dd^c P_{\mathbb B^n,\, p})^n$ vanishes identically there. Indeed, an easy calculation yields that
$$
\frac{\partial^2 P_{\mathbb B^n,\, p}}{\partial z_j\partial\overline z_k}(z)
=\frac{\delta_{j k}}{|1-\langle z, p\rangle|^2}+\frac{(1-\langle z, p\rangle)\overline{z}_jp_k+(1-\overline{\langle z, p\rangle})\overline{p}_jz_k-(1-|z|^2)\overline{p}_jp_k}{|1-\langle z, p\rangle|^4}
$$
for all $j,\, k=1,\ldots, n$, where $\delta_{j k}$ is the Kronecker delta. Note also that
$$2{\rm Re}(1-\langle z, p\rangle)-(1-|z|^2)=|z-p|^2,
$$
we then conclude that for every $z\in \mathbb B^n$ and $v\in \mathbb C^n$,
\begin{equation*}\label{hessian}
\begin{split}
\sum_{j,\, k=1}^n\frac{\partial^2 P_{\mathbb B^n,\, p}}{\partial z_j\partial\overline z_k}(z)v_j\overline{v}_k
=&\,\frac{|v|^2}{|1-\langle z, p\rangle|^2}+\frac{2{\rm Re}\big((1-\langle z, p\rangle)\langle v, z\rangle\langle p, v\rangle\big)-(1-|z|^2)|\langle p, v\rangle|^2}{|1-\langle z, p\rangle|^4}\\
=&\,|1-\langle z, p\rangle|^{-4}\Big(|1-\langle z, p\rangle|^2 |v|^2+|z-p|^2|\langle p, v\rangle|^2\\
&+2{\rm Re}\big((1-\langle z, p\rangle)\langle v, z-p\rangle\langle p, v\rangle\big)\Big)\\
=&\,|1-\langle z, p\rangle|^{-4}|(1-\langle z, p\rangle)v+\langle v, p\rangle (z-p)|^2,
\end{split}
\end{equation*}
which is obviously nonnegative and equal to $0$ if and only if $v=\lambda(z-p)$ with $\lambda\in\mathbb C$. This means that $P_{\mathbb B^n,\, p}\in{\rm Psh}(\mathbb B^n)$ and $(dd^c P_{\mathbb B^n,\, p})^n=0$ on $\mathbb B^n$, as desired. Moreover, by equality (\ref{eq:horosphere in ball}), it holds that
\begin{equation}\label{eq:horosphere-ball-sublevel1}
   E_{\mathbb B^n}(p, 0, R)=\big\{z\in\mathbb B^n\!: P_{\mathbb B^n,\, p}(z)<-1/R\big\}
\end{equation}
for all $R>0$. In other words, the sub-level sets of $P_{\mathbb B^n,\, p}$ are precisely horospheres of $\mathbb B^n$ with center $p$.

We now consider the general case and assume without loss of generality that $\nu_p=e_1$. Let $\Psi_p\!: \overline{\Omega}\to \overline{\mathbb B}^n$ be the boundary spherical representation given in Section \ref{subsect:spherical rep}, and set
  $$
  P_{\Omega,\, p}:=P_{\mathbb B^n,\, e_1}\circ\Psi_p.
  $$
Then Theorem \ref{thm:joint-continuity} implies that $P_{\Omega,\, p}\in C(\overline{\Omega}\setminus\{p\})$ and $P_{\Omega,\, p}=0$ on $\partial\Omega\setminus\{p\}$. Also by Proposition \ref{prop:horo-preserving} (i) and equality \eqref{eq:horosphere-ball-sublevel1}, we see that the sub-level sets of $P_{\Omega,\, p}$ are precisely horospheres of $\Omega$ with center $p$. Moreover, in light of the proof of \cite[Theorem 5.1]{Bracci-Trans} (with a slight modification)
  one has the following generalized Phragm\'en-Lindel\"of property
  for $P_{\Omega,\, p}$:
\begin{equation*}\label{extremality}
\begin{split}
   P_{\Omega,\, p}
   =\sup\Big\{&u\in {\rm Psh}(\Omega)\!: \limsup_{z\to x} u(z)\leq 0 \ \hbox{for all}
   \ x\in \partial \Omega\setminus\{p\},\\
   &\liminf_{t\to 1^{-}} |u(\gamma(t))(1-t)|
   \geq 2{\rm Re}\, \langle\gamma'(1), e_1\rangle^{-1} \ \hbox{for all}\ \gamma\in \Gamma_p
   \Big\},
\end{split}
\end{equation*}
where $\Gamma_p$ is the set of non-tangential $C^\infty$-curves $\gamma\!:[0,1]\to \Omega\cup\{p\}$ terminating at $p$ and with $\gamma([0,1))\subset\Omega$. Combining this with the (upper semi-) continuity of $P_{\Omega,\, p}$ on $\Omega$, we then see that  $P_{\Omega,\, p}\in {\rm Psh}(\Omega)$.

To show that $(dd^c P_{\Omega,\, p})^n=0$ on $\Omega$, we proceed as follows. For every $z\in\Omega$, we can find a $v\in L_p$ such that the associated preferred complex geodesic $\varphi_v$ passes through $z$, i.e., $z\in\varphi_v(\Delta)$. Then we see that
\begin{equation}\label{slicePoisson}
P_{\Omega,\, p}\circ\varphi_v=P_{\mathbb B^n,\, e_1}\circ\Psi_p\circ\varphi_v
=P_{\mathbb B^n,\, e_1}\circ\eta_v=-P/\langle v, e_1\rangle^2,
\end{equation}
where
\begin{equation}\label{Poissonkernel}
   P(\zeta):=\frac{1-|\zeta|^2}{|1-\zeta|^2}
\end{equation}
is the classical Poisson kernel on $\Delta$, which is obviously harmonic there. This leads us to conclude that $P_{\Omega,\, p}$ is  maximal on $\Omega$ by \cite[Proposition 5.1.4]{Braccinote}, and hence $(dd^c P_{\Omega,\, p})^n=0$ on $\Omega$, in view of Bedford-Taylor \cite{Bedford76, Bedford82}.

\smallskip
Now it remains to show that $P_{\Omega,\, p}(z)\approx -|z-p|^{-1}$ as $z\to p$ non-tangentially. First of all, by Proposition \ref{prop:horo-preserving} (ii) we can find for every $\beta>1$ a constant $C_{\beta}>1$ such that
   \begin{equation}\label{eq:behavior of BSE}
      \Psi_p(\Gamma_{\beta}(p))\subset\big\{w\in\mathbb B^n\!: |w-e_1|<C_{\beta}(1-|w|)\big\},
   \end{equation}
where $\Gamma_{\beta}(p)$ is as in \eqref{defn:nontangential}. Next, we write
   $$
   P_{\Omega,\, p}(z)|z-p|
   =P_{\mathbb B^n,\, e_1}\circ\Psi_p(z)|\Psi_p(z)-e_1|\cdot\frac{|z-p|}{|\Psi_p(z)-e_1|}
   $$
and
   $$
   P_{\mathbb B^n,\, e_1}\circ\Psi_p(z)|\Psi_p(z)-e_1|
   =-\frac{1-|\Psi_p(z)|^2}{|1-\langle \Psi_p(z), e_1\rangle|}
     \frac{|\Psi_p(z)-e_1|}{|1-\langle \Psi_p(z), e_1\rangle|}.
   $$
In view of (\ref{eq:behavior of BSE}), we see that for every $z\in\Gamma_{\beta}(p)$,
   $$
   \frac{1}{C_{\beta}}\leq\frac{1-|\Psi_p(z)|}{|\Psi_p(z)-e_1|}
   \leq\frac{1-|\Psi_p(z)|^2}{|1-\langle \Psi_p(z), e_1\rangle|}
   =(1+|\Psi_p(z)|)\frac{1-|\Psi_p(z)|}{|1-\langle \Psi_p(z), e_1\rangle|}
   \leq 2
   $$
and
   $$
   1\leq\frac{|\Psi_p(z)-e_1|}{|1-\langle \Psi_p(z), e_1\rangle|}
   \leq C_{\beta}\frac{1-|\Psi_p(z)|}{|1-\langle \Psi_p(z), e_1\rangle|}
   \leq C_{\beta}.
   $$

We are left to examine the behavior of the quotient $\frac{|z-p|}{|\Psi_p(z)-e_1|}$ as $z$ in $\Gamma_{\beta}(p)$. We follow an argument in \cite{Bracci-MathAnn}. First of all, it follows  from the definition of $\Gamma_{\beta}(p)$ and \eqref{eq:behavior of BSE} that
   $$
   |z-p|\approx {\rm dist}(z, \partial\Omega), \qquad
   |\Psi_p(z)-e_1|\approx {\rm dist}(\Psi_p(z), \partial\mathbb B^n)
   $$
for all $z\in\Gamma_{\beta}(p)$. Here the implicit constants depend only on $\beta$. On the other hand, by the well-known boundary estimates of the Kobayashi distance on bounded strongly pseudoconvex domains with $C^2$-boundary (see, e.g., \cite[Theorems 2.3.51 and 2.3.52]{Abatebook}), we know that
   $$
   \sup_{z\in\Omega}\big|k_{\Omega}(z, \Psi_p^{-1}(0))+
   \frac12\log{\rm dist}(z, \partial\Omega)\big|<\infty,
   $$
and the same is true for $\mathbb B^n$. Therefore, passing to the logarithm, it remains to show that for every $\beta>1$,
\begin{equation}\label{ineq:key estimate for Kob-dist1}
\sup_{z\in\Gamma_{\beta}(p)}\big|k_{\mathbb B^n}(\Psi_p(z), 0)-k_{\Omega}(z, \Psi_p^{-1}(0))\big|<\infty.
\end{equation}
To this end, we first conclude from \eqref{centercpt1} and \eqref{centercpt2} that
   $$
   C'_{\beta}:=\sup_{v\in \mathcal{V}_{\beta}}k_{\mathbb B^n}(\eta_v(0), 0)<\infty,
   $$
and
   $$
   C''_{\beta}:=\sup_{v\in \mathcal{V}_{\beta}}k_{\Omega}(\varphi_v(0), \Psi_p^{-1}(0))<\infty,
   $$
where $\mathcal{V}_{\beta}$ is as in \eqref{direction-cone}. Now for every $z\in\Gamma_{\beta}(p)$, let $v\in \mathcal{V}_{\beta}$ be such that
$z\in\varphi_v(\Delta)$. Then
\begin{equation*}\label{ineq:key estimate for Kob-dist2}
\begin{split}
   \big|k_{\Omega}(z, \Psi_p^{-1}(0))&-k_{\Delta}(\varphi_v^{-1}(z), 0)\big|
   =\big|k_{\Omega}(z, \Psi_p^{-1}(0))-k_{\Omega}(z, \varphi_v(0))\big|\\
   &\leq k_{\Omega}(\varphi_v(0), \Psi_p^{-1}(0))\leq C''_{\beta},
\end{split}
\end{equation*}
and
\begin{equation*}\label{ineq:key estimate for Kob-dist3}
\begin{split}
   \big|k_{\mathbb B^n}(\Psi_p(z), 0)&-k_{\Delta}(\varphi_v^{-1}(z), 0)\big|
   =\big|k_{\mathbb B^n}(\Psi_p(z), 0)-k_{\Delta}(\eta_v^{-1}\circ\Psi_p(z), 0)\big|\\
   &=\big|k_{\mathbb B^n}(\Psi_p(z), 0)-k_{\mathbb B^n}(\Psi_p(z), \eta_v(0))\big|\\
   &\leq k_{\mathbb B^n}(\eta_v(0), 0)\leq C'_{\beta}.
\end{split}
\end{equation*}
Combining these two estimates leads to
   $$
   \sup_{z\in\Gamma_{\beta}(p)}\big|k_{\mathbb B^n}(\Psi_p(z), 0)-k_{\Omega}(z, \Psi_p^{-1}(0))\big|
   \leq C'_{\beta}+C''_{\beta}<\infty,
   $$
and  \eqref{ineq:key estimate for Kob-dist1} follows.

The proof is now complete.
\end{proof}

Let $\Omega$, $p$ and $P_{\Omega,\, p}$ be as described in the above proof. Then the function $P_{\Omega}\!:(\overline{\Omega}\times\partial\Omega)\setminus{\rm diag }\, \partial\Omega\to (-\infty, 0]$ given by
       $$
       P_{\Omega}(z, p)=P_{\Omega,\, p}(z)
       $$
    is continuous, where
       $$
       {\rm diag }\, \partial\Omega:=\big\{(z, z)\in \mathbb C^{2n}\!: z\in\partial\Omega\big\}.
       $$
    This follows immediately from Theorem \ref{thm:joint-continuity} together
    with the fact that
    \begin{equation}\label{pluri-Poissonkernel}
       P_{\Omega}(z, p)=-\frac{1-|\Psi_p(z)|^2}{|1-\langle \Psi_p(z), \nu_p\rangle|^2}.
    \end{equation}

The following result concerns the uniqueness of solutions to equation \eqref{MA-bp}, which is a slight refinement of
\cite[Theorem 7.1]{Bracci-Trans}.

\begin{proposition}\label{prop:uniqueness-MA}
Let $\Omega \subset\mathbb C^n\, (n>1)$ be a bounded strongly linearly convex domain with $C^3$-smooth boundary. Let $p\in\partial\Omega$ and  $\nu_p$ be the unit outward normal to $\partial\Omega$ at $p$.  Then $P_{\Omega,\, p}$ is the unique solution to equation \eqref{MA-bp} with the additional property that
\begin{equation}\label{asymptotic limits}
\lim_{t\to 1^{-}} u\circ\gamma(t)(1-t)=-{\rm Re}\frac{2}{\langle\gamma'(1), \nu_p\rangle}
\end{equation}
for all $\gamma\in \Gamma_p$, the set of non-tangential $C^\infty$-curves $\gamma\!:[0,1]\to \Omega\cup\{p\}$ terminating at $p$ and with $\gamma([0,1))\subset\Omega$.
\end{proposition}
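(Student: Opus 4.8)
The plan is to show that $P_{\Omega,\,p}$ itself is among the solutions of \eqref{MA-bp} enjoying the normalization \eqref{asymptotic limits}, and then to prove that \emph{any} solution $u$ of \eqref{MA-bp} satisfying \eqref{asymptotic limits} must coincide with $P_{\Omega,\,p}$, by establishing the two inequalities $u\le P_{\Omega,\,p}$ and $u\ge P_{\Omega,\,p}$ on $\Omega$ separately.

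\emph{Step 1: $P_{\Omega,\,p}$ satisfies \eqref{asymptotic limits}.} Fix $\gamma\in\Gamma_p$, let $\varphi:=\varphi_{\nu_p}$ be the preferred complex geodesic of $\Omega$ associated to $\nu_p$ (so $\varphi(1)=p$, $\varphi'(1)=\nu_p$, $\varphi^{\ast}(1)=\overline{\nu_p}$, and $P_{\Omega,\,p}\circ\varphi=-P$ by \eqref{slicePoisson}, where $P$ is the Poisson kernel \eqref{Poissonkernel}), let $\varrho$ be its Lempert left inverse and $\rho:=\varphi\circ\varrho$ the associated Lempert retraction. Since $\varrho\in C^{1,\,\alpha}(\overline\Omega)$, the curve $\lambda:=\varrho\circ\gamma$ is $C^1$ up to $t=1$ with $\lambda(1)=\varrho(p)=1$, and a short computation using \eqref{eq: re-dual-lefinv} gives $\lambda'(1)=\langle\gamma'(1),\overline{\varphi^{\ast}(1)}\rangle=\langle\gamma'(1),\nu_p\rangle$, which has strictly positive real part by the Hopf lemma (cf. Remark \ref{rmk:existence of geodesics}); in particular $\lambda$ approaches $1$ non-tangentially. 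Hence, by the elementary boundary behavior of the Poisson kernel at $1$ (equivalently, the Julia--Wolff--Carath\'eodory theorem),
$$
\lim_{t\to1^-}P_{\Omega,\,p}(\rho\circ\gamma(t))(1-t)=-\lim_{t\to1^-}P(\lambda(t))(1-t)=-\,{\rm Re}\,\frac{2}{\langle\gamma'(1),\nu_p\rangle}.
$$
To pass from $\rho\circ\gamma$ to $\gamma$ I would use that the sub-level sets of $P_{\Omega,\,p}$ are exactly the horospheres $E_\Omega(p,z_0,R)$ with $z_0=\Psi_p^{-1}(0)$, so that $\log(-P_{\Omega,\,p})=-2\beta$, where $\beta(z):=\lim_{w\to p}\big(k_\Omega(z,w)-k_\Omega(z_0,w)\big)$ is $1$-Lipschitz for $k_\Omega$; therefore $\big|\log(-P_{\Omega,\,p}(z))-\log(-P_{\Omega,\,p}(z'))\big|\le 2k_\Omega(z,z')$ for all $z,z'\in\Omega$, and since $k_\Omega(\gamma(t),\rho\circ\gamma(t))\to0$ by Proposition \ref{prop:basic facts}(ii), the quotient $P_{\Omega,\,p}(\gamma(t))/P_{\Omega,\,p}(\rho\circ\gamma(t))\to1$. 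Thus $P_{\Omega,\,p}$ satisfies \eqref{asymptotic limits}.

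\emph{Step 2: $u\le P_{\Omega,\,p}$.} This follows at once from the generalized Phragm\'en--Lindel\"of description of $P_{\Omega,\,p}$ recorded in the proof of Theorem \ref{mainthm2}: a solution $u$ of \eqref{MA-bp} with property \eqref{asymptotic limits} is plurisubharmonic on $\Omega$, satisfies $\limsup_{z\to x}u(z)=0$ for every $x\in\partial\Omega\setminus\{p\}$, and satisfies $\liminf_{t\to1^-}|u(\gamma(t))(1-t)|={\rm Re}\,\frac{2}{\langle\gamma'(1),\nu_p\rangle}$ for every $\gamma\in\Gamma_p$ by \eqref{asymptotic limits}; hence $u$ belongs to the Perron family whose supremum is $P_{\Omega,\,p}$, whence $u\le P_{\Omega,\,p}$ on $\Omega$.

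\emph{Step 3: $u\ge P_{\Omega,\,p}$.} This is the delicate point, and I would follow the comparison argument of \cite[Theorem 7.1]{Bracci-Trans}, adapted to the present $C^3$ strongly linearly convex setting by means of the spherical representation $\Psi_p$ and Proposition \ref{prop:horo-preserving}. Arguing by contradiction, suppose $u(z_1)<P_{\Omega,\,p}(z_1)$ for some $z_1\in\Omega$ and fix $\varepsilon\in(0,1)$; the idea is to compare $u$ with the slightly enlarged maximal barrier $(1+\varepsilon)P_{\Omega,\,p}$. Since $u$ and $P_{\Omega,\,p}$ are continuous on $\overline\Omega\setminus\{p\}$ and vanish on $\partial\Omega\setminus\{p\}$, the open set $\{u<(1+\varepsilon)P_{\Omega,\,p}\}$ does not cluster at $\partial\Omega\setminus\{p\}$; intersecting it with the exhaustion $\Omega_\delta:=\{z\in\Omega:{\rm dist}(z,\partial\Omega)>\delta\}$ and invoking the Bedford--Taylor comparison principle together with the maximality of both $u$ and $(1+\varepsilon)P_{\Omega,\,p}$ on the resulting relatively compact domains, one forces the two functions to coincide there; letting $\delta\to0^+$ and then $\varepsilon\to0^+$ yields $u\ge P_{\Omega,\,p}$, a contradiction, so $u=P_{\Omega,\,p}$. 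The hard part will be to show that $\{u<(1+\varepsilon)P_{\Omega,\,p}\}$ also stays away from $p$, i.e.\ is contained in a fixed horosphere complement near $p$: the normalization \eqref{asymptotic limits} is only a \emph{non-tangential} statement, so to control $u$ relative to $P_{\Omega,\,p}$ along tangential approaches to $p$ one has to exploit the maximality of $u$ together with the horospheres of $\Omega$ (which are exactly the sub-level sets of $P_{\Omega,\,p}$) as barriers in \emph{every} approach region, reproducing the localization near $p$ carried out in \cite{Bracci-Trans}.
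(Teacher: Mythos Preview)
Your Steps 1 and 2 are correct. Step 3, however, is left genuinely incomplete: you yourself flag that the set $\{u<(1+\varepsilon)P_{\Omega,\,p}\}$ must be shown to stay away from $p$, and the non-tangential normalization \eqref{asymptotic limits} gives no direct control along tangential approaches. (The $\Omega_\delta$-exhaustion move, as written, also does nothing: on $\partial\Omega_\delta\cap\{u<(1+\varepsilon)P_{\Omega,\,p}\}$ the inequality is strict, so no comparison principle is triggered there.)

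The paper bypasses this difficulty entirely with a much shorter leaf-by-leaf argument. Invoking \cite[Proposition 11]{Lempert84} together with the proof of \cite[Proposition 7.4]{Bracci-Trans}, one shows that for every $v\in L_p$ the restriction $u\circ\varphi_v$ is a \emph{negative harmonic} function on $\Delta$ with boundary value $0$ on $\partial\Delta\setminus\{1\}$. The Herglotz representation then forces $u\circ\varphi_v=c_v P$ for some constant $c_v<0$, and applying \eqref{asymptotic limits} to the single curve $[0,1)\ni t\mapsto\varphi_v(t)\in\Gamma_p$ pins down $c_v=-1/\langle v,\nu_p\rangle^2$. By \eqref{slicePoisson} this is exactly $P_{\Omega,\,p}\circ\varphi_v$, and since the discs $\varphi_v(\Delta)$ cover $\Omega$, one obtains $u=P_{\Omega,\,p}$ directly, with no global comparison argument and no need to control tangential approaches to $p$. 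The key ingredient missing from your outline is the \emph{harmonicity} of $u$ on each geodesic leaf: subharmonicity is automatic from plurisubharmonicity, while the reverse inequality uses the maximality of $u$ together with the Lempert left inverse $\varrho_v$ to pull competitors on the disc back to plurisubharmonic competitors on $\Omega$.
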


\begin{proof}
Arguing exactly as in the proof of \cite[Theorem 5.1]{Bracci-Trans}, we can see that $P_{\Omega,\, p}$ is indeed a solution to equation \eqref{MA-bp} with the descried property as in \eqref{asymptotic limits}. To show the uniqueness, suppose that $u$ is another such solution. Then by combining \cite[Proposition 11]{Lempert84} with the proof of \cite[Proposition 7.4]{Bracci-Trans}, we conclude that for every $v\in L_p$,
$u\circ\varphi_v$ is a negative harmonic function on $\Delta$ with
\[
\lim_{\zeta\to \xi}u\circ\varphi_v(\zeta)=0
\]
for all $\xi\in\partial\Delta\setminus\{1\}$, where $\varphi_v$ denotes the preferred complex geodesic of $\Omega$ associated to $v$ (with base point $p$). Thus from the classical Herglotz representation theorem it follows that $u\circ\varphi_v=c_vP$ for some constant $c_v<0$. Here as usual, $P$ is the classical Poisson kernel on $\Delta$. By \eqref{asymptotic limits}, we see that
\[
c_v=-{\rm Re}\frac{1}{\langle\varphi_v'(1), \nu_p\rangle}=-\frac{1}{\langle v, \nu_p\rangle^2}.
\]
Now combining this with \eqref{slicePoisson} (with $e_1$ replaced by $\nu_p$) yields that $u=P_{\Omega,\, p}$. This concludes the proof.
\end{proof}

We now conclude this paper by the following
\begin{remark}\label{finalrmk}
Let $\Omega \subset\mathbb C^n\, (n>1)$ be a bounded strongly linearly convex domain with $C^3$-smooth boundary.
\begin{enumerate}[leftmargin=2.0pc, parsep=4pt]
\item [{\rm (i)}]
   If further $\Omega$ is strongly convex  and $\partial\Omega$ is $C^{\infty}$-smooth, then for every $p\in\partial\Omega$ our solution $P_{\Omega,\, p}$ to equation \eqref{MA-bp} constructed as above coincides with $\widetilde{P}_{\Omega,\, p}$, the one by Bracci-Patrizio in \cite{Bracci-MathAnn} (using the boundary
spherical representation of Chang-Hu-Lee \cite{Chang-Hu-Lee88}, which is generally different from ours). To see this, one may use Proposition \ref{prop:uniqueness-MA} and \cite[Corollary 5.3]{Bracci-Trans}. Another more direct way goes as follows: By \eqref{slicePoisson},
  $$
  P_{\Omega,\, p}\circ\varphi_v=-P/\langle v, \nu_p\rangle^2
  $$
for all $v\in L_p$, where $\varphi_v$ is the preferred complex geodesic of $\Omega$ associated to $v$ and $P$ is as in \eqref{Poissonkernel}. Also, we have
   $$
   \widetilde{P}_{\Omega,\, p}\circ\widetilde{\varphi}_v=-P/\langle v, \nu_p\rangle^2
   $$
(see, e.g., \cite[equality (1.2)]{Bracci-Trans}), where $\widetilde{\varphi}_v$ is the unique complex geodesic of $\Omega$ such that $\widetilde{\varphi}_v(1)=p$, $\widetilde{\varphi}_v'(1)=\langle v, \nu_p\rangle v$ and ${\rm Im}\langle\widetilde{\varphi}_v''(1), \nu_p\rangle=0$. Now once noticing that by Theorem \ref{mainthm1} each $\widetilde{\varphi}_v$ coincides with $\varphi_v$ after composing a parabolic automorphism of $\Delta$ fixing $1$, under which $P$ is invariant, the desired result follows immediately.

\item  [{\rm (ii)}]
Analogous to \cite[Theorem 6.1]{Bracci-Trans}, we also have
   $$
   P_{\Omega}(z, p)=-\frac{\partial g_{\Omega}}{\partial \nu_p}(z, p),
   \quad  (z, p)\in\Omega\times\partial\Omega,
   $$
   where $g_{\Omega}$ is the pluricomplex Green function of $\Omega$ (see \eqref{Greenkernel}).
   This follows easily from two different ways of expressing the Busemann function of $\Omega$ at $p\in\partial\Omega$:
   $$
   B_{\Omega,\, p}(z, z_0):=\lim_{w\to p}\big(k_{\Omega}(z, w)-k_{\Omega}(z_0, w)\big),\quad (z, z_0)\in\Omega\times\Omega.
   $$
   Indeed, by \cite[Theorem 2.6.47]{Abatebook} (which is  also valid for the strongly linearly convex case, as we explained at the very beginning of Subsection \ref{props-BSR}) we have
   $$
   B_{\Omega,\, p}(z, z_0)=\frac12\log\bigg(\frac{\partial g_{\Omega}}{\partial \nu_p}(z_0, p)\Big/\frac{\partial g_{\Omega}}{\partial \nu_p}(z, p)\bigg).
   $$
   On the other hand,  combining \eqref{eq:horo-preserving} with \eqref{pluri-Poissonkernel} yields that
   $$
   B_{\Omega,\, p}(z, z_0)=B_{\mathbb B^n,\, \nu_p}\big(\Psi_p(z), \Psi_p(z_0)\big)
   =\frac12\log\bigg(\frac{P_{\Omega}(z_0, p)}{P_{\Omega}(z, p)}\bigg).
   $$
   We then conclude that there exists a constant $C>0$, depending only on $p\in\partial\Omega$, such that
   $$
   P_{\Omega}(z, p)=-C\frac{\partial g_{\Omega}}{\partial \nu_p}(z, p),\quad
   (z, p)\in\Omega\times\partial\Omega.$$
   Now evaluating both sides at $z=\varphi_{\nu_p}(0)$ gives $C=1$, as desired.

\item  [{\rm (iii)}]
Very recently, Poletsky \cite{Poletsky20} introduced a sort of pluripotential compactification for a class of so-called locally uniformly pluri-Greenien complex manifolds, which includes bounded domains in
$\mathbb C^n$. He also proved using results in \cite{Bracci-Trans} that the boundary of the pluripotential
compactification of a bounded strongly convex  domain $\Omega$ in $\mathbb C^n\, (n>1)$ with $C^{\infty}$-smooth boundary is homeomorphic to the Euclidean boundary $\partial\Omega$; see
\cite[Example 7.3]{Poletsky20} for details. We remark here that a similar argument using results in this paper shows that the same result is also true when  $\Omega$ is only strongly linearly convex with $C^3$-smooth boundary.
\end{enumerate}
\end{remark}

\section*{Acknowledgements}
Part of this work was done while both authors were visiting Huzhou University in part of the summers of 2017 and 2018. Both authors would like to thank this institute for its hospitality during their visit. Most of this work was also carried out while the second author was a postdoctor at the Institute of Mathematics, AMSS, Chinese Academy of Sciences. He would like to express his deep gratitude to his mentor, Professor Xiangyu Zhou, for constant supports and encouragements. He would also like to thank Professors F. Bracci,
L. Lempert, and E. A. Poletsky for patiently answering his questions during his reading of their related work. Special thanks also go to Professor L. Lempert for providing the second author with a copy of his very
limitedly accessible work \cite{Lempert84}. Last but not least, both authors thank the anonymous referee for his/her reading of this paper.

\end{document}